\title{\textbf{Holomorphic Deformations of Compact Kähler Hyperbolic Manifolds}}
\author{ABDELOUAHAB KHELIFATI}
\date{}
\renewcommand{\thesubsection}{\arabic{section}.\arabic{subsection}}
\numberwithin{equation}{section}
\theoremstyle{plain}
\newtheorem{theorem}{Theorem}[section]
\newtheorem{dfn}[theorem]{Definition}
\newtheorem{thm}[theorem]{Theorem}
\newtheorem*{mthm}{Main Theorem}
\newtheorem{prop}[theorem]{Proposition}
\newtheorem{lem}[theorem]{Lemma}
\newtheorem{cor}[theorem]{Corollary}
\newtheorem{conj}[theorem]{Conjecture}
\theoremstyle{definition}
\newtheorem{qst}[theorem]{Question}
\newtheorem{rmk}[theorem]{Remark}
\newtheorem{ex}[theorem]{Examples}
\newtheorem*{pr1}{\textit{Proof of Theorem \ref{58}}}
\newcommand{\B}{\mathbb{B}}
\newcommand{\C}{\mathbb{C}}
\newcommand{\D}{\mathbb{D}}
\newcommand{\CP}{\mathbb{P}}
\newcommand{\R}{\mathbb{R}}
\newcommand{\Sp}{\mathbb{S}}
\newcommand{\N}{\mathbb{N}}
\newcommand{\Z}{\mathbb{Z}}
\begin{document}

\maketitle

\vspace{5ex}

\large{\textbf{Abstract.}} The goal of this paper is to study the deformations of compact Kähler hyperbolic manifolds. We will propose slightly modified versions of Kähler hyperbolicity as a tool to give a first step towards investigating the deformation openness of Gromov's classical notion of Kähler hyperbolicity.
\tableofcontents
\section{Introduction}
The first notion of complex hyperbolicity was introduced by S. Kobayashi in 1967. It can be seen as a way of generalizing the well-known Schwarz-Pick lemma in complex analysis to complex manifolds (or complex spaces) by defining a special pseudometric attached to the manifold in question, called the \textit{Kobayashi pseudometric}. When this pseudometric is a genuine metric, the manifold is said to be \textit{Kobayashi hyperbolic} \cite{kobayashi1967invariant}. In the compact setting, R. Brody showed in \cite{brody1978compact} that Kobayashi hyperbolicity is equivalent to the non-existence of non-constant entire curves. Subsequently, M. Gromov introduced in one of his seminal papers (\cite{gromov1991kahler}), a form of hyperbolicity on complex manifolds in terms of Kähler metrics. There, he defined the notion of \textit{Kähler hyperbolicity}, "probably" inspired by the property of \textit{openness at infinity} introduced by D. Sullivan in \cite{sullivan1976cycles} to give a condition for the existence of an invariant transversal measure on the leaves of a foliation on a compact complex manifold. Several notions of hyperbolicity have been developed recently by generalizing the non-existence of non-constant entire curves to ruling out non-degenerate holomorphic maps from some $\C^k$ (with $2\leq k\leq\dim_\C X-1$) into the given complex manifold $X$ with some growth condition, or by replacing the Kähler class with other special metrics. We can cite a few : \cite{marouani2023balanced}, \cite{marouani2023skt}, \cite{bei2024weakly}, \cite{ma2024strongly}, \cite{kasuya2025partially}, \cite{kasuya2025higher}.

Recall that a compact complex manifold $X$ is called :
\begin{itemize}
    \item\cite{gromov1991kahler}. \textbf{Kähler hyperbolic} if it admits a Kähler metric $\omega$ which is $\widetilde{d}$-bounded. 
    \item\cite{marouani2023skt}. \textbf{SKT hyperbolic} if it admits an SKT metric $\omega$ which is $(\widetilde{\partial+\bar\partial})$-bounded.
\end{itemize}

The main purpose of this article is to take a first step towards the study of the deformation openness of Kähler hyperbolicity by introducing two slightly modified versions. A first one called \textbf{Strong Kähler hyperbolicity}, this is an hyperbolicity notion pinched right between \textit{Real hyperbolicity} on Kähler manifolds and Gromov's notion of Kähler hyperbolicity. The second one is defined by weakening the boundedness condition of the $d$-potential of $\widetilde{\omega}$ to some $L^p$-condition, for $1\leq p\leq +\infty$, we shall call a manifold that admits such a metric \textbf{$L^p$-Kähler hyperbolic}. In a similar fashion, we will introduce another variant of SKT hyperbolicity, called \textbf{$L^p$-SKT hyperbolicity}.

We collect our deformation openness results in the following statement :
\begin{mthm}
Let $\sigma:\mathscr{X}\longrightarrow B$ be a holomorphic family of compact complex manifolds, where $B$ is an open ball around the origin in some $\C^N$. Let $X_t=\sigma^{-1}(t)$, for $t\in B$. 
\begin{enumerate}
    \item If $X_0$ is $L^p$-Kähler hyperbolic for some $1\leq p\leq+\infty$, then the fibers $X_t$ are $L^p$-SKT hyperbolic for all $t\in B$ sufficiently close to $0$.
    \item If $X_0$ is Kähler hyperbolic, then these fibers $X_t$ are $L^2$-Kähler hyperbolic. If moreover the hodge number $h^{2,0}(X_0)=0$, then the Kähler hyperbolicity property is \textbf{open} under small deformations.
    \item The Strong Kähler hyperbolicity property is \textbf{open} under deformations.
\end{enumerate}
\end{mthm}
\large{\textbf{Acknowledgments.}} This work is part of the author’s PhD thesis, and he would like to express his gratitude to his supervisor Dan Popovici for his constant guidance and support while carrying out this work. The author is also very grateful to Philippe Eyssidieux and Abdelghani Zeghib for their availability and willingness to answer his questions providing valuable insights throughout this work and to his co-office Louis Dailly for the useful exchanges they had about Kähler hyperbolicity and related topics. Many thanks are also due to Anna Fino for her feedback on the results of this paper and to Benoît Claudon for pointing out some interesting remarks on an earlier version of the manuscript and for sharing some insightful references.
\section{Some hyperbolicity notions on complex manifolds}
Throughout this paper, $X$ will be a compact connected complex manifold of dimension $\dim_\C X=n$ unless otherwise stated, and $\pi:\widetilde{X}\longrightarrow X$ its universal cover. In particular, if $\omega$ is some Hermitian metric on $X$, then $\widetilde{\omega}=\pi^*\omega$ is a \textit{complete} Hermitian metric on $\widetilde{X}$ since $\pi$ respects bidegrees by holomorphicity, and the pullback of a complete metric is also complete. Let us recall some basic definitions :

\begin{dfn} Let $X$ be a compact Kähler manifold. Then :
\begin{enumerate}
    \item\cite{li2019kahler} A Kähler metric $\omega$ on $X$ is said to be \textbf{Kähler $\widetilde{d}$-exact} if $\widetilde{\omega}$ is exact, i.e. there exists a $($globally defined$)$ smooth 1-form $\eta$ on $\widetilde{X}$ such that $\widetilde{\omega}=d\eta$. If such a metric exists on $X$, we say that $X$ is a \textbf{Kähler $\widetilde{d}$-exact} manifold.
    \item\cite{gromov1991kahler} If moreover the $d$-potential $\eta$ in $(1)$ can be chosen to be $\widetilde{\omega}$-bounded $($and not necessarily smooth$)$, then $\omega$ is said to be $\widetilde{d}$-\textbf{bounded}.  If such a metric exists on $X$, we say that $X$ is \textbf{Kähler hyperbolic}.
\end{enumerate}  
\end{dfn}
\begin{rmk}
\begin{enumerate}
    \item One can drop the smoothness assumption on the $d$-potential of $\widetilde{\omega}$ in the Kähler $\widetilde{d}$-exactness defintion. Indeed, suppose that $X$ admits a Kähler metric $\omega$ such that $\widetilde{\omega}=d\eta$, where $\eta$ is a 1-from which is not necessarily smooth on $\widetilde{X}$. This means that $[\widetilde{\omega}]=0\in H^2_{cur}(\widetilde{X},\R)$, where :
    \begin{equation*}
        H^2_{cur}(\widetilde{X},\R)=\dfrac{\{\text{real valued closed 2-currents on $X$}\}}{\{\text{real valued exact 2-currents on $X$}\}}
    \end{equation*}
    But since for every degree $k$ we have (Théorème 14, Chapitre IV in \cite{de1955varietes}) : 
    \begin{equation*}
        H^k_{cur}(\widetilde{X},\R)\simeq H^k_{DR}(\widetilde{X},\R)
    \end{equation*}
    we get : $[\widetilde{\omega}]=0\in H^2_{DR}(\widetilde{X},\R)$, which means that $\omega$ is Kähler $\widetilde{d}$-exact and $\widetilde{\omega}$ admits a smooth $d$-potential. Moreover, we can recover a smooth bounded $d$-potential of $\widetilde{\omega}$ in the Kähler hyperbolicity definition using the \textbf{heat equation method} (See the end of the proof of Theorem 2.1 in \cite{chen2021euler}, p.11-p.12). 
    \item Clearly Kähler hyperbolicity implies Kähler $\widetilde{d}$-exactness. Note that these two notions depend only on the cohomology class of the Kähler form $\omega$, in the following sense : if one representative of $\{\omega\}\in H_{DR}^2(X)$ is $\widetilde{d}$-exact (resp. $\widetilde{d}$-bounded), then all representatives are $\widetilde{d}$-exact (resp. $\widetilde{d}$-bounded) $($\cite{chen2018compact}, Lemma 2.4$)$. 
\end{enumerate} \label{26}
\end{rmk}
This leads to the following definition :
\begin{dfn}\cite{brunnbauer2024atoroidal} A cohomology class $\{\alpha\}\in H^k_{DR}(X,\R)$ is \textbf{hyperbolic} if $\alpha$ is \\ $\widetilde{d}$-bounded. We denote by $V^k_{hyp}(X)\subset H^k_{DR}(X,\R)$ the space of all hyperbolic classes. 
\end{dfn}
\begin{rmk}
    In the case $k=2$, the subspace $V^2_{hyp}(X)\subset H^2_{DR}(X,\R)$ depends only on the fundamental group of $X$ (see Theorem 5.1 in \cite{kedra2009symplectically}, Theorem 2.4 in \cite{brunnbauer2024atoroidal} or Proposition A.3 in \cite{bei2025geometric} for an alternative proof). This fact has been pointed out to the author by B. Claudon.  \label{71}
\end{rmk}
Let $\mathcal{K}_X\subset H^{1,1}(X,\R):=H^{1,1}_{BC}(X,\C)\cap H^2_{DR}(X,\R)$ be the \textit{Kähler cone} of $X$, namely, the set of all Kähler classes on $X$. Then :
\begin{enumerate}
    \item $X$ is Kähler $\widetilde{d}$-exact $\Longleftrightarrow$ $\left\{\{\omega\}\in\mathcal{K}_X\hspace{2pt}:\hspace{2pt}\widetilde{\omega}\in d\left(\mathscr{C}^\infty\left(\widetilde{X},T^*_{\widetilde{X}}\right)\right)\right\}\neq\emptyset$.
    \item $X$ is Kähler hyperbolic $\Longleftrightarrow$ $\mathcal{K}_X\cap V^2_{hyp}(X)\neq\emptyset$.
\end{enumerate} 
The basic examples of Kähler hyperbolic manifolds are Kähler manifolds homotopic to negatively curved Riemannian manifolds \cite{chen2018compact} and submanifolds of Hermitian locally symmetric manifolds of non-compact manifolds. Kähler $\widetilde{d}$-exact manifolds include other examples, such as Kähler non-elliptic manifolds; these are Kähler manifolds that admit Kähler metrics $\omega$ such that $\widetilde{\omega}=d\eta$ where $\eta$ is a $1$-form with \textbf{sublinear growth}, i.e.
\begin{equation}
    \lvert\eta\rvert_{\widetilde{\omega}}(x)\leq C(1+\hspace{1pt}d_{\widetilde{\omega}}(x,x_0))\hspace{4pt},\qquad\quad\forall x\in\widetilde{X}
    \label{16}
\end{equation}
for some constant $C>0$, where $d_{\widetilde{\omega}}$ is the distance induced by $\widetilde{\omega}$ and $x_0\in\widetilde{X}$ is some fixed base point (see \cite{jost1998vanishing}).
\begin{dfn}
Let $X$ be a complex manifold (not necessarily compact), then it is said to be \textbf{Kobayashi hyperbolic} if the "Kobayashi pseudo-distance" $($as defined in \cite{kobayashi1967intrinsic}$)$ is a genuine distance on $X$.
\end{dfn}
R. Brody  gave a characterization of Kobayashi in the compact setting \cite{brody1978compact}, answering a couple of questions raised by S. Kobayashi in \cite{kobayashi1974some} and \cite{kobayashi2005hyperbolic}. Namely :
\begin{thm} Let $X$ be a compact complex manifold. Then :
\begin{center}
    X is Kobayashi hyperbolic $\Longleftrightarrow$ Every entire curve in $X$ is constant
\end{center}
\end{thm}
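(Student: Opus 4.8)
The plan is to prove the two implications separately: the forward one is elementary and uses only the functorial (distance-decreasing) properties of the Kobayashi pseudo-distance, while the reverse one is the substantial part and rests on Brody's reparametrization lemma.

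For ``$\Longrightarrow$'' I would use that any holomorphic $f:\C\to X$ satisfies $d_X(f(z),f(w))\le d_\C(z,w)$ for all $z,w$, where $d_X$ is the Kobayashi pseudo-distance. The key observation is that $d_\C\equiv 0$: for $R>\max(|z|,|w|)$ the dilation $\zeta\mapsto R\zeta$ maps $\D$ into $\C$ and sends $z/R,w/R$ to $z,w$, so $d_\C(z,w)\le d_\D(z/R,w/R)\to 0$ as $R\to\infty$ by continuity of the Poincaré distance at the origin. Hence $d_X(f(z),f(w))=0$; since $X$ is Kobayashi hyperbolic, $d_X$ is a genuine distance, so $f$ is constant. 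This direction uses neither compactness nor any metric on $X$.

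For ``$\Longleftarrow$'' I would argue by contraposition, assuming $X$ is not Kobayashi hyperbolic and producing a non-constant entire curve. Fix, using compactness, a Hermitian metric $\omega$ on $X$. By Royden's theorem the Kobayashi pseudo-distance on the compact manifold $X$ is the integrated form of the infinitesimal Kobayashi--Royden pseudometric $k_X$, and hyperbolicity of $X$ is equivalent to the existence of $c>0$ with $k_X(v)\ge c\,|v|_\omega$ for all $v\in TX$; since this fails, there are unit vectors $v_n\in T_{x_n}X$ with $k_X(v_n)\to 0$, hence holomorphic discs $f_n:\D\to X$ with $f_n(0)=x_n$ and $|f_n'(0)|_\omega\to\infty$. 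The heart of the matter is then Brody's reparametrization lemma. After restricting each $f_n$ to a slightly smaller disc so it is continuous up to the boundary, the function $u_n(z)=(1-|z|^2)|f_n'(z)|_\omega$ attains its maximum $a_n\ge|f_n'(0)|_\omega\to\infty$ at an interior point $z_n$; composing $f_n$ with the automorphism $\psi_{z_n}(w)=\frac{w+z_n}{1+\bar z_n w}$ and then with the dilation $w\mapsto w/a_n$ produces maps $g_n:\D_{a_n}\to X$ with $|g_n'(0)|_\omega=1$ and, via the identity $1-|\psi_{z_n}(\zeta)|^2=|\psi_{z_n}'(\zeta)|\,(1-|\zeta|^2)$ together with $u_n\le a_n$, the uniform bound $|g_n'(w)|_\omega\le a_n^2/(a_n^2-|w|^2)$ on $\D_{a_n}$. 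On any fixed disc $\D_r$ this is eventually arbitrarily close to $1$, so $\{g_n\}$ is equicontinuous there; since $X$ is compact, a diagonal extraction over an exhaustion of $\C$ by discs gives a subsequence converging in $C^0_{loc}$, hence in $C^\infty_{loc}$ by the Cauchy estimates, to a holomorphic $g:\C\to X$ with $|g'|_\omega\le 1$ and $|g'(0)|_\omega=1$. In particular $g$ is non-constant, completing the contrapositive.

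The main obstacle is the reparametrization lemma itself: one must choose the automorphisms $\psi_{z_n}$ so that the rescaled derivatives are \emph{uniformly} bounded — this is exactly where the Schwarz--Pick/Möbius geometry of the disc enters, through the displayed identity — so that a normal-families argument applies, and one must then verify that the normalization $|g_n'(0)|_\omega=1$ passes to the limit, which is what forces $g$ to be non-constant. Compactness of $X$ is used twice: to fix a global reference metric $\omega$ with respect to which ``derivative blow-up'' is meaningful, and to ensure the equicontinuous family $\{g_n\}$ is genuinely relatively compact, i.e. does not escape to infinity.
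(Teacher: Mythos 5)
The paper does not prove this statement: it is quoted as Brody's theorem with a reference to \cite{brody1978compact}, so there is no in-paper argument to compare against. Your proposal is a correct and complete rendition of the standard proof. The forward implication (distance-decreasing property of the Kobayashi pseudo-distance plus $d_{\C}\equiv 0$) is exactly right and, as you note, needs neither compactness nor a metric. The reverse implication is essentially Brody's original argument; the one place where you deviate from the most economical route is in invoking Royden's theorem (that $d_X$ is the integral of the infinitesimal Kobayashi--Royden pseudometric) to produce the sequence of discs $f_n:\D\to X$ with $|f_n'(0)|_\omega\to\infty$. That is a legitimate and common textbook shortcut, but it imports a nontrivial result (upper semicontinuity of $k_X$, etc.); Brody's own argument extracts the blowing-up discs directly from a degenerate chain of discs witnessing $d_X(p,q)=0$ for $p\neq q$, at the cost of a slightly more delicate selection argument. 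The reparametrization step itself — the maximum of $(1-|z|^2)|f_n'(z)|_\omega$, the M\"obius identity $1-|\psi_{z_n}(\zeta)|^2=|\psi_{z_n}'(\zeta)|(1-|\zeta|^2)$, the bound $|g_n'(w)|_\omega\le a_n^2/(a_n^2-|w|^2)$, and the passage of the normalization $|g_n'(0)|_\omega=1$ to the limit via $C^\infty_{loc}$ convergence — is stated correctly, and you correctly identify the two uses of compactness.
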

The latter property has come to be called \textbf{Brody hyperbolicity}.

Recently, generalized notions of hyperbolicity were introduced in the non-Kähler context. Here, we recall the definition of some special metrics and their associated hyperbolicity notions. 
\begin{dfn} Let $X$ be a compact complex manifold of dimension $\dim_\C X=n$. \\ A Hermitian metric $\omega$ on $X$ is called :
\begin{enumerate} 
    \item \textbf{Strong Kähler with torsion} $($\textbf{SKT}$)$ or \textbf{pluriclosed} if $\partial\bar\partial\omega=0$.
    \item \cite{gauduchon1977theoreme} \textbf{Gauduchon} if $\partial\bar\partial\omega^{n-1}=0$
    \item $($\cite{gauduchon1977fibres},\cite{michelsohn1982existence}$)$ \textbf{Semi-Kähler} or \textbf{Balanced} if $d\omega^{n-1}=0$.
    \item \cite{popovici2013deformation} \textbf{Strongly Gauduchon} $($\textbf{sG}$)$ if $\partial\omega^{n-1}$ is $\bar\partial$-exact.
\end{enumerate}
\end{dfn}
\begin{dfn} Let $X$ be a compact complex manifold of dimension $\dim_\C X=n$. We say that $X$ is :
\begin{enumerate}
    \item $($\cite{marouani2023balanced},\cite{marouani2022some}$)$ \textbf{Balanced hyperbolic} if there exists a balanced metric $\omega$ on $X$ such that $\omega^{n-1}$ is $\widetilde{d}$-bounded.
    \item $($\cite{marouani2023skt}$)$ \textbf{SKT hyperbolic} if it admits an SKT metric $\omega$ which is \textbf{$(\widetilde{\partial+\bar\partial})$-bounded}, i.e. $\widetilde{\omega}=\partial\alpha+\bar\partial\beta$, where $\alpha$ and $\beta$ are bounded $(0,1)$ and respectively $(1,0)$-forms on $\widetilde{X}$. 
    \item $($\cite{marouani2023skt} \textbf{Gauduchon hyperbolic} if there exists a Gauduchon metric $\omega$ on $X$ such that $\widetilde{\omega}^{n-1}=\partial\alpha+\bar\partial\beta$, where $\alpha$ and $\beta$ are bounded $(n-2,n-1)$ and respectively $(n-1,n-2)$-forms on $\widetilde{X}$; that is to say that $\omega^{n-1}$ is $(\widetilde{\partial+\bar\partial})$-bounded.
    \item $($\cite{ma2024strongly}$)$ \textbf{sG hyperbolic} if there exists an sG metric $\omega$ on $X$ such that $\omega^{n-1}$ which is the $(n-1,n-1)$-component of a real $d$-closed $(2n-2)$-form $\Omega$ with $\pi^*\Omega=\eta$, where $\eta$ is a bounded $(2n-3)$-form on $\widetilde{X}$.
\end{enumerate}
\end{dfn}

M. Gromov mentioned in his paper \cite{gromov1991kahler} that Kähler hyperbolic manifolds are Kobayashi hyperbolic. The same result holds for SKT hyperbolic manifolds (\cite{marouani2023skt}, Theorem 3.5).

We will now recall some of the basic properties of Kähler $\widetilde{d}$-exact and Kähler hyperbolic manifolds.
\begin{prop}
    Let $X$ be a Kähler $\widetilde{d}$-exact manifold. Then there is no compact complex analytic subset $Z$ of $\widetilde{X}$ of dimension $k=\dim_\C Z>0$. In particular, $X$ admits no simply connected complex submanifold $Y$ $($such as rational curves$)$. \label{7}
\end{prop}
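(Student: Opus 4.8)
The plan is to run Gromov's volume/cohomology obstruction. Fix a Kähler metric $\omega$ on $X$ with $\widetilde\omega=\pi^*\omega=d\eta$ for some \emph{smooth} $1$-form $\eta$ on $\widetilde X$ (this is exactly the hypothesis of Kähler $\widetilde d$-exactness; cf. also Remark \ref{26}), and suppose for contradiction that $Z\subset\widetilde X$ is a compact complex analytic subset with $k:=\dim_\C Z>0$. The first point is purely positivity: since $\widetilde\omega$ is a Kähler form, its restriction to the regular locus $Z_{\mathrm{reg}}$ is a Kähler form on the $k$-dimensional complex manifold $Z_{\mathrm{reg}}$, so $\widetilde\omega^{\,k}|_{Z_{\mathrm{reg}}}$ is $k!$ times the associated volume form and
\begin{equation*}
\int_Z\widetilde\omega^{\,k}=\int_{Z_{\mathrm{reg}}}\widetilde\omega^{\,k}>0,
\end{equation*}
the integral being finite by the standard theory of integration over analytic sets.

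On the other hand, $\widetilde\omega$ is $d$-closed, so
\begin{equation*}
\widetilde\omega^{\,k}=(d\eta)\wedge\widetilde\omega^{\,k-1}=d\!\left(\eta\wedge\widetilde\omega^{\,k-1}\right),
\end{equation*}
which is exact. To conclude by Stokes I would deal with the possible singularities of $Z$ in one of two equivalent ways: either invoke the fact that Stokes' formula holds for smooth forms on a compact complex analytic set, the singular locus having real codimension $\geq 2$ and therefore contributing no boundary term; or pass to a resolution of singularities $\mu:\widehat Z\to Z$, a compact complex manifold without boundary, on which $\mu^*\widetilde\omega$ is smooth and closed with $(\mu^*\widetilde\omega)^k=d\bigl(\mu^*\eta\wedge(\mu^*\widetilde\omega)^{k-1}\bigr)$, and observe that since $\mu$ restricts to a biholomorphism over the dense open set $Z_{\mathrm{reg}}$ and $Z\setminus Z_{\mathrm{reg}}$ is Lebesgue-null, $\int_{\widehat Z}(\mu^*\widetilde\omega)^k=\int_Z\widetilde\omega^{\,k}$. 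Either way, Stokes' theorem on a compact manifold without boundary gives $\int_Z\widetilde\omega^{\,k}=0$, contradicting the previous display.

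For the last assertion, let $Y$ be a simply connected compact complex submanifold of $X$ with $\dim_\C Y>0$ (for instance $Y\cong\CP^1$, a rational curve). Since $\pi_1(Y)=1$, the inclusion $\iota:Y\hookrightarrow X$ satisfies $\iota_*\pi_1(Y)=\{1\}\subset\pi_*\pi_1(\widetilde X)=\{1\}$, so by the lifting criterion for covering spaces $\iota$ lifts to a map $\widetilde\iota:Y\to\widetilde X$ with $\pi\circ\widetilde\iota=\iota$, and $\widetilde\iota$ is holomorphic because $\pi$ is a local biholomorphism. As $\iota$ is an embedding, so is $\widetilde\iota$, whence $\widetilde\iota(Y)$ is a compact complex (smooth) analytic subset of $\widetilde X$ of dimension $\dim_\C Y>0$, contradicting the first part of the proposition.

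The only genuinely delicate point is the justification of Stokes' theorem on a singular $Z$; everything else is formal. I expect to settle it via the resolution argument above, which reduces the matter to the classical Stokes theorem on the smooth compact manifold $\widehat Z$ together with the fact that integration of a bounded form over an analytic set ignores the null set of singular points.
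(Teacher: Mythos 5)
Your proposal is correct and follows essentially the same route as the paper: exhibit $\widetilde\omega^{\,k}=d(\eta\wedge\widetilde\omega^{\,k-1})$ as exact, apply Stokes on the compact set $Z$ to force $\mathrm{Vol}_{\widetilde\omega}(Z)=0$, contradict positivity, and then lift a simply connected submanifold to $\widetilde X$ via the covering criterion. Your extra care with the singular locus of $Z$ (via resolution or the Lelong--Stokes theorem for analytic sets) is a detail the paper glosses over but does not change the argument.
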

\begin{proof}
    Suppose that such a $Z$ exists, then :
    \begin{equation}
        \text{Vol}_{\widetilde{\omega}}(Z)=\dfrac{1}{k!}\int_Z\widetilde{\omega}^k=\dfrac{1}{k!}\int_Z d(\eta\wedge\widetilde{\omega}^{k-1})\overset{(*)}{=}0
    \end{equation}
    where $(*)$ is due to the compactness of $Z$. This is a contradiction since $\text{Vol}_{\widetilde{\omega}}(Z)>0$ by the positivity of the dimension. Now let $f:Y\hookrightarrow X$ be a simply connected submanifold of $X$, then $f$ lifts to a map $\widetilde{f}:Y\hookrightarrow\widetilde{X}$. So similarly, one gets $f^*\omega=\widetilde{f}^*\widetilde{\omega}^*=0$, hence $f$ is constant, which is again a contradiction.
\end{proof}
This implies by a result of S. Mori in \cite{mori1979projective}, that a \textit{projective} Kähler $\widetilde{d}$-exact manifold has a nef canonical bundle, due to the absence of rational curves. A rather interesting question was asked by Gromov about the ampleness of the canonical bundle $K_X$ of a Kähler hyperbolic manifold $X$. This question has been answered positively by B.-L. Chen and X. Yang :
\begin{prop}[\cite{chen2018compact}, Theorem 2.11.] If $X$ is a compact Kähler hyperbolic manifold, then its canonical bundle $K_X$ is ample.
\end{prop}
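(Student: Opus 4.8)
The plan is to combine the absence of rational curves (Proposition \ref{7}) with Gromov's $L^2$-Hodge theory on $\widetilde X$ and a Nakai--Moishezon-type ampleness criterion. \emph{Step 1: show $X$ is projective with $K_X$ nef and big.} Any rational curve $\CP^1\hookrightarrow X$ would lift to $\widetilde X$ (since $\CP^1$ is simply connected), contradicting Proposition \ref{7}, so $X$ contains no rational curves. Gromov's spectral-gap argument for the $\widetilde d$-bounded metric $\widetilde\omega$ concentrates the reduced $L^2$-Dolbeault cohomology of $\widetilde X$ in the bidegrees with $p+q=n$; combining this, after a twisted refinement for the bundles $\pi^*K_X^{\otimes m}$, with Atiyah's $L^2$-index theorem, yields that $h^0(X,mK_X)$ grows like $m^n$, i.e. $X$ is of general type and $K_X$ is big. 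Then $X$ is Moishezon, hence (being Kähler) projective, so Mori's cone theorem applies, and since there are no rational curves every extremal ray of $\overline{\mathrm{NE}}(X)$ lies in $\{K_X\geq 0\}$; thus $K_X$ is nef.

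\emph{Step 2: upgrade nef-and-big to ample.} By Nakamaye's theorem the augmented base locus of the nef and big bundle $K_X$ equals the null locus $\bigcup\{Z:(K_X|_Z)^{\dim Z}=0\}$, so it suffices to prove $\int_Z c_1(K_X)^{\dim Z}>0$ for every positive-dimensional irreducible $Z\subseteq X$. The crucial input is that Kähler hyperbolicity is inherited by submanifolds: a submanifold $\iota\colon Y\hookrightarrow X$ lifts to $\tilde\iota\colon\widetilde Y\to\widetilde X$, and $\tilde\iota^*\widetilde\omega=\pi_Y^*(\iota^*\omega)$ is Kähler on $\widetilde Y$ with bounded potential $\tilde\iota^*\eta$ (pointwise $|\tilde\iota^*\eta|_{\tilde\iota^*\widetilde\omega}\leq(|\eta|_{\widetilde\omega})\circ\tilde\iota\leq C$); for singular $Z$ one resolves $\mu\colon\hat Z\to Z$ and pulls the bounded potential back through the lifted map, so that $\hat Z$ carries a $\widetilde d$-bounded \emph{semipositive} $(1,1)$-form which is positive off the exceptional set. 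Running the Step~1 argument on $\hat Z$ with the nef bundle $\mu^*(K_X|_Z)$ gives $(K_X|_Z)^{\dim Z}=(\mu^*(K_X|_Z))^{\dim Z}>0$. With all these intersection numbers strictly positive, the Demailly--P\u{a}un description of the Kähler cone places $c_1(K_X)$ inside $\mathcal{K}_X$; hence $K_X$ is a Kähler class and, by Kodaira, ample.

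\emph{Main obstacle.} The hard part is extracting the \emph{strict} positivity of the self-intersections from the mere concentration of $L^2$-cohomology, i.e. upgrading Gromov's qualitative vanishing to the quantitative growth $h^0\sim m^{\dim}$, and making this survive on the resolutions $\hat Z$, which are only Kähler $\widetilde d$-semipositive rather than honestly Kähler hyperbolic, and for the restricted bundle $\mu^*(K_X|_Z)$ rather than for $K_{\hat Z}$ (so that adjunction and discrepancy corrections are not directly available). If one argues instead through the basepoint-free theorem, forming the canonical morphism $\phi\colon X\to X_{\mathrm{can}}$ with $mK_X=\phi^*\mathcal{O}(1)$, the same difficulty resurfaces as the need to exclude positive-dimensional fibers $F$ of $\phi$: such an $F$ is a subvariety with $K_X|_F$ trivial, and ruling it out calls for an induction on dimension — $F$ is Kähler hyperbolic with $\det N_{F/X}\cong K_F$ ample by the inductive hypothesis, while the Kobayashi (hence Brody) hyperbolicity inherited from $X$ already forbids the rational and elliptic curves $F$ would otherwise contain.
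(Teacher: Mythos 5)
Your Step 1 (bigness of $K_X$ via Gromov's $L^2$-index argument, projectivity, and nefness from the absence of rational curves plus Mori's cone theorem) is exactly the route the paper indicates: it simply combines the bigness of $K_X$ (\cite{gromov1991kahler}, Corollary 0.4C) with the cone theorems of \cite{mori1982threefolds} and \cite{kawamata1991length}, together with Proposition \ref{7}. The genuine gap is in your Step 2, and you have in fact diagnosed it yourself in the ``main obstacle'' paragraph: running the $L^2$-index argument on a resolution $\hat Z$ of a subvariety $Z$ can only produce information about $K_{\hat Z}$, not about $\mu^*(K_X|_Z)$, and the pulled-back $(1,1)$-form on $\hat Z$ is merely semipositive, so neither the spectral gap nor the index computation applies; the Nakai--Moishezon positivity $(K_X|_Z)^{\dim Z}>0$ is therefore never established. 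The inductive variant via the basepoint-free theorem has the same hole: a positive-dimensional fiber $F$ of the canonical morphism need not be smooth, is not shown to be Kähler hyperbolic in any usable sense, and Brody hyperbolicity only excludes curves, not higher-dimensional $K_X$-trivial fibers.

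The missing step is the standard klt cone-theorem argument, which is precisely what the citation of \cite{kawamata1991length} is for. Since $K_X$ is big, Kodaira's lemma gives $mK_X\sim A+E$ with $A$ ample and $E$ effective. If $K_X$ were nef but not ample, Kleiman's criterion would produce a nonzero class $\alpha\in\overline{\mathrm{NE}}(X)$ with $K_X\cdot\alpha=0$, hence $E\cdot\alpha=-\frac{1}{1}A\cdot\alpha<0$ up to the factor $m$, and so $(K_X+\delta E)\cdot\alpha<0$ for every $\delta>0$. Choosing $\delta$ small enough that the pair $(X,\delta E)$ is klt, the cone theorem for this pair writes $\overline{\mathrm{NE}}(X)$ as its $(K_X+\delta E)$-nonnegative part plus countably many extremal rays generated by rational curves (of bounded degree by Kawamata's length estimate); since $\alpha$ does not lie in the nonnegative part, at least one such rational curve must exist, contradicting Proposition \ref{7}. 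This replaces your entire Step 2 and avoids any $L^2$-theory on subvarieties or resolutions.
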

This has been done through combining the fact that $K_X$ is big (\cite{gromov1991kahler}, Corollary 0.4C) and cone theorems from \cite{mori1982threefolds} and \cite{kawamata1991length}.
Hence, this gave a partial answer to a conjecture proposed by S. Kobayashi :
\begin{conj}$($\textbf{Kobayashi '70}$)$ Let X be a compact Kähler $($or projective$)$ manifold which is Kobayashi hyperbolic. Then $K_X$ is ample. \label{63}
\end{conj}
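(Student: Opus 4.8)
This is the classical Kobayashi conjecture, which remains open in general; the plan below is the strategy that does succeed under extra positivity --- in particular for Kähler hyperbolic $X$, where it is carried out by Chen--Yang as recalled above --- together with an indication of the precise step that stalls. The plan has four parts: (a) use hyperbolicity to rule out rational curves on $X$; (b) deduce that $K_X$ is nef; (c) prove that $K_X$ is big; (d) upgrade ``nef, big, and no rational curves'' to ``$K_X$ ample''.

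For (a), I would invoke Brody's theorem (recalled above): a compact Kobayashi hyperbolic manifold carries no non-constant entire curve. If $X$ contained a rational curve, i.e. a non-constant morphism $\CP^1\to X$, then restricting it to $\C=\CP^1\setminus\{\infty\}$ would produce such an entire curve; hence $X$ has no rational curves. For (b), assume first that $X$ is projective. Mori's cone theorem says every $K_X$-negative extremal ray of $\overline{NE}(X)$ is spanned by the class of a rational curve, so the absence of rational curves forces $\overline{NE}(X)\subset\{K_X\geq 0\}$, i.e. $K_X$ is nef. (If $X$ is only Kähler one would instead appeal to the Kähler analogues of the cone and contraction theorems; in any case, once (c) is in place the bigness of $K_X$ makes $X$ Moishezon, hence projective since $X$ is Kähler, so projectivity may be assumed from the outset.)

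Step (c) is the crux and the step I expect to be the real obstacle. When $X$ is Kähler hyperbolic one has a $\widetilde d$-bounded Kähler metric $\omega$, and Gromov's $L^2$-Hodge theory on the complete Kähler universal cover $\widetilde X$ applies: writing $\widetilde\omega=d\eta$ with $\eta$ bounded, the identity $\widetilde\omega\wedge\alpha=d(\eta\wedge\alpha)$ for closed $L^2$ forms $\alpha$ shows the Lefschetz operator annihilates reduced $L^2$-cohomology away from the middle degree; combined with $L^2$ hard Lefschetz and Atiyah's $L^2$-index theorem this gives $(-1)^n\chi(X)=\dim_\Gamma\mathcal H^{n}_{(2)}(\widetilde X)>0$, and refining the computation on the Dolbeault side produces an abundance of $L^2$ holomorphic $n$-forms on $\widetilde X$, which descends to the growth estimate $h^0(X,mK_X)\gtrsim m^n$, i.e. $K_X$ is big (Gromov, Corollary~0.4C). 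For a general Kobayashi hyperbolic manifold this machinery is simply not available: showing that $X$ is of general type is itself a deep open problem, a facet of the Green--Griffiths--Lang conjecture, established only in special situations (closed subvarieties of abelian varieties, by Bloch, Ochiai and Kawamata; very general hypersurfaces of large degree; and so on). This is precisely why the conjecture is still open beyond such cases.

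Finally, for (d), once $K_X$ is nef and big the base-point-free theorem of Kawamata--Shokurov makes $K_X$ semi-ample, giving a birational morphism $\phi\colon X\to X_{\mathrm{can}}$ contracting exactly the irreducible curves $C$ with $K_X\cdot C=0$; if such a curve existed, the bounds on the lengths of extremal rays due to Mori and Kawamata would force a rational curve into the exceptional locus, contradicting (a). Hence $K_X$ is ample. In short, (a), (b) and (d) are within reach of established techniques, and the whole difficulty of Kobayashi's conjecture is concentrated in (c) --- the bigness of $K_X$ --- which is exactly what the $L^2$-machinery behind Kähler hyperbolicity delivers and what is missing in the purely Kobayashi hyperbolic world.
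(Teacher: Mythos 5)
This statement is a conjecture in the paper, not a theorem: the paper gives no proof of it and only records (just above it) that the special case of Kähler hyperbolic manifolds was settled by Chen--Yang by combining Gromov's bigness of $K_X$ with the cone theorems of Mori and Kawamata. Your proposal correctly identifies the statement as open, honestly locates the obstruction in the bigness step, and your outline for the Kähler hyperbolic case (no rational curves, nefness via the cone theorem, bigness via Gromov's $L^2$-Hodge theory, then ampleness) is exactly the route the paper attributes to Chen--Yang, so there is nothing to object to.
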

A result that can be deduced from Proposition \ref{7} is the fact that Kähler $\widetilde{d}$-exact manifolds have \textit{generically large} fundamental group in the sense of Kollár (Definition 4.6 in \cite{kollar1995shafarevich}). Namely, we have :
\begin{cor}
    Let $X$ be a Kähler $\widetilde{d}$-exact manifold and $\iota:Z\hookrightarrow X$ an irreducible compact analytic subvariety of dimension $k=\dim_\C Z>0$. Then the image of $\iota_*:\pi_1(Z)\longrightarrow\pi_1(X)$ is infinite. In particular, $\pi_1(X)$ is infinite. \label{13}
\end{cor}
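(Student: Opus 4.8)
\emph{Proof proposal.} The plan is to argue by contradiction and produce a positive-dimensional compact analytic subset of $\widetilde{X}$, which is forbidden by Proposition \ref{7}. So suppose the subgroup $G:=\mathrm{im}\bigl(\iota_*:\pi_1(Z)\to\pi_1(X)\bigr)$ is finite; recall that $Z$, being irreducible, is connected, path-connected and locally contractible, so ordinary covering-space theory applies to it.

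First I would restrict the universal covering $\pi:\widetilde{X}\to X$ over $Z$. Since a covering map restricts to a covering map over any subspace, $\pi|_{\pi^{-1}(Z)}:\pi^{-1}(Z)\to Z$ is a covering; moreover $\pi^{-1}(Z)$ is an analytic subset of $\widetilde{X}$ of pure dimension $k$. Next I would read off the monodromy: the fibre over a base point $z_0\in Z$ is $\pi^{-1}(z_0)$, on which $\pi_1(X)$ acts freely (deck transformations of the universal cover), and the monodromy action of $\pi_1(Z,z_0)$ factors through $\iota_*$ followed by this free action. Hence the monodromy orbit of any $\widetilde{z}_0\in\pi^{-1}(z_0)$ has exactly $|G|$ points, so the connected component $\widetilde{Z}$ of $\pi^{-1}(Z)$ through $\widetilde{z}_0$ is the covering of $Z$ associated with $\ker\iota_*$ (the stabilizer $\iota_*^{-1}(\{1\})$), and it has $[\pi_1(Z):\ker\iota_*]=|G|$ sheets.

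Now $\widetilde{Z}$ is a connected component of the analytic set $\pi^{-1}(Z)$, hence is itself a closed analytic subset of $\widetilde{X}$ of dimension $k>0$; and because $G$ is assumed finite, $\widetilde{Z}\to Z$ is a finite covering of a compact space, so $\widetilde{Z}$ is compact. This contradicts Proposition \ref{7} (whose proof, via Stokes applied to the $d$-closed current of integration over an analytic set, is valid verbatim for the possibly singular $\widetilde{Z}$, the singular locus being negligible for integration). Therefore $G$ is infinite. The last assertion is then the special case $Z=X$, $\iota=\mathrm{id}_X$ — equivalently, it follows at once from $\pi_1(X)\supseteq G$.

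The argument is essentially formal once Proposition \ref{7} is available; the only point requiring a little care is the covering-space bookkeeping when $Z$ is singular. In particular one should \emph{not} try to reduce to the smooth case by passing to a resolution, since even normalization need not be surjective on $\pi_1$ (e.g.\ the nodal cubic has $\pi_1=\Z$ but its normalization is $\CP^1$), so one must work with the topological fundamental group of $Z$ directly; together with the standard fact that a connected component of an analytic subset is again an analytic subset, this is the main — though still routine — obstacle.
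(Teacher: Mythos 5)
Your proof is correct and follows essentially the same route as the paper's: assume the image is finite, produce a compact positive-dimensional analytic subset of $\widetilde{X}$ as a finite cover of $Z$ sitting inside $\widetilde{X}$, and contradict Proposition \ref{7}. Your connected component of $\pi^{-1}(Z)$ is exactly the lift of the finite cover $\hat{Z}\to Z$ associated with $\ker\iota_*$ that the paper uses, so the two arguments coincide; your version just spells out the monodromy bookkeeping more explicitly.
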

\begin{proof}
    If the image of  $\iota_*:\pi_1(Z)\longrightarrow\pi_1(X)$ were finite, passing to a finite cover $\nu:\hat{Z}\longrightarrow Z$ yields a map $\iota\circ\nu:\hat{Z}\longrightarrow X$ that lifts to the universal cover $\widetilde{X}$ of $X$, hence giving rise to a compact subvariety $\widetilde{Z}$ of $\widetilde{X}$ such that $\pi_{\lvert\widetilde{Z}}:\widetilde{Z}\longrightarrow Z$ is finite. This implies that $\int_{\widetilde{Z}}(\pi^*\omega)>0$, which is a contradiction according to Proposition \ref{7}.
\end{proof}
In particular, this means that the fundamental group of a Kähler hyperbolic manifold is infinite. This gives a partially positive answer to a question raised by S. Kobayashi in \cite{kobayashi1974some}, asking whether a compact (Kobayashi) hyperbolic complex manifold always has an infinite fundamental group, which turned out later to be false in general, since there are plenty of simply connected examples, mainly given by smooth projective general complete intersections of high
degree.

A consequence that can be drawn from Corollary \ref{13} is that if $X$ is Kähler $\widetilde{d}$-exact, then the volume form on $\widetilde{X}$ admits a $d$-potential $\alpha$ with sublinear growth in the sense of (\ref{16}) (see \cite{sikorav2001growth} for the proof). If $X$ is Kähler hyperbolic, we have a little bit more.
\begin{dfn}\cite{sullivan1976cycles} A complete Riemannian manifold $(M^m,g)$ is said to be \textbf{not closed} $($or \textbf{open}$)$ \textbf{at infinity} if it satisfies one of the following equivalent properties :
\begin{enumerate}
    \item $M^m$ admits a \textbf{bounded} volume form $\Omega>0$ $(C^{-1}dV_g\leq\Omega\leq CdV_g$ for some constant $C>0)$  with bounded $d$-potential, i.e. $\Omega=d\Gamma$ such that $\Gamma$ is a bounded $(m-1)$-form with respect to the Riemannian metric $g$.
    \item The following \textbf{linear isoperimetric inequality} holds true : There exists a constant $C>0$ such that for any bounded domain $D\subset M^m$ with $\mathscr{C}^1$-boundary $\partial D$ we have : 
\begin{equation}
    \text{Vol}_g(D)\leq C\hspace{1pt}\text{Area}_g(\partial D) \label{22}
\end{equation} 
\end{enumerate}
\end{dfn}
The proof of the equivalence can be found in \cite{sullivan1976cycles} or \cite{gromov1981hyperbolic}. It is easy to see that the universal cover $\widetilde{X}$ of a Kähler hyperbolic manifold ($X,\omega$) is open at infinity, since :
\begin{equation}
    \widetilde{\omega}^n=d(\eta\wedge\widetilde{\omega}^{n-1})
\end{equation}
and we have :
\begin{equation}
    \lVert\eta\wedge\widetilde{\omega}^{n-1}\rVert_{L^\infty_{\widetilde{\omega}}(\widetilde{X})}\leq\lVert\eta\rVert_{L^\infty_{\widetilde{\omega}}(\widetilde{X})}\lVert\omega\rVert^{n-1}_{L^\infty_{\omega}(X)}<+\infty
\end{equation}
One of the main purposes behind defining this notion of Kähler hyperbolicity was to give a partial affirmative answer to the Hopf-Chern conjecture :
\begin{conj} $($\textbf{Hopf-Chern}, \cite{yau1982problem}, Problem 10.$)$ Let $M$ be a compact Riemannian manifold of real dimension $2n$ of negative sectional curvature. Then :
\begin{equation}
    (-1)^n\chi(M)>0
\end{equation}
where $\chi(M)$ is the topological Euler characteristic of $M$. 
\end{conj}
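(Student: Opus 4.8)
The plan is to convert the purely topological quantity $\chi(M)$ into an analytic invariant on the universal cover and then to pin down its sign through a vanishing theorem for $L^2$-harmonic forms; the classical route via the sign of the Gauss--Bonnet--Chern integrand (the Pfaffian of the curvature) succeeds in low dimensions but cannot work in the stated generality, since in real dimension $\geq 6$ there exist algebraic curvature tensors with negative sectional curvature whose Pfaffian has the wrong sign, so a pointwise argument is ruled out. First I would invoke the Cartan--Hadamard theorem: as $(M,g)$ is complete with negative (in particular nonpositive) sectional curvature, its universal cover $\widetilde M$ is diffeomorphic to $\R^{2n}$, so $M$ is aspherical and $\Gamma:=\pi_1(M)$ acts freely, properly discontinuously and cocompactly on the contractible manifold $(\widetilde M,\widetilde g)$, with $\widetilde g$ the lifted metric. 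This is the geometric input that makes the $L^2$ machinery applicable.

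The second step is the $L^2$-Gauss--Bonnet formula of Atiyah and Cheeger--Gromov. Writing $\mathcal H^k_{(2)}(\widetilde M)$ for the space of $L^2$-harmonic $k$-forms on $(\widetilde M,\widetilde g)$ and $b_k^{(2)}(\widetilde M)=\dim_\Gamma \mathcal H^k_{(2)}(\widetilde M)$ for its von Neumann dimension, one has
\begin{equation}
\chi(M)=\sum_{k=0}^{2n}(-1)^k\, b_k^{(2)}(\widetilde M).
\end{equation}
Thus the sign of $\chi(M)$ is governed entirely by the degrees in which the reduced $L^2$-cohomology of $\widetilde M$ is nontrivial. The target is therefore the Singer-type vanishing
\begin{equation}
\mathcal H^k_{(2)}(\widetilde M)=0\qquad\text{for every }k\neq n,
\end{equation}
which would immediately give $\chi(M)=(-1)^n b_n^{(2)}(\widetilde M)$, hence $(-1)^n\chi(M)=b_n^{(2)}(\widetilde M)\geq 0$; a separate argument that $b_n^{(2)}(\widetilde M)>0$ then upgrades this to the strict inequality.

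The heart of the matter, and the step I expect to be the main obstacle, is the off-middle vanishing. I would attack it in the spirit of Gromov's treatment of Kähler hyperbolicity: produce on $\widetilde M$ a \emph{bounded} primitive of a suitable closed form and convert it, through a Weitzenböck/Bochner identity together with a Lefschetz-type operator, into a uniform spectral lower bound $\Delta\geq c>0$ on $L^2$-forms of degree $\neq n$, which forces $\mathcal H^k_{(2)}(\widetilde M)=0$ there. When $M$ carries a Kähler structure this is exactly the content of the Kähler hyperbolicity studied in this paper: $\widetilde\omega=d\eta$ with $\eta$ bounded, and the Kähler identities let the operator $L=\widetilde\omega\wedge\cdot$ trade the bounded primitive for the desired gap away from the middle degree. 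The genuine difficulty in the stated Riemannian generality is that there is no Kähler form to play this role, so one must manufacture a replacement from the geometry of nonpositive curvature alone — for instance bounded primitives built from Busemann functions and the convexity of the distance function on the Cartan--Hadamard cover — and run the Weitzenböck argument for the full Hodge Laplacian on forms. Controlling the sign of the curvature term in that formula uniformly across all degrees $k\neq n$, purely from negativity of the sectional curvature, is precisely the crux that must be overcome.

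Finally, for strictness I would establish $b_n^{(2)}(\widetilde M)>0$: once the off-middle terms vanish, nonvanishing of the middle $L^2$-cohomology follows from an index/positivity input — in the Kähler hyperbolic case, from the nondegeneracy of $\int_{\widetilde M}\widetilde\omega^n$ transported to a nonzero $L^2$-harmonic representative via the bounded potential $\eta$ — thereby ruling out the degenerate possibility $\chi(M)=0$ and completing the sign determination $(-1)^n\chi(M)>0$.
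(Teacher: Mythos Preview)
The statement you are trying to prove is recorded in the paper as a \emph{conjecture}, not as a theorem: the paper offers no proof of it, and indeed the Hopf--Chern conjecture in the stated generality (compact Riemannian $2n$-manifolds of negative sectional curvature) remains open. The paper only remarks, immediately after stating the conjecture, that Gromov answered it affirmatively \emph{in the special case of K\"ahler hyperbolic manifolds}, and then quotes the spectral gap theorem (Theorem~\ref{33} in the paper) that underlies that proof.

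Your proposal correctly reconstructs the architecture of Gromov's K\"ahler hyperbolic argument: Atiyah's $L^2$-index formula $\chi(M)=\sum_k(-1)^k b_k^{(2)}(\widetilde M)$, the off-middle vanishing $\mathcal{H}^k_{(2)}(\widetilde M)=0$ for $k\neq n$ coming from a spectral gap, and the strict positivity $b_n^{(2)}>0$. But you yourself flag the genuine gap: in the purely Riemannian setting there is no K\"ahler form $\widetilde\omega$ and hence no Lefschetz operator, so the mechanism that converts a bounded primitive into a spectral gap away from the middle degree is simply absent. Your suggestion to ``manufacture a replacement'' from Busemann functions and run a Weitzenb\"ock argument is not a proof but a hope; nobody has made this work, and it is precisely the reason the conjecture is still open. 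The algebraic-curvature counterexamples you mention (Geroch, Bourguignon--Karcher) already show that no pointwise argument can succeed, and no global substitute for the K\"ahler identities is currently known. So your write-up is a fair survey of the known strategy and its limits, but it is not a proof of the conjecture, and there is nothing in the paper to compare it against.
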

This has been answered affirmatively in the case of Kähler hyperbolic manifolds (which includes compact Kähler manifolds with negative sectional curvature, see \cite{gromov1991kahler}). The main tool to prove this was a lower bound on the spectrum of the Hodge-de Rham Laplace operator $\Delta=dd^*+d^*d$ acting on $L^2$-forms of degree $k\neq n$. Namely, we have the following spectral gap result :
\begin{thm}\cite{gromov1991kahler} Let $X^n$ be a complete Kähler manifold equipped with a $d$-bounded Kähler form $\omega$, i.e. $\omega=d\eta$, where $\eta$ is a bounded $1$-form. Then, there exists a constant $\lambda_0=\lambda_0(\lVert\eta\rVert_{L^\infty_\omega(X)})>0$ such that for every $L^2$-form $\psi$ of degree $k\neq n$ on $X$, we have :
\begin{equation}
    \langle\Delta\psi,\psi\rangle\geq\lambda_0^2\lVert\psi\rVert^2_{L^2_\omega(X)} \label{24}
\end{equation}
Furthermore, the inequality (\ref{24}) still holds for $L^2$-forms of degree $n$ which are orthogonal to the harmonic $n$-forms.
\label{33}
\end{thm}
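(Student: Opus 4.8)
The plan is to prove Theorem~\ref{33} by reducing the spectral estimate to the classical pointwise \emph{Lefschetz inequality}, exploiting the fact that when $\omega = d\eta$ with $\eta$ bounded the Lefschetz operator $L = \omega\wedge(\cdot)$ becomes a \emph{bounded} chain homotopy of the $L^2$-de Rham complex to $0$. After a standard smoothing (as in Remark~\ref{26}) I may assume $\eta$ smooth, and I work first on compactly supported smooth forms, which form a core for the Hodge--de Rham Laplacian $\Delta$ (essentially self-adjoint, since $X$ is complete), extending to the general $L^2$-case by density. Writing $\Lambda$ for the pointwise adjoint of $L$, the starting point is the algebraic identity $[\Lambda, L] = (n-k)\,\mathrm{Id}$ on $k$-forms; pairing it with a $k$-form $\psi$ gives $|L\psi|^2 = |\Lambda\psi|^2 + (n-k)|\psi|^2 \geq (n-k)|\psi|^2$ pointwise, hence after integration
\begin{equation*}
  \|L\psi\|_{L^2_\omega(X)}^2 \;\geq\; (n-k)\,\|\psi\|_{L^2_\omega(X)}^2, \qquad 0\leq k\leq n-1,
\end{equation*}
while, since $|\omega|_\omega \equiv \sqrt{n}$, the operator $L$ is also $L^2$-bounded with norm depending only on $n$. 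Second, from $\omega = d\eta$ and the Leibniz rule one gets, with $e(\eta) := \eta\wedge(\cdot)$,
\begin{equation*}
  L \;=\; d\circ e(\eta) \;+\; e(\eta)\circ d ,
\end{equation*}
and $e(\eta)$ is a zeroth-order operator of $L^2$-norm at most $C_n\,\|\eta\|_{L^\infty_\omega(X)}$.

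For a $k$-form $\psi$ with $k\leq n-1$ I combine these: the homotopy formula and one integration by parts give
\begin{equation*}
  \|L\psi\|^2 \;=\; \langle e(\eta)\psi,\; d^*L\psi\rangle \;+\; \langle e(\eta)\,d\psi,\; L\psi\rangle .
\end{equation*}
I then invoke the Kähler identity $[d^*, L] = d^c$, with $d^c$ the standard twisted differential, to replace $d^*L\psi$ by $L\,d^*\psi + d^c\psi$; crucially, on a Kähler manifold the Laplacian of $d^c$ coincides with that of $d$, so $\|d^c\psi\| \leq \langle\Delta\psi,\psi\rangle^{1/2}$. Estimating the three terms by Cauchy--Schwarz, the $L^2$-boundedness of $L$ and $e(\eta)$, and $\|d\psi\|,\|d^*\psi\| \leq \langle\Delta\psi,\psi\rangle^{1/2}$, I obtain
\begin{equation*}
  (n-k)\,\|\psi\|^2 \;\leq\; C_n\,\|\eta\|_{L^\infty_\omega(X)}\,\|\psi\|\,\langle\Delta\psi,\psi\rangle^{1/2} ;
\end{equation*}
dividing by $\|\psi\|$ and using $n-k\geq 1$ yields $\langle\Delta\psi,\psi\rangle \geq \lambda_0^2\|\psi\|^2$ with $\lambda_0 = \bigl(C_n\|\eta\|_{L^\infty_\omega(X)}\bigr)^{-1}$. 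For a $k$-form with $k\geq n+1$ I apply this to $*\psi$, whose degree $2n-k$ is at most $n-1$; since the Hodge star is an $L^2$-isometry commuting with $\Delta$, the bound transfers back to $\psi$.

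It remains to handle the middle degree $k=n$ on the orthogonal complement of the harmonic forms $\mathcal{H}^n$, where the Lefschetz inequality is vacuous. Here I argue spectrally: for $\psi$ of degree $n$ in the domain of $\Delta$, the forms $d\psi$ and $d^*\psi$ have degrees $n+1$ and $n-1$, so the cases already proved give $\langle\Delta(d\psi),d\psi\rangle \geq \lambda_0^2\|d\psi\|^2$ and $\langle\Delta(d^*\psi),d^*\psi\rangle \geq \lambda_0^2\|d^*\psi\|^2$; since $d^*d\psi$ is orthogonal to $\mathrm{Im}\,d$, which contains $dd^*\psi$, these add up to $\|\Delta\psi\|^2 = \|d^*d\psi\|^2 + \|dd^*\psi\|^2 \geq \lambda_0^2\bigl(\|d\psi\|^2 + \|d^*\psi\|^2\bigr) = \lambda_0^2\langle\Delta\psi,\psi\rangle$. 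Feeding $\|\Delta\psi\|^2 \geq \lambda_0^2\langle\Delta\psi,\psi\rangle$ into the spectral theorem for the self-adjoint operator $\Delta$ on degree-$n$ $L^2$-forms forces its spectrum into $\{0\}\cup[\lambda_0^2,+\infty)$, and since $\ker\Delta = \mathcal{H}^n$ in this degree, we conclude $\langle\Delta\psi,\psi\rangle \geq \lambda_0^2\|\psi\|^2$ for every $\psi \perp \mathcal{H}^n$, finishing the proof.

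The main obstacle is precisely this middle-degree case: the Lefschetz trick degenerates at $k=n$, and recovering the gap off $\mathcal{H}^n$ forces one to run the argument through the $L^2$-Hodge theory of the \emph{non-compact, complete} manifold $X$ --- essential self-adjointness of $\Delta$ on $C^\infty_c$, density of compactly supported forms in the form domain, the identification $\ker\Delta = \mathcal{H}^n$, and the spectral-theorem passage from $\|\Delta\psi\|^2 \geq \lambda_0^2\langle\Delta\psi,\psi\rangle$ to a genuine gap. A secondary technical point, on which the size of $\lambda_0$ depends, is the treatment of the error terms in the second paragraph, which requires the Kähler identity $[d^*,L]=d^c$ together with the equality of the $d^c$- and $d$-Laplacians in order to control $d^c\psi$ by $\langle\Delta\psi,\psi\rangle^{1/2}$.
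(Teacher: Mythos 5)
The paper does not actually prove Theorem \ref{33}: it is quoted from \cite{gromov1991kahler} (Theorem 1.4.A there) and used as a black box, so there is no in-paper proof to compare against. Your argument is correct and is essentially Gromov's original one — the $d$-boundedness of $\omega$ makes $L=d\circ e(\eta)+e(\eta)\circ d$ a bounded homotopy, the Kähler identity $[d^*,L]=\pm d^c$ together with $\Delta_{d^c}=\Delta_d$ controls the error terms, and the middle degree is recovered by applying the off-middle gap to $d\psi$ and $d^*\psi$ and invoking the spectral theorem on $(\mathcal{H}^n)^\perp$. The only cosmetic difference is that you use a single power of $L$ with the pointwise identity $|L\psi|^2=|\Lambda\psi|^2+(n-k)|\psi|^2$, whereas Gromov works with $L^{n-k}$ and the pointwise hard Lefschetz isomorphism; both yield a constant depending only on $n$ and $\lVert\eta\rVert_{L^\infty_\omega(X)}$, and your handling of the analytic technicalities (essential self-adjointness on $C^\infty_c$, Gaffney-type integration by parts, $\ker\Delta=\mathcal{H}^n$) is the standard and correct one.
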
 
In particular, this holds true on the universal cover of a compact Kähler hyperbolic manifold. In a similar fashion,  S. Marouani proved this spectral gap when $X$ is a complete Kähler manifold endowed with a ($\partial+\bar\partial$)-bounded Kähler form, i.e. $\omega=\partial\alpha+\bar\partial\beta$, where $\alpha$ and $\beta$ are bounded (see \cite{marouani2023skt}, Theorem 3.19).
\section{New hyperbolicity notions on compact Kähler manifolds}
\subsection{\texorpdfstring{$L^p$}{Lp}-Kähler and \texorpdfstring{$L^p$}{Lp}-SKT hyperbolicity}
Now, we want to define a modified version of hyperbolicity on Kähler manifolds by relaxing the boundedness condition on the $d$-potential of the pullback of the Kähler form $\omega$ to the universal cover $\widetilde{X}$. We will also set a similar definition for the SKT case since it will be important for later purposes, specially in Section 4.
\begin{dfn}
    Let $X$ be a compact Riemannian manifold and let $\pi:\widetilde{X}\longrightarrow X$ be its universal cover. A \textbf{fundamental domain} of $\widetilde{X}$ is an open relatively compact subset $D\Subset\widetilde{X}$ that satisfies the following conditions : 
    \renewcommand{\labelenumi}{(\textit{\roman{enumi}})}
    \begin{enumerate}
        \item The collection $\{\gamma\overline{D}\}_{\gamma\in\pi_1(X)}$ covers $\widetilde{X}$, i.e. $\underset{\gamma\in\pi_1(X)}{\bigcup}\gamma\overline{D}=\widetilde{X}$.
        \item $\partial D:=\overline{D}\setminus D$ has zero measure and $\gamma D\cap D=\emptyset$, for all $\gamma\in\pi_1(X)\setminus\{Id\}$.
    \end{enumerate}
\end{dfn}
\begin{rmk} \begin{enumerate}
    \item (\cite{ma2007holomorphic}-Section 3.6.1, \cite{atiyah1976elliptic}-Section 3) A way to construct this type of domains is the following : Let $\{U_i\}_{1\leq i\leq N}$ be a finite cover of $X$ by small balls, so that we have a continuous section $s_i:\widetilde{X}\longrightarrow X$ over $U_i$, and put $V_i=U_i\setminus\left(\underset{j<i}{\bigcup}\bar U_i\cap U_i\right)$. Then one can check that $D=\underset{1\leq i\leq N}{\bigcup}s_i(V_i)$ is a fundamental domain of $\widetilde{X}$.
    \item We have : $\pi(\overline{D})=X$ and $\pi_{\lvert D}:D\longrightarrow X$ is a diffeomorphism onto its image. More precisely, we have the following diagram :
    \begin{align*}
\xymatrix{ \overline{D}\hspace{2pt}
\ar@{^(->}[r]^{\iota}  \ar[d]_{\pi_{\overline{D}}} & \widetilde{X} 
\ar[d]^{\pi}\\
F \ar[r]_{\phi}^\simeq   & X  }
\end{align*}
    where $F=\overline{D}/\pi_1(X)$, $\iota$ is the inclusion map and $\phi$ is a diffeomorphism (see the proof of Theorem 9.2.4 in \cite{beardon2012geometry}).
    \item It can be shown that for any Riemannian metric $g$ on $X$ and for any two fundamental domains $D$ and $D'$ of $\widetilde{X}$, one has : $\text{Vol}_{\widetilde{g}}(D)=\text{Vol}_{\widetilde{g}}(D')$ (\cite{katok1992fuchsian}, Theorem 3.1.1). In particular, we have : $\text{Vol}_{\widetilde{g}}(\gamma D)\overset{(a)}{=}\text{Vol}_{\widetilde{g}}(D)\overset{(b)}{=}\text{Vol}_g(X)$, for any fundamental domain $D$ of $\widetilde{X}$ and for any $\gamma\in \pi_1(X)$, where (a) comes from the fact $\pi_1(X)$ acts by isometries on $\widetilde{X}$ (Actually, one can easily check that if $D$ is a fundamental domain of $\widetilde{X}$, then so is $\gamma D$, for any $\gamma\in\pi_1(X)$). As for (b), see \cite{katok1992fuchsian}, p.75.
\end{enumerate} \label{1}
\end{rmk}
\begin{dfn} Let $(X,g)$ be a compact Riemannian manifold, and let $D$ be a fundamental domain of $\widetilde{X}$. Let $\alpha$ be a $k$-form on $\widetilde{X}$, and let $1\leq p\leq+\infty$. We say that $\alpha$ satisfies the \textbf{property $($\textbf{F}$_p)$} if :
\begin{equation*}
   \textbf{$(F_p)$ : }\quad \exists\hspace{1pt}C>0\hspace{1pt},\quad\underset{\gamma\in\pi_1(X)}{\sup}\lVert\alpha\rVert_{L^p_{\widetilde{g}}(\gamma D)}\leq C
\end{equation*}
\end{dfn} 
\begin{rmk}
    \begin{enumerate}
    \item The property ($F_p$) does not depend on the choice of the Riemannian metric $g$ on $X$, since for any two Riemannian metrics $g$ and $g'$, the compactness of $X$ implies the existence of a constant $A>0$ such that : 
\begin{equation*}
    \dfrac{1}{A}g'\leq g\leq A g' \Longrightarrow  \dfrac{1}{A}\widetilde{g}'\leq\widetilde{g}\leq A\widetilde{g}'
\end{equation*}
    \item The property ($F_p$) does not depend also on the choice of the fundamental domain. Namely, if $\alpha$ satisfies the property ($F_p$) relatively to some fundamental domain $D$ of $\widetilde{X}$, then it satisies ($F_p$) for every other fundamental domain of $\widetilde{X}$. Indeed, let $\mu_g$ be the Riemannian volume measure induced by the metric $g$, and let $D'$ be another fundamental domain of $\widetilde{X}$, then there exists a finite number of sets $\{\gamma D\}_{\gamma\in\pi_1(X)}$ (let us say $\{\gamma_1 D,\dots,\gamma_k D\}$) such that $D'\underset{\text{a.e}}{\subset}\underset{1\leq j\leq k}{\bigcup}\gamma_j D$ in the sense that : 
    \begin{equation}
        \mu_g\left(\underset{1\leq j\leq k}{\bigcup}\gamma_j D\setminus D'\right)=0 \label{17}
    \end{equation}
    Hence, for any $\gamma_X\in\pi_1(X)$ we have : 
    \begin{equation}
        \int_{\gamma_X D'}\lvert\alpha\rvert^p_{\widetilde{g}}\mu_g\leq\int_{\gamma_X\left(\underset{1\leq j\leq k}{\bigcup}\gamma_j D\right)}\lvert\alpha\rvert^p_{\widetilde{g}}\mu_g\leq k\underset{1\leq j\leq k}{\sup}\int_{\gamma_{X,j}D}\lvert\alpha\rvert^p_{\widetilde{g}}\mu_g \label{3}
    \end{equation}
    where $\gamma_{X,j}=\gamma_X\cdot\gamma_j$. Thus (\ref{3}) implies that
    \begin{equation}
        \underset{\gamma\in\pi_1(X)}{\sup}\lVert\alpha\rVert_{L^p_{\widetilde{g}}(\gamma D')}\leq C\hspace{1pt}k^{\frac{1}{p}}=:C'>0
    \end{equation}
    Which gives us the property $(F_p)$ for $\alpha$ on $D'$ as well.
    \end{enumerate} \label{19}
\end{rmk}
\begin{rmk}
    Since the fundamental group of a compact manifold is countable (this is due to the fact that every compact topological manifold is homotopy equivalent to a finite CW-complex \cite{west1977mapping}, the latter has a finitely presented fundamental group), and a countable union of sets of measure zero has measure zero, together with the property ($i$) of fundamental domains, leads to the following : If $D$ is a fundamental domain of $\widetilde{X}$, then : 
    \begin{equation}
        \underset{\gamma\in\pi_1(X)}{\sup}\lVert\alpha\rVert_{L^p_{\widetilde{g}}(\gamma D)}=0 \Longrightarrow \alpha=0\quad \text{almost everywhere} 
    \end{equation}  \label{2} \vspace{-15pt}
\end{rmk}
Let us now define the notion of $L^p$-Kähler hyperbolicity.
\begin{dfn} Let $1\leq p\leq+\infty$, we say that a compact complex manifold $X$ is :
\begin{enumerate}
    \item \textbf{\textit{L}$^p$-Kähler hyperbolic} if it admits a Kähler metric $\omega$ such that  $\widetilde{\omega}=d\eta$, where $\eta$ is a $1$-form $($not necessarily smooth$)$ satisfying the property $(F_p)$ on $\widetilde{X}$. Such a metric $\omega$ is called \textbf{\textit{L}$^p$-Kähler hyperbolic}.
    \item \textbf{\textit{L}$^p$-SKT hyperbolic} if it admits an SKT metric $\omega$ such that  $\widetilde{\omega}=\partial\alpha+\bar\partial\beta$, where $\alpha$ and $\beta$ are $(0,1)$ and respectively $(1,0)$-forms $($not necessarily smooth$)$ that satisfy the property $(F_p)$ on $\widetilde{X}$. Such a metric $\omega$ is called \textbf{\textit{L}$^p$-SKT hyperbolic}.
\end{enumerate}
\end{dfn}
\begin{rmk}
\begin{enumerate}
    \item Note that (when $p=+\infty$), $L^\infty$-Kähler (resp. $L^\infty$-SKT) hyperbolicity is equivalent to the classical notion of Kähler (resp. SKT) hyperbolicity for the reasons we mentioned in Remark \ref{2}.
    \item If a manifold $X$ is $L^p$-Kähler hyperbolic for some $1\leq p\leq +\infty$, then $X$ is $\widetilde{d}$-exact thanks to Remark \ref{26}. 
\end{enumerate}   \label{49}
\end{rmk} 
Now we will try to investigate the relation between Kähler (resp. SKT) hyperbolicity and $L^p$-Kähler (resp. $L^p$-SKT) hyperbolicity for $1\leq p<+\infty$.
\begin{lem}
    Let $\omega$ be a Hermitian metric on $X$ and let $\eta$ be a $\widetilde{\omega}$-bounded $k$-form on $\widetilde{X}$. Then $\eta$ has the property $(F_p)$ for every $1\leq p\leq +\infty$. \label{4}
\end{lem}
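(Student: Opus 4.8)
The plan is to reduce the statement to the single fundamental domain $D$ and exploit the $\pi_1(X)$-invariance of the norm. Since the property $(F_p)$ is independent of the choice of fundamental domain (Remark \ref{19}) and of the Riemannian metric on $X$, it suffices to fix one fundamental domain $D$ and show that $\sup_{\gamma\in\pi_1(X)}\lVert\eta\rVert_{L^p_{\widetilde{g}}(\gamma D)}<+\infty$, where I take $g$ to be the Riemannian metric underlying the Hermitian metric $\omega$.

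First I would record that $\eta$ being $\widetilde{\omega}$-bounded means there is a constant $M>0$ with $\lvert\eta\rvert_{\widetilde{\omega}}(x)\leq M$ for all $x\in\widetilde{X}$. Next, observe that $\widetilde{\omega}=\pi^*\omega$, so the pointwise norm $\lvert\eta\rvert_{\widetilde{\omega}}$ and the volume measure $\mu_{\widetilde{g}}$ are both $\pi_1(X)$-invariant (the deck transformations act by isometries of $\widetilde{\omega}$, as noted in Remark \ref{1}). Hence for the case $1\leq p<+\infty$,
\begin{equation*}
    \lVert\eta\rVert_{L^p_{\widetilde{g}}(\gamma D)}^p=\int_{\gamma D}\lvert\eta\rvert^p_{\widetilde{\omega}}\,\mu_{\widetilde{g}}\leq M^p\,\mu_{\widetilde{g}}(\gamma D)=M^p\,\mu_{\widetilde{g}}(D)=M^p\,\mathrm{Vol}_g(X)
\end{equation*}
using Remark \ref{1}(3) for the last two equalities. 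Taking $p$-th roots gives the uniform bound $\lVert\eta\rVert_{L^p_{\widetilde{g}}(\gamma D)}\leq M\,\mathrm{Vol}_g(X)^{1/p}$, independent of $\gamma$, which is exactly $(F_p)$. The case $p=+\infty$ is immediate: $\lVert\eta\rVert_{L^\infty_{\widetilde{g}}(\gamma D)}\leq M$ for every $\gamma$, since the global sup bound restricts to each $\gamma D$.

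There is no serious obstacle here; the only point requiring a little care is making sure the relevant objects are genuinely $\pi_1(X)$-invariant, which follows because $\widetilde{\omega}=\pi^*\omega$ is a pullback and deck transformations commute with $\pi$. One should also note $\eta$ need not be smooth, but this is harmless: the estimate only uses the pointwise bound and the finiteness of $\mathrm{Vol}_g(X)=\mu_{\widetilde{g}}(D)<+\infty$, so measurability of $\lvert\eta\rvert^p_{\widetilde{\omega}}$ is all that is needed for the integral to make sense. This immediately yields, in combination with Remark \ref{49}, that Kähler hyperbolicity implies $L^p$-Kähler hyperbolicity and SKT hyperbolicity implies $L^p$-SKT hyperbolicity, for every $1\leq p\leq+\infty$.
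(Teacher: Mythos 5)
Your proof is correct and follows essentially the same route as the paper's: both arguments bound $\lvert\eta\rvert_{\widetilde{\omega}}$ pointwise by its sup norm and use the $\gamma$-independence of $\mathrm{Vol}_{\widetilde{\omega}}(\gamma D)=\mathrm{Vol}_{\omega}(X)$ (Remark \ref{1}) to obtain the uniform constant $\lVert\eta\rVert_{L^\infty_{\widetilde{\omega}}(\widetilde{X})}\,\mathrm{Vol}_{\omega}(X)^{1/p}$. The only cosmetic difference is that the paper dispatches the case $p=+\infty$ by citing Remark \ref{49} while you argue it directly; both are fine.
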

\begin{proof}
    The case $p=+\infty$ is due to Remark \ref{49}. Assume $p<+\infty$, and let $D$ be a fundamental domain of $\widetilde{X}$ and $\gamma\in\pi_1(X)$. Then :
    \begin{equation}
        \lVert\eta\rVert_{L^p_{\widetilde{\omega}}(\gamma D)}=\left(\int_{\gamma D}\lvert\eta\rvert_{\widetilde{\omega}}^p\widetilde{\omega}_n\right)^{\frac{1}{p}}\leq\lVert\eta\rVert_{L^\infty_{\widetilde{\omega}}(\widetilde{X})}(\text{Vol}_{\widetilde{\omega}}(\gamma D))^\frac{1}{p}\overset{(*)}{=}\lVert\eta\rVert_{L^\infty_{\widetilde{\omega}}(\widetilde{X})}(\text{Vol}_{\omega}(X))^\frac{1}{p}=:C
    \end{equation}
    where $C>0$ is independent of $\gamma$ and $(*)$ comes from the fact that $\pi_1(X)$ acts on $\widetilde{X}$ by isometries and Remark \ref{1}.(2). Hence $\eta$ has the property $(F_p)$.
\end{proof}
\begin{cor}
        For any $1\leq p\leq +\infty$, if $\alpha$ is a $d$-closed $k$-form on $X$ such that $\pi^*\alpha$ has a $d$-potential with the $(F_p)$ property, then for every $k$-form $\beta\in\{\alpha\}_{DR}\in H_{DR}^k(X,\C)$, $\pi^*\beta$ also has a $d$-potential with the $(F_p)$ property. \label{5}
\end{cor}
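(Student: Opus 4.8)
The plan is to reduce the statement to Lemma~\ref{4} combined with the cohomological invariance already recorded in Remark~\ref{26}.(2) (via \cite{chen2018compact}, Lemma 2.4, suitably adapted). First I would observe that since $\alpha$ and $\beta$ lie in the same de Rham class on $X$, we may write $\beta=\alpha+d\tau$ for some smooth $(k-1)$-form $\tau$ on $X$. Pulling back by $\pi$ gives $\pi^*\beta=\pi^*\alpha+d(\pi^*\tau)$. By hypothesis $\pi^*\alpha=d\eta$ for some $(k-1)$-form $\eta$ on $\widetilde{X}$ having the property $(F_p)$, so $\pi^*\beta=d(\eta+\pi^*\tau)$, and it remains only to check that $\eta+\pi^*\tau$ still has the property $(F_p)$.

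The key point is that $\pi^*\tau$ is automatically $(F_p)$, and in fact $\widetilde{\omega}$-bounded for any fixed Hermitian metric $\omega$ on $X$: indeed $\tau$ is a smooth form on the compact manifold $X$, hence $\lvert\tau\rvert_\omega$ is bounded on $X$, and since $\pi$ is a local isometry between $(\widetilde{X},\widetilde{\omega})$ and $(X,\omega)$ we get $\lvert\pi^*\tau\rvert_{\widetilde{\omega}}(x)=\lvert\tau\rvert_\omega(\pi(x))\leq\lVert\tau\rVert_{L^\infty_\omega(X)}<+\infty$ for all $x\in\widetilde{X}$. By Lemma~\ref{4}, $\pi^*\tau$ then has the property $(F_p)$ for every $1\leq p\leq+\infty$. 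Since the property $(F_p)$ is clearly stable under addition — for $p<+\infty$ this is the triangle inequality in $L^p(\gamma D)$ applied uniformly in $\gamma$, i.e. $\lVert\eta+\pi^*\tau\rVert_{L^p_{\widetilde{\omega}}(\gamma D)}\leq\lVert\eta\rVert_{L^p_{\widetilde{\omega}}(\gamma D)}+\lVert\pi^*\tau\rVert_{L^p_{\widetilde{\omega}}(\gamma D)}$, and for $p=+\infty$ it is immediate — the form $\eta+\pi^*\tau$ inherits $(F_p)$ from its two summands. This exhibits $\pi^*\beta$ as $d$ of an $(F_p)$ form, which is exactly the claim.

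One small subtlety I would address explicitly is the regularity of $\eta$: the hypothesis only asks for a $d$-potential that is not necessarily smooth, so the identity $\beta=\alpha+d\tau$ on $X$ must be pulled back and combined with $\pi^*\alpha=d\eta$ at the level of currents (or distributional forms) on $\widetilde{X}$; this is harmless since $d$ commutes with $\pi^*$ and with the current/smooth-form pairing, and the de Rham isomorphism $H^k_{cur}(\widetilde{X},\C)\simeq H^k_{DR}(\widetilde{X},\C)$ invoked in Remark~\ref{26} guarantees consistency. I do not expect any genuine obstacle here; the only thing requiring a line of care is that $\tau$ genuinely exists as a \emph{smooth} global form on $X$ (standard de Rham theory) so that $\pi^*\tau$ is smooth and the boundedness argument above goes through verbatim.
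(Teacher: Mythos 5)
Your argument is correct and is exactly the intended deduction: the paper states this as an immediate corollary of Lemma~\ref{4} without writing out a proof, and the route you take --- writing $\beta=\alpha+d\tau$ with $\tau$ smooth, noting that $\pi^*\tau$ is $\widetilde{\omega}$-bounded by compactness of $X$, invoking Lemma~\ref{4} to get $(F_p)$ for $\pi^*\tau$, and concluding by the triangle inequality --- is precisely the one the paper relies on. Your extra remark on the regularity of $\eta$ and the use of the current-level de Rham isomorphism from Remark~\ref{26} is a sensible point of care and is consistent with the paper's conventions.
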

\begin{prop}
\begin{enumerate}
    \item If $X$ is a Kähler $($resp. SKT$)$ hyperbolic manifold, then $X$ is $L^p$-Kähler $($resp. $L^p$-SKT$)$ hyperbolic, for any $1\leq p\leq+\infty$. More generally, if $X$ is $L^p$-Kähler $($resp. $L^p$-SKT$)$ hyperbolic, then $X$ is $L^r$-Kähler $($resp. $L^r$-SKT$)$ hyperbolic for any $1\leq r\leq p$.
    \item Let $1\leq p\leq+\infty$. If $X$ and $Y$ are compact $L^p$-Kähler $($resp. $L^p$-SKT$)$ hyperbolic manifolds, then so is $X\times Y$.
\end{enumerate} \label{14}
\end{prop}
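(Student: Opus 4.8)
The plan is to reduce every assertion to Lemma \ref{4}, Hölder's inequality on the finite-volume sets $\gamma D$, and the elementary behaviour of metrics, volume forms and the operators $d,\partial,\bar\partial$ under products.

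For (1), I would argue as follows. If $\omega$ is Kähler hyperbolic, with $\widetilde\omega=d\eta$ and $\eta$ a $\widetilde\omega$-bounded $1$-form, then Lemma \ref{4} already gives that $\eta$ has $(F_p)$ for every $p$, so the very same metric $\omega$ witnesses $L^p$-Kähler hyperbolicity; the SKT case is identical, applying Lemma \ref{4} to the bounded forms $\alpha,\beta$ in $\widetilde\omega=\partial\alpha+\bar\partial\beta$. For the ``more generally'' part, suppose $\widetilde\omega=d\eta$ with $\eta$ satisfying $(F_p)$ and fix $1\le r\le p$. If $p=+\infty$ then $\eta$ is $\widetilde\omega$-bounded (Remark \ref{49}) and Lemma \ref{4} finishes; if $p<+\infty$, fix a fundamental domain $D$, note that by Remark \ref{1}.(3) each $\gamma D$ has $\widetilde\omega$-volume equal to $\text{Vol}_\omega(X)<+\infty$, and apply Hölder's inequality on $(\gamma D,\widetilde\omega_n)$ with exponent $p/r$ to get
\begin{equation*}
\lVert\eta\rVert_{L^r_{\widetilde\omega}(\gamma D)}\le\lVert\eta\rVert_{L^p_{\widetilde\omega}(\gamma D)}\,\text{Vol}_\omega(X)^{\frac1r-\frac1p}\le C\,\text{Vol}_\omega(X)^{\frac1r-\frac1p},
\end{equation*}
a bound independent of $\gamma$; hence $\eta$ has $(F_r)$. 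The SKT statement follows by running this estimate separately on $\alpha$ and $\beta$.

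For (2), the idea is to take the product metric $\omega=p_X^*\omega_X+p_Y^*\omega_Y$ on $X\times Y$, where $\omega_X,\omega_Y$ are $L^p$-Kähler (resp.\ $L^p$-SKT) hyperbolic metrics and $p_X,p_Y$ are the projections; $\omega$ is Kähler, and SKT when $\omega_X,\omega_Y$ are, since $p_X,p_Y$ are holomorphic so $\partial\bar\partial$ commutes with their pullbacks. Using that the universal cover of $X\times Y$ is $\widetilde X\times\widetilde Y$, that $\pi_1(X\times Y)=\pi_1(X)\times\pi_1(Y)$, and that $p_X$ lifts to the coordinate projection $\tilde p_X\colon\widetilde X\times\widetilde Y\to\widetilde X$ (similarly $\tilde p_Y$), one obtains $\widetilde\omega=\tilde p_X^*\widetilde{\omega_X}+\tilde p_Y^*\widetilde{\omega_Y}$, hence $\widetilde\omega=d\eta$ with $\eta=\tilde p_X^*\eta_X+\tilde p_Y^*\eta_Y$ when $\widetilde{\omega_X}=d\eta_X$, $\widetilde{\omega_Y}=d\eta_Y$ with $\eta_X,\eta_Y$ satisfying $(F_p)$ on the respective covers. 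Choosing $D=D_X\times D_Y$ for fundamental domains $D_X,D_Y$ (which is again a fundamental domain of the product, or else invoke Remark \ref{19}.(2)), for $\gamma=(\gamma_X,\gamma_Y)$ one has $\gamma D=\gamma_XD_X\times\gamma_YD_Y$; since the metric splits, the two summands of $\eta$ are pointwise orthogonal, $|\eta|^2_{\widetilde\omega}(x,y)=|\eta_X|^2_{\widetilde{\omega_X}}(x)+|\eta_Y|^2_{\widetilde{\omega_Y}}(y)$, and $\widetilde\omega_n$ is the product of the two volume forms. The elementary inequality $(a^2+b^2)^{p/2}\le C_p(a^p+b^p)$ together with Fubini then bound $\int_{\gamma D}|\eta|^p_{\widetilde\omega}\,\widetilde\omega_n$ by
\begin{equation*}
C_p\Big(\text{Vol}_{\omega_Y}(Y)\,\lVert\eta_X\rVert^p_{L^p(\gamma_XD_X)}+\text{Vol}_{\omega_X}(X)\,\lVert\eta_Y\rVert^p_{L^p(\gamma_YD_Y)}\Big),
\end{equation*}
which is uniformly bounded in $\gamma$ by $(F_p)$ for $\eta_X$ and $\eta_Y$; the case $p=+\infty$ is the same with sup-norms. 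The SKT case is identical, with $\alpha=\tilde p_X^*\alpha_X+\tilde p_Y^*\alpha_Y$ and $\beta=\tilde p_X^*\beta_X+\tilde p_Y^*\beta_Y$.

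None of these steps is genuinely hard; the main point deserving care — and the closest thing to an obstacle — is the bookkeeping in (2): checking that a product of fundamental domains is again a fundamental domain (its topological boundary stays null, and translates of the product are disjoint because translates of each factor are), that $p_X$ really lifts to the coordinate projection of the product universal cover, and that using the merely-$L^p$ (non-smooth) potentials causes no trouble, since the pointwise orthogonality and norm identities all hold almost everywhere.
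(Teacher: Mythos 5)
Your proposal is correct and follows essentially the same route as the paper: Lemma \ref{4} for the passage from bounded potentials to $(F_p)$, Hölder's inequality on the finite-volume translates $\gamma D$ for the $L^p\Rightarrow L^r$ implication, and the product metric $pr_X^*\omega_X+pr_Y^*\omega_Y$ with the product fundamental domain $D_X\times D_Y$ for part (2). The only (harmless) deviation is cosmetic: where the paper bounds $\lVert\widetilde{pr_X}^*\eta_X+\widetilde{pr_Y}^*\eta_Y\rVert_{L^p(\gamma D)}$ by the triangle inequality, you use the pointwise splitting $\lvert\eta\rvert^2_{\widetilde{\omega}}=\lvert\eta_X\rvert^2+\lvert\eta_Y\rvert^2$ together with Fubini, and you correctly keep the volume factors $\mathrm{Vol}_{\omega_Y}(Y)$, $\mathrm{Vol}_{\omega_X}(X)$ that arise when integrating a pulled-back form over the product domain.
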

\begin{proof} We are going to do the proof for the Kähler case, and the SKT case can be treated similarly.
\begin{enumerate}
    \item If $X$ is Kähler hyperbolic, apply Lemma \ref{4}. to get the fact that $X$ is $L^p$-Kähler hyperbolic for any $1\leq p<+\infty$. Now let $\omega$ be an $L^p$-Kähler hyperbolic metric on $X$, and let $q>0$ be such that $\dfrac{1}{p}+\dfrac{1}{q}=\dfrac{1}{r}$, then $\dfrac{p}{r}$ and $\dfrac{q}{r}$ are conjugate Hölder indices. Hence, Hölder's inequality yields for any $\gamma\in\pi_1(X)$ :
    \begin{align}
        \int_{\gamma D}\lvert\eta\rvert_{\widetilde{\omega}}^r\widetilde{\omega}_n&\leq \left(\int_{\gamma D}\lvert\eta\rvert^p\widetilde{\omega}_n\right)^{\frac{r}{p}}\left(\int_{\gamma D}\widetilde{\omega}_n\right)^{\frac{r}{q}} \\ &=\lVert\eta\rVert^r_{L^p_{\widetilde{\omega}}(\gamma D)}\text{Vol}^{\frac{r}{q}}_\omega(X) \label{6}
    \end{align}
    where (\ref{6}) comes the Remark \ref{1}.3. Hence we have : 
    \begin{equation}
        \underset{\gamma\in\pi_1(X)}{\sup}\lVert\eta\rVert_{L^r_{\widetilde{\omega}}(\gamma D)}\leq\underset{\gamma\in\pi_1(X)}{\sup}\lVert\eta\rVert_{L^p_{\widetilde{\omega}}(\gamma D)}\text{Vol}_{\omega}^{\frac{1}{r}-\frac{1}{p}}(X)=:C'
    \end{equation}
    where $C'=C\text{Vol}_{\omega}^{\frac{1}{r}-\frac{1}{p}}(X)>0$ is obviously independent of $\gamma$.
    \item Let $\omega_X$ (resp. $\omega_Y$) be a Kähler metric on $X$ (resp. $Y$) such that $\widetilde{\omega}_X=d\eta_X$ (resp. $\widetilde{\omega}_Y=d\eta_Y$), where $\eta_X$ (resp. $\eta_Y$) have the ($F_p$) property with some constant $C_X>0$ (resp. $C_Y>0$). Let $pr_X$ (resp. $pr_Y$) be the projection of $X\times Y$ on $X$ (resp. on $Y$) and $\pi:\widetilde{X\times Y}\longrightarrow X\times Y$ be the universal cover of $X\times Y$. Then we have the following :
    \begin{enumerate}
        \item The projections $pr_X$ and $pr_Y$ lift to projections $\widetilde{pr_X}$ and $\widetilde{pr_Y}$ from $\widetilde{X\times Y}$ to $\widetilde{X}$ and respectively $\widetilde{Y}$ (since $\pi_1(X\times Y)\simeq\pi_1(X)\times\pi_1(Y)$ via the isomorphism given by : $[\gamma]\in\pi_1(X\times Y)\longmapsto([pr_X\circ\gamma],[pr_Y\circ\gamma])\in\pi_1(X)\times\pi_1(Y)$, and hence $\widetilde{X\times Y}\simeq\widetilde{X}\times\widetilde{Y}$ by unicity of the universal cover). In particular, we have :
        \begin{equation}
            pr_X\circ\pi=\pi_X\circ\widetilde{pr_X} \qquad\text{and}\qquad
            pr_Y\circ\pi=\pi_Y\circ\widetilde{pr_Y} \label{15}
        \end{equation}
        \item $\omega=pr_X^*\omega_X+pr_Y^*\omega_Y$ is a Kähler metric on $\widetilde{X}\times \widetilde{Y}$, and we have : 
        \begin{equation}
            \widetilde{\omega}=\pi^*pr_X^*\omega_X+\pi^*pr_Y^*\omega_Y\overset{(*)}{=}\widetilde{pr_X}^*\widetilde{\omega}_X+\widetilde{pr_Y}^*\widetilde{\omega}_Y=d(\widetilde{pr_X}^*\eta_X+\widetilde{pr_Y}^*\eta_Y) \label{18}
        \end{equation}
        where $(*)$ comes from the identities in (\ref{15}).
        \item If $D_X$ and $D_Y$ are fundamental domains of $\widetilde{X}$ and respectively $\widetilde{Y}$, then one can check that $D=D_X\times D_Y$ is a fundamental domain of $\widetilde{X}\times\widetilde{Y}$, and that for every $\gamma=(\gamma_X,\gamma_Y)\in\pi_1(X)\times\pi_1(Y)$ we have : $\gamma D=(\gamma_X D_X)\times(\gamma_Y D_Y)$. Hence, we get :
    \end{enumerate}
    \begin{align}
        \lVert\widetilde{pr_X}^*\eta_X+\widetilde{pr_Y}^*\eta_Y\rVert_{L^p_{\widetilde{\omega}}(\gamma D)}\leq&\lVert\widetilde{pr_X}^*\eta_X\rVert_{L^p_{\widetilde{\omega}}(\gamma D)}+\lVert\widetilde{pr_Y}^*\eta_Y\rVert_{L^p_{\widetilde{\omega}}(\gamma D)} \\
        &=\lVert\eta_X\rVert_{L^p_{\widetilde{\omega}_X}(\gamma_X D_X)}+\lVert\eta_Y\rVert_{L^p_{\widetilde{\omega}_Y}(\gamma_Y D_Y)}
    \end{align}
    Thus : $\underset{\gamma\in\pi_1(X\times Y)}{\sup}\lVert\widetilde{pr_X}^*\eta_X+\widetilde{pr_Y}^*\eta_Y\rVert_{L^p_{\widetilde{\omega}}(\gamma D)}\leq C_X+C_Y=:C>0$, which means that the $d$-potential of $\widetilde{\omega}$ given in (\ref{18}) satisfies the ($F_p$) property (which does not depend on the choice of the fundamental domain of $\widetilde{X}\times\widetilde{Y}$ by Remark \ref{19}). Hence $X\times Y$ is $L^p$-Kähler hyperbolic. 
    \end{enumerate}
\end{proof} 
Note that for compact Riemann surfaces, this notion of $L^p$-Kähler hyperbolicity is not a new hyperbolicity notion. We already know that Kähler hyperbolicty implies Kobayashi/Brody hyperbolicity, which in turns implies negative curvature in dimension 1, which give us back Kähler hyperbolicity. Recall that we proved that Kähler hyperbolicty implies $L^p$-Kähler hyperbolicity in Proposition \ref{14}. Now, we show the following :
\begin{prop}
    Let $X$ be a compact Riemann surface, and let $1\leq p\leq+\infty$. Then :
    \begin{center}
        $X$ is $L^p$-Kähler hyperbolic  $\Longrightarrow$ $X$ is Brody hyperbolic
    \end{center}
\end{prop}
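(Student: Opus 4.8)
The plan is to argue by contradiction: suppose $X$ is a compact Riemann surface that is $L^p$-Kähler hyperbolic but not Brody hyperbolic, so there exists a non-constant entire curve $f:\C\longrightarrow X$. Since $X$ is compact of dimension $1$, $X$ is then either $\CP^1$, an elliptic curve, or a curve of genus $\geq 2$; the existence of a non-constant $f:\C\to X$ rules out genus $\geq 2$ (its universal cover is the disc), so $X$ is $\CP^1$ or a torus. In both cases $X$ has finite (indeed trivial) fundamental group or is a torus with $\widetilde{X}=\C$. I would then show directly that neither admits an $L^p$-Kähler hyperbolic metric, which is the content we want.

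The cleanest route is to upgrade Corollary \ref{13} / Proposition \ref{7}: an $L^p$-Kähler hyperbolic manifold is in particular Kähler $\widetilde d$-exact (Remark \ref{49}.(2)), and hence by Corollary \ref{13} it must have infinite fundamental group. This immediately eliminates $\CP^1$ (simply connected). For the elliptic curve $X=\C/\Lambda$ we have $\widetilde X=\C$ with its flat metric, and the pullback $\widetilde\omega$ of any Kähler form is (up to a positive constant) the standard Euclidean area form $\tfrac{\ii}{2}dz\wedge d\bar z$. Write $\widetilde\omega=d\eta$ for a $1$-form $\eta$ with the $(F_p)$ property on $\C$. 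The fundamental domain can be taken to be a fixed parallelogram $D$, and $\{\gamma D\}_{\gamma\in\Lambda}$ tiles $\C$; by Stokes on the box $R_k=\bigcup_{\gamma\in\Lambda,\ |\gamma|\le k}\gamma D$ one gets $\mathrm{Vol}(R_k)=\int_{R_k}d\eta=\int_{\partial R_k}\eta$, so $\mathrm{Area}(R_k)\le \|\eta\|_{L^1(\partial R_k)}$. The key point is the growth rates: $\mathrm{Area}(R_k)\sim c\,k^2$, while $\partial R_k$ meets only $O(k)$ of the tiles $\gamma D$, so by the $(F_p)$ bound (and Hölder, reducing to the $p=1$ case via Proposition \ref{14}.(1)) $\|\eta\|_{L^1(\partial R_k)}\le \sum_{\gamma D\cap \partial R_k\ne\emptyset}\|\eta\|_{L^1(\gamma D\cap\partial R_k)}\le O(k)\cdot C'$. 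Comparing $k^2 \lesssim k$ as $k\to\infty$ gives a contradiction.

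The main obstacle is the last estimate: $\eta$ need not be smooth, and "restricting" an $L^p$ form to the boundary $\partial R_k$ is not literally meaningful, so Stokes' theorem has to be applied with care. I would handle this by first recovering a smooth $d$-potential: since $\widetilde\omega$ is exact as a current its de Rham class vanishes (Remark \ref{26}.(1)), so we may take $\eta$ smooth; the $(F_p)$ bound is preserved in the sense that the cohomology class still admits a smooth representative potential with a uniform $L^p$-bound on translates of $D$ (one can convolve/regularize $\eta$ on $\C$ against a fixed compactly supported mollifier, which changes $d$-potentials by an exact form and only enlarges the $(F_p)$ constant by a controlled amount because the mollifier kernel is supported in a fixed ball meeting boundedly many tiles). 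With $\eta$ smooth the Stokes computation above is rigorous, and instead of integrating over the exact boundary $\partial R_k$ one can integrate over a slightly fattened annular shell $R_{k+1}\setminus R_k$: $\mathrm{Vol}(R_k)\le \int_{R_{k+1}\setminus R_k}|\eta|_{\widetilde\omega}\,|d\chi_k|\,\widetilde\omega_n$ for a cutoff $\chi_k$ equal to $1$ on $R_k$ and $0$ outside $R_{k+1}$ with $|d\chi_k|$ bounded, again giving $c\,k^2\le O(k)$ and the contradiction. This closes the elliptic case, and combined with the $\CP^1$ case finishes the proof.
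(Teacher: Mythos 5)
Your overall strategy coincides with the paper's: reduce to $p=1$ via Proposition \ref{14}, invoke uniformization, kill $\CP^1$ immediately, and derive a contradiction on the torus by comparing quadratic area growth of an exhausting family with the linear growth of the $(F_1)$-controlled mass of $\eta$ near its boundary. The implementations of the torus case differ in technique, and both work: the paper integrates the Stokes inequality $A_{\widetilde\omega}(\D_t)\le\int_{\Sp_t}\lvert\eta\rvert\,d\sigma$ over $t\in[0,R]$ (a co-area argument), converting the circle integrals into the two-dimensional integral $\int_{\D_R}\lvert\eta\rvert_{\widetilde\omega}\widetilde\omega$, which is what $(F_1)$ actually controls, and gets $R^3\lesssim R^2$; you instead use expanding squares and a cutoff $\chi_k$, getting $k^2\lesssim k$. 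One point to be careful about: your first-pass estimate $\lVert\eta\rVert_{L^1(\partial R_k)}\le\sum\lVert\eta\rVert_{L^1(\gamma D\cap\partial R_k)}\le O(k)\,C'$ does not follow from $(F_1)$ even for smooth $\eta$ — the issue is not regularity but dimension, since a bound on $\int_{\gamma D}\lvert\eta\rvert\,dA$ gives no control on the line integral of $\lvert\eta\rvert$ over a curve through $\gamma D$. Your fattened-shell version repairs this correctly, because $\int\chi_k\widetilde\omega=-\int d\chi_k\wedge\eta$ is bounded by the two-dimensional integral $\int_{R_{k+1}\setminus R_k}\lvert d\chi_k\rvert\lvert\eta\rvert\,\widetilde\omega$ over $O(k)$ tiles; note that this identity already holds in the sense of currents for $\eta\in L^1_{loc}$, so the mollification step is not actually needed. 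The paper's co-area device and your cutoff device are two standard ways of making the same dimensional bookkeeping rigorous; neither buys substantially more than the other here.
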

By Proposition \ref{14}, it is sufficient to show this result for $p=1$. The proof uses the celebrated \textit{uniformization theorem} established at the beginning of the 20th century by H. Poincaré \cite{poincare1908uniformisation} and P. Koebe \cite{koebe1907uniformisierung} independently. It states that every simply connected Riemann surface is biholomorphic to the Riemann sphere $\CP^1$, the complex plane $\C$, or the unit disc $\D$.
\begin{proof} To show that $X$ is Brody hyperbolic, we shall argue that the only possibility for the universal cover $\widetilde{X}$ of $X$ is to be biholomorphic to $\D$.

First, we can easily rule out the case $\widetilde{X}\simeq\CP^1$, since a Kähler form on $\CP^1$ cannot be exact (because $\CP^1$ is compact).

It remains to show that $\widetilde{X}$ cannot be biholomorphic to $\C$, i.e. $X$ is not a complex torus. Assume that this is the case, then by the \textbf{Hodge isomorphism theorem}, one has : $H^{1,1}(X,\R)\simeq\mathcal{H}^{1,1}(X,\R)$. On the other hand, note that a (1,1)-form $\alpha$ is of top degree (since $\dim_\C X=1)$, so it is proportional to the standard Kähler metric $\omega_0$ on the torus $X$ (the one induced by the euclidean metric on $\C$), i.e. $\alpha=f\omega_0$ for some $f\in\mathscr{C}^\infty(X)$. We consider $\Delta''_0$ to be the $\bar\partial$-Laplacian induced by $\omega_0$, then :
\begin{equation*}
    \Delta_0''\alpha=\Delta_0''(f\omega_0)=(\Delta_0'' f)\omega_0
\end{equation*}
since the Lefschetz operator ($\cdot\longmapsto\omega_0\wedge\cdot$) commutes with the $\bar\partial$-Laplacian on Kähler manifolds. Hence $\alpha$ is harmonic if and only if $f$ is harmonic. But since $X$ is compact, $f$ must be constant by the maximum principle. In particular, (1,1)-classes on $X$ are in a one-to-one correspondence with constant hermitian forms on $\mathbb{C}$.

Thanks to Corollary \ref{5}, we only need to show that there is no $\Delta_0''$-harmonic Kähler metric $\omega$ such that $\widetilde{\omega}=d\eta$, where $\eta$ satisfies the ($F_1$) property. We may write :
\begin{equation}
    \widetilde{\omega}=c\dfrac{i}{2}dz\wedge d\bar z=cdx\wedge dy
\end{equation}
where $z=x+iy\in\C$, $x,y\in\R$. We can assume (without loss of generality) that $c=1$, and that the fundamental group $\pi_1(X)\simeq\Z^2$ is the standard lattice in $\C$ generated by $\gamma_1=(1,0)$ and $\gamma_2=(0,1)$. One can write $\widetilde{\omega}$ in polar coordinates $(r,\theta)$ :
\begin{equation}
    \widetilde{\omega}=d(r\cos\theta)\wedge d(r\sin\theta)=rdr\wedge d\theta=dr\wedge d\sigma
\end{equation}
where $d\sigma=rd\theta$ is a 1-form on $\C$ that induces a length measure on the circles $\Sp_r\subset\C$. Namely, we have :
\begin{equation}
    L_{\widetilde{\omega}}(\Sp_r):=\int_{\Sp_r}d\sigma_{\lvert\Sp_r}>0\hspace{4pt},\qquad\quad\forall r>0
\end{equation}
In particular, one can express $\eta$ on circles as follows :
\begin{equation}
    \eta_{\lvert\Sp_r}=f_r(\theta)d\sigma_{\lvert\Sp_r}\hspace{4pt},\qquad\forall r>0\hspace{2pt},\quad\forall 0\leq\theta<2\pi
\end{equation}
Simple computations yields that $\lvert d\sigma\rvert_{\widetilde{\omega}}(z)=1$, $\forall z\in\C$. In particular, $\lvert\eta_{\lvert\Sp_r}\rvert_{\widetilde{\omega}}=\lvert f_r(\theta)\rvert$, for all $r>0$. Now, we measure the volume growth of the disks in $\C$. Using Stokes' theorem, we get for every $r>0$ :
\begin{equation}
    A_{\widetilde{\omega}}(\D_r)=\int_{\D_r}\widetilde{\omega}=\int_{\D_r}d\eta=\int_{\Sp_r}\eta=\int_{\Sp_r}f_r(\theta)d\sigma_{\lvert\Sp_r}\leq\int_{\Sp_r}\lvert\eta\rvert_{\widetilde{\omega}}d\sigma_{\lvert\Sp_r}
\end{equation}
This implies that for every $R>0$ :
\begin{equation}
    F(R):=\int_0^R A_{\widetilde{\omega}}(\D_t)dt\leq\int_0^R\left(\int_{\Sp_t}\lvert\eta\rvert_{\widetilde{\omega}}d\sigma_{\lvert\Sp_t}\right)dt=\int_{\D_R}\lvert\eta\rvert_{\widetilde{\omega}}\widetilde{\omega}
\end{equation}
We denote by $C_{2r}:=\{(x,y)\in\R^2\hspace{1pt}:\hspace{1pt}\max\{\lvert x\rvert,\lvert y\rvert\}\leq r\}$ the square of side length $2r>0$ with center the origin, and let $D$ be the unit square passing through the origin (which is a fundamental domain of $\C$ for the prescribed action of $\pi_1(X)$). Then obviously $\D_r\subset C_{2r}$ for any $r>0$, hence we get :
\begin{equation}
    F(R)\leq\int_{\D_R}\lvert\eta\rvert_{\widetilde{\omega}}\widetilde{\omega}\leq\int_{C_{2R}}\lvert\eta\rvert_{\widetilde{\omega}}\widetilde{\omega}\leq N_R\underset{\gamma\in\Gamma_R}{\sup}\int_{D+\gamma}\lvert\eta\rvert_{\widetilde{\omega}}\widetilde{\omega}\hspace{4pt},\qquad\forall R>0 \label{65}
\end{equation}
where $\Gamma_R=\{\gamma\in\pi_1(X)\hspace{1pt}:\hspace{1pt}(D+\gamma)\cap C_{2R}\neq\emptyset\}$, and $N_R=\lvert\Gamma_R\rvert$. Then $N_R=4\lceil R\rceil^2$, $\forall R>0$, where $\lceil\cdot\rceil$ is the \textit{upper integer part} function. On the other hand $\underset{\gamma\in\Gamma_R}{\sup}\int_{D+\gamma}\lvert\eta\rvert_{\widetilde{\omega}}\widetilde{\omega}\leq C$, for some constant $C>0$, since $\eta$ satisies the ($F_1$) property. But, we know that $A_{\widetilde{\omega}}(\D_t)=\pi t^2$, for any $t\geq0$, thus (\ref{65}) becomes :
\begin{equation}
    F(R)=\int_0^R\pi t^2dt=\dfrac{\pi}{3}R^3\leq4C\lceil R\rceil^2\hspace{4pt},\qquad\qquad\forall R>0
\end{equation}
This is a contradiction. Hence, $\widetilde{X}\not\simeq\C$, so $\widetilde{X}$ is necessarily biholomorphic to $\D$, which means that $X$ is Brody hyperbolic.
\end{proof}
Similar arguments can be used to show that complex tori $\C^n/\Lambda$ of any dimension $n$, are not $L^p$-Kähler hyperbolic for any $1\leq p\leq+\infty$. However, it is not obvious for us whether $L^p$-Kähler hyperbolicity still implies Brody hyperbolicity in higher dimensions.

At this point, it is also difficult to find examples of $L^p$-Kähler hyperbolic manifolds for $p<+\infty$ other than the Kähler hyperbolic ones.

\begin{qst}
    Let $X$ be an $L^p$-Kähler hyperbolic manifold. Is $X$ of general type ? \label{8}
\end{qst}
A potential way to answer this question is to prove the existence of a nonzero $L^q$-section of the pullback of $K_X$ to the universal cover $\widetilde{X}$, for some $q>0$ (Corollary 13.10, \cite{kollar1995shafarevich}).

A positive answer to Question \ref{8} implies that $K_X$ is in fact ample, since a Kähler $\widetilde{d}$-exact manifold of general type has ample canonical bundle thanks to Theorem 2.8, \cite{li2019kahler}. If one can show that $L^p$-Kähler hyperbolic manifolds are indeed Kobayashi/Brody hyperbolic, then answering Question \ref{8} means having a larger class of compact Kähler Kobayashi hyperbolic manifolds satisfying the Kobayashi conjecture \ref{63}.

\subsection{Strong Kähler hyperbolicity}
In this part, we will define a stronger hyperbolicity notion on compact Kähler manifolds which, despite the lack of examples, is interesting from the deformation theoretic point of view. We will start by recalling some basic definitions from \cite{gromov1987hyperbolic}.

Let ($M,d$) be a metric space, and fix a reference point $x_0\in M$. One defines the following \textit{inner product} on $M$ (also called the \textbf{Gromov product}) :
\begin{equation}
    (x,y)_{x_0}:=\dfrac{1}{2}(d(x,x_0)+d(y,x_0)-d(x,y))\hspace{4pt},\qquad\quad\forall x,y\in M \label{32}
\end{equation}
\begin{dfn} We say that $(M,d)$ is \textbf{hyperbolic} if there exists a reference point $x_0\in M$, one can find $\delta\geq 0$ such that the following inequality holds for the inner product defined in (\ref{32}):
\begin{equation}
    (x,y)_{x_0}\geq\min\{(x,z)_{x_0},(y,z)_{x_0}\}-\delta\hspace{4pt},\qquad\quad\forall x,y,z\in M \label{35}
\end{equation} \label{37}
\end{dfn}
\begin{rmk} (Corollary 1.1.B, \cite{gromov1987hyperbolic}) If the inequality (\ref{35}) is satisfied for some $x_0\in M$ with constant $\delta$, then a similar inequality holds for any base point $x\in M$ with constant $2\delta$. Hence, the hyperbolicity of ($M,d$) does not depend on the choice of the base point.
\end{rmk}
Now, let $\Gamma$ be a finite-type group and let $S\subset\Gamma$ be a finite set generating $\Gamma$. We define the \textbf{length} of an element $\gamma\in\Gamma$ with respect to $S$ as :
\begin{equation}
    \lVert\gamma\rVert_S:=\min\{ k\in\N\hspace{1pt}:\hspace{1pt}\gamma=s_1^{\varepsilon_1}\cdots s_k^{\varepsilon_k}\hspace{1pt},\hspace{1pt}s_i\in S\hspace{1pt},\hspace{1pt}\varepsilon_i=\pm1\hspace{1pt},\hspace{1pt}\forall1\leq i\leq k\} 
\end{equation}
This gives a distance on $\Gamma$ defined by :
\begin{equation}
    d_S(\gamma_1,\gamma_2):=\lVert\gamma_2^{-1}\gamma_1\rVert_S\hspace{4pt},\qquad\quad\forall\gamma_1,\gamma_2\in\Gamma 
\end{equation}
\begin{rmk}
    If $S$ and $S'$ are two finite sets generating a finite-type group $\Gamma$, then the metric spaces ($\Gamma,d_S$) and ($\Gamma,d_{S'}$) are quasi-isometric (see \cite{ghys1989groupes}, p.2 for instance). \label{44}
\end{rmk}
\begin{dfn}
    A finite-type group $\Gamma$ is \textbf{hyperbolic} $($or \textbf{Gromov hyperbolic}$)$ if $(\Gamma,d_S)$ is hyperbolic as a metric space in the sense of Definition \ref{37} for some finite generating set $S$ of $\Gamma$. \label{46}
\end{dfn}
More generally, one can associate to a finite-type group $\Gamma$ with finite generating set $S$ a \textbf{Cayley Graph} $\mathcal{G}(\Gamma,S)$. This is the graph whose vertices are the elements of $\Gamma$, and where an edge connects $\gamma_1$ to $\gamma_2$ if $d_S(\gamma_1,\gamma_2)$. This graph can be endowed with a natural metric, invariant under $\Gamma$, in which each edge is isometric to an interval of length 1 in $\R$, and such that the distance between two points is the shortest length of a path connecting them. Then the metric spaces ($\Gamma,d_S$) and $\mathcal{G}(\Gamma,S)$ are quasi-isometric by definition. In particular :
\begin{center}
    ($\Gamma,d_S$) is hyperbolic $\Longleftrightarrow$ $\mathcal{G}(\Gamma,S)$ is hyperbolic
\end{center}
\begin{rmk}
    Definition \ref{46} does not depend on the choice of the finite generating set of $\Gamma$ by Remark \ref{44}, since hyperbolicity is a quasi-isometry invariant (see \cite{ghys1989groupes}, Corollaire p.13 or \cite{ghys1990groupes}, Théorème 12).
\end{rmk}

There are several definitions for hyperbolicity in the literature, that can be found in the previously mentioned sources. For instance, the inequality (\ref{35}) ensures that on a \textit{geodesic} metric space ($M,d$) (for example, the Cayley graph of finite type group), all the triangles are \textbf{$\delta$-thin} for some constant $\delta\geq0$. This means that for any points $x,y,z\in M$, and any choice of segments $[x,y]$, $[y,x]$ and $[z,x]$, we have the following property : Any point of $[x,y]$ is at most at a distance $\delta\geq 0$ from some point in $[y,z]\cup[z,x]$. The proof of the equivalence of definitions for hyperbolicity can be found in detail in \cite{alonso1991notes}, Chapter 2.

\begin{dfn}
    We say that a compact Kähler manifold $X$ is \textbf{strongly Kähler hyperbolic} $($\textbf{SKH} for short$)$ if it is Kähler $\widetilde{d}$-exact and $\pi_1(X)$ is hyperbolic.
\end{dfn}
\begin{rmk}
    It is not hard to see that the Kähler $\widetilde{d}$-exactness condition is satisfied by $X$ if $\pi_2(X)$ is torsion (Remark 1.8, \cite{chen2021euler}). Indeed, the universal covering map induces an isomorphism of homotopy groups $\pi_k(\widetilde{X})\simeq\pi_k(X)$, $\forall k\geq 2$ since every map $f:\Sp^k\longrightarrow X$ with $k\geq 2$ lift to a map $\widetilde{f}:\Sp^k\longrightarrow\widetilde{X}$ by simple connectedness of $\Sp^k$ (which is not the case if $k=1$). In particular, $\pi_2(\widetilde{X})$ is torsion. This implies that $H_2(\widetilde{X},\Z)$ is torsion by the Hurewicz theorem ($H_2(\widetilde{X},\Z)\simeq\pi_2(\widetilde{X})$ since $\widetilde{X}$ is simply connected), which means that $H_2(\widetilde{X},\R)=0$. The universal coefficient theorem implies that $H^2(\widetilde{X},\R)=0$. By de Rham's theorem, we get that $H^2_{DR}(\widetilde{X},\R)=0$. This means that every closed $2$-form on $\widetilde{X}$ is exact. In particular, any Kähler metric $\omega$ on $X$ is $\widetilde{d}$-exact.
\end{rmk}

On the other hand, B-L. Chen and X. Yang proved that SKH manifolds are Kähler hyperbolic (Corollary 1.7). In particualr, SKH hyperbolicity is pinched between \textit{real hyperbolicity} (i.e. $\pi_1(X)$ is hyperbolic and $\pi_2(X)=0$) for compact Kähler manifolds and Kähler hyperbolicity. Namely, we have :
\begin{prop} Let $X$ be a compact Kähler manifold. Then :
    \begin{center}
  $X$ is real hyperbolic $\Longrightarrow$ $X$ is SKH $\Longrightarrow$ $X$ is Kähler hyperbolic  
\end{center} \label{11}
\end{prop}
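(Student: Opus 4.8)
The plan is to prove the two implications separately: the first is elementary and the second is the Chen--Yang theorem already advertised above, of which I would only sketch the mechanism.

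\textbf{First implication (real hyperbolic $\Rightarrow$ SKH).} Assume $X$ is real hyperbolic, i.e. $\pi_1(X)$ is hyperbolic and $\pi_2(X)=0$. The hyperbolicity of $\pi_1(X)$ is already one of the two conditions in the definition of SKH, so it remains to check that $X$ is Kähler $\widetilde d$-exact. Since $\pi_2(X)=0$ is in particular torsion, I would run exactly the argument of the Remark preceding the statement: the universal covering map gives $\pi_2(\widetilde X)\simeq\pi_2(X)$, so $\pi_2(\widetilde X)$ is torsion; by the Hurewicz theorem ($\widetilde X$ being simply connected) $H_2(\widetilde X,\Z)$ is torsion, hence $H_2(\widetilde X,\R)=0$; the universal coefficient theorem and de Rham's theorem then yield $H^2_{DR}(\widetilde X,\R)=0$, so every closed $2$-form on $\widetilde X$ is exact and, in particular, $\widetilde\omega=\pi^*\omega$ is exact for any Kähler metric $\omega$ on $X$. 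Thus $X$ is Kähler $\widetilde d$-exact, hence SKH.

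\textbf{Second implication (SKH $\Rightarrow$ Kähler hyperbolic).} Here $X$ is Kähler $\widetilde d$-exact with $\pi_1(X)$ hyperbolic; fix a Kähler metric $\omega$ with $\widetilde\omega=d\eta$ for some smooth $1$-form $\eta$ on $\widetilde X$ (legitimate by the Remark after the $\widetilde d$-exactness definition). The goal is to replace $\eta$ by a $\widetilde\omega$-bounded primitive. The form $\widetilde\omega$ is itself $\widetilde\omega$-bounded, being the pullback of a form on the compact base $X$, so $\widetilde X$ carries bounded geometry. By the Milnor--\v{S}varc lemma, $(\widetilde X,d_{\widetilde\omega})$ is quasi-isometric to the Cayley graph of $\pi_1(X)$, which is Gromov hyperbolic by hypothesis; combined with bounded geometry this gives a linear isoperimetric (filling) inequality for $1$-cycles on $\widetilde X$. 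Gromov's bounded-primitive principle on such spaces --- equivalently the surjectivity, in degrees $\geq 2$, of the comparison map from bounded to ordinary cohomology for hyperbolic groups, transferred to the de Rham complex of $\widetilde X$ by a bounded-geometry smoothing argument --- then produces a bounded $1$-form $\eta_0$ with $d\eta_0=\widetilde\omega$. Hence $\omega$ is $\widetilde d$-bounded and $X$ is Kähler hyperbolic. This is precisely Chen--Yang's Corollary 1.7, which I would cite for the technical execution.

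\textbf{Main obstacle.} The whole content is in the second implication, and within it the delicate point is converting the combinatorial linear isoperimetric inequality for the hyperbolic group $\pi_1(X)$ into the existence of a \emph{smooth} primitive of $\widetilde\omega$ that is bounded in the $\widetilde\omega$-norm: quasi-isometries do not act on differential forms, so one must smooth combinatorial fillings and control their pointwise norms using the bounded geometry of $\widetilde X$. Since this is carried out in detail by Chen and Yang, I would invoke their result rather than reproduce it. As a consistency check one can note that compact Kähler manifolds homotopy equivalent to negatively curved Riemannian manifolds are real hyperbolic and were already known (Gromov) to be Kähler hyperbolic, in agreement with the chain.
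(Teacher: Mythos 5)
Your proposal is correct and follows essentially the same route as the paper, which gives no separate proof: the first implication is exactly the paper's preceding remark (hyperbolicity of $\pi_1(X)$ is common to both definitions, and $\pi_2(X)=0$ forces $H^2_{DR}(\widetilde{X},\R)=0$ via Hurewicz, universal coefficients and de Rham, hence Kähler $\widetilde{d}$-exactness), while the second implication is delegated to Chen--Yang's Corollary 1.7, just as the paper does. Your added sketch of the bounded-cohomology mechanism behind Chen--Yang is a reasonable gloss but is not part of the paper's argument.
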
 

The following examples were kindly communicated to the author by P. Eyssidieux.
\begin{ex}
    \begin{enumerate}
        \item The only known class of examples of Kähler groups (fundamental groups of compact Kähler manifolds) that are hyperbolic is the class of fundamental groups of negatively curved Kähler manifolds. These manifolds are known to be Kähler hyperbolic as mentioned above (hence Kähler $\widetilde{d}$-exact). In particular, these are SKH manifolds.
        \item In complex dimension 1, Kähler hyperbolicity and the SKH property are clearly the same. However, this does not hold in higher dimensions (that is the left implication in Proposition \ref{11} is strict). An example of a Kähler hyperbolic manifold which is not SKH is the following : Let $X=C_1\times C_2$ be the product of two smooth curves of genus at least 2. It is obvious that $X$ is Kähler hyperbolic since both $C_1$ and $C_2$ are Kähler hyperbolic. But $\pi_1(X)$ contains $\Z^2$, which means that for any $\delta\geq0$, there are triangles that are not \textbf{$\delta$-thin} in $\pi_1(X)$. Hence $\pi_1(X)$ is not Gromov hyperbolic and $X$ is not SKH.
    \end{enumerate}
\end{ex}

\section{Deformations openness results}

This section will be dedicated to the study of the deformations of some of the notions mentioned above. Let us first recall some fundamental results in deformation theory :
\begin{dfn}
    A \textbf{holomorphic family of deformations} of a compact complex manifold $X$ is a complex analytic family $(\mathscr{X},B,\sigma)$, where $\mathscr{X}$ and $B$ are compact complex manifolds and $\sigma:\mathscr{X}\longrightarrow B$ is a proper holomorphic submersion such that $X_0:=\sigma^{-1}(0)=X$, for a fixed base point $0\in B$ $(B$ is in general an open ball around the origin in some $\C^N)$. 
\end{dfn}
The study of deformation openness for Kähler $\widetilde{d}$-exact and $L^p$-Kähler hyperbolic manifolds is motivated by two facts :

The first fact is that the Kähler property is open under holomorphic deformations :
\begin{thm}$($\cite{kodaira1960deformations}, Theorem 15$)$ If a fiber $X_0$ in a holomorphic family of compact complex manifolds is Kähler, then all the nearby fibers $X_t$ are also Kähler. Moreover, any Kähler metric $\omega_0$ on $X_0$ can be deformed to a $\mathscr{C}^\infty$ family of
Kähler metrics $\omega_t$ on the nearby fibers $X_t$. \label{45}
\end{thm}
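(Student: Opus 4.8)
The plan is to deduce this via Hodge theory on the family, in the spirit of Kodaira--Spencer. The first step is Ehresmann's fibration theorem: after shrinking $B$, the submersion $\sigma$ is $\mathscr{C}^\infty$-trivial, so I may identify all fibers with a single compact differentiable manifold $M$ carrying a smooth family of integrable complex structures $\{J_t\}_{t\in B}$, with $J_0$ the given one. Under this identification $H^2_{DR}(X_t,\R)\cong H^2_{DR}(M,\R)$ is independent of $t$, the Kähler class $\{\omega_0\}$ corresponds to a fixed class $\mathfrak{a}\in H^2_{DR}(M,\R)$, and the Betti number $b_2(M)$ is a topological constant.

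The second step produces a smoothly varying representative of $\mathfrak{a}$ that is positive. Choose a $\mathscr{C}^\infty$ family of Hermitian metrics $g_t$ on $X_t$ (for instance the restrictions to the fibers of one fixed Hermitian metric on $\mathscr{X}$), normalized so that $g_0$ is the metric underlying $\omega_0$. Let $\Delta_t=dd_t^{\ast}+d_t^{\ast}d$ be the associated Hodge--de Rham Laplacian on $2$-forms and $H_t$ the $L^2_{g_t}$-orthogonal projection onto $\ker\Delta_t$. Since $\dim\ker\Delta_t=b_2(M)$ is independent of $t$, standard elliptic theory (continuity of the Riesz projection for a smooth family of elliptic operators whose kernel has locally constant dimension) shows that $t\mapsto H_t$ is smooth in the $\mathscr{C}^\infty$-topology. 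Put $\omega_t:=H_t(\omega_0)$. Then $\omega_t$ is a real $d$-closed $2$-form representing $\mathfrak{a}$ on $X_t$, one recovers $\omega_0$ at $t=0$ (as $\omega_0$ is already $g_0$-harmonic), and $\omega_t\to\omega_0$ in every $\mathscr{C}^k$-norm. Positivity then comes for free, since ``being a positive $(1,1)$-form'' is an open condition on the pair (2-form, complex structure) and $(\omega_t,J_t)\to(\omega_0,J_0)$.

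The third and decisive step is to show that for $t$ near $0$ the closed form $\omega_t$ is of pure type $(1,1)$ with respect to $J_t$; a priori it is only a closed real $2$-form, and on a non-Kähler $X_t$ harmonic $2$-forms need not be pure-type — this is the \emph{main obstacle}. The tool is semicontinuity of Hodge numbers: the $\bar\partial$-Hodge numbers $h^{p,q}(X_t)=\dim H^{p,q}_{\bar\partial}(X_t)$ are upper semicontinuous in $t$, while the Frölicher inequality gives $b_k(M)\le\sum_{p+q=k}h^{p,q}(X_t)$. Applying this in degree $2$ and using that $X_0$ is Kähler (so $b_2=\sum_{p+q=2}h^{p,q}(X_0)$) forces $\sum_{p+q=2}h^{p,q}(X_t)=b_2$ with $h^{p,q}(X_t)=h^{p,q}(X_0)$ for all $p+q=2$ and $t$ near $0$; in particular the Frölicher spectral sequence of $X_t$ degenerates in degree $2$ and $h^{2,0}(X_t)=h^{0,2}(X_t)$. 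From this one deduces that $\mathfrak{a}$ is represented on $X_t$ by a closed real $(1,1)$-form and, tracking the construction through the smooth harmonic projection, that $\omega_t$ itself (or a small $d$-exact modification of it, still smooth in $t$) is such a form. Combining the three steps: for $t$ close to $0$, $\omega_t$ is a Kähler form on $X_t$, depending smoothly on $t$, with $\omega_{t}|_{t=0}=\omega_0$, which is the assertion. The hard part is precisely this bidegree control; the alternative of directly correcting the $(1,1)$-part of $\omega_0$ by a $\partial_t\bar\partial_t$-potential fails, since it would require a $\partial\bar\partial$-lemma on $X_t$ which is not available a priori, so the cohomological route seems unavoidable.
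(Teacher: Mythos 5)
Your Steps 1 and 2 are fine as far as they go, but Step 3 --- which you correctly identify as the decisive one --- contains a genuine gap, and the cohomological route you choose cannot close it. Upper semicontinuity of $h^{p,q}$ together with the Fr\"olicher inequality does give $h^{p,q}(X_t)=h^{p,q}(X_0)$ for $p+q=2$ and the degeneration of the Fr\"olicher spectral sequence in that degree, but degeneration only identifies the \emph{graded} pieces $E_\infty^{p,q}=F^pH^2/F^{p+1}H^2$ with $H^{p,q}_{\bar\partial}(X_t)$; it yields neither a direct-sum Hodge decomposition, nor Hodge symmetry, nor the statement that a real de Rham class admits a $d$-closed real $(1,1)$-representative. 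That last statement is essentially of $\partial\bar\partial$-lemma type (it amounts to the class lying in the image of $H^{1,1}_{BC}(X_t)\longrightarrow H^2_{DR}(X_t)$, which Fr\"olicher degeneration does not control), and you have already conceded that no $\partial\bar\partial$-lemma is available on $X_t$ a priori. Concretely: your representative $\omega_t=H_t(\omega_0)$ is built from the second-order de Rham Laplacian, which on a non-K\"ahler Hermitian manifold does not interact with the bidegree decomposition, so there is no reason for $(\omega_t)^{2,0}_{J_t}+(\omega_t)^{0,2}_{J_t}$ to be $d$-exact, let alone exact with a small potential varying smoothly in $t$. ``Tracking the construction through the smooth harmonic projection'' is precisely the step that is missing, not a step that has been carried out.

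The standard repair --- and the one this paper itself adapts in the proof of Theorem \ref{62} --- replaces the de Rham Laplacian by the fourth-order Bott--Chern Laplacian. By Lemma \ref{34}, a Hermitian metric $\omega$ is K\"ahler if and only if $\Delta_{BC}\omega=0$, so one takes the $(1,1)_{\mathcal{J}_t}$-component of $\omega_0$ and projects it onto $\mathcal{H}^{1,1}_{BC}(X_t)$. The point is then to show that $\dim\mathcal{H}^{1,1}_{BC}(X_t)$ is locally constant near $0$ (upper semicontinuity for the smooth family of elliptic operators $\Delta_{BC,t}$, combined with a lower bound supplied by the K\"ahler structure at $t=0$; this is the content of Proposition 8 in \cite{kodaira1960deformations} and of Theorem 2.6.4 in \cite{popovicinon}), so that the projection varies smoothly with $t$; at $t=0$ it returns $\omega_0$, whence $d$-closedness, pure type and positivity for small $t$ all hold by construction and openness. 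Your outline would become a proof if you substituted this fourth-order harmonic projection for your Step 3; as written, the bidegree control is asserted rather than established.
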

And the second fact is that deformation openness results have been proved for some complex hyperbolicity notions, namely : 
\begin{thm}$($\cite{brody1978compact}, Theorem 3.1$)$ The Kobayashi hyperbolicity property is open under holomorphic deformations.
\end{thm}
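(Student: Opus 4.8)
The plan is to run the classical reparametrization-plus-normal-families argument of Brody, carried out in the family. First I would shrink $B$ if necessary: pick a ball $B' \Subset B$ with $0 \in B'$, so that by properness of $\sigma$ the total space $\mathscr{X}' := \sigma^{-1}(\overline{B'})$ is compact; fix once and for all a Hermitian metric $h$ on $\mathscr{X}'$ and equip each fiber $X_t$ with the \emph{induced} metric $h_t := h|_{X_t}$, the point being that for a vector $v$ tangent to $X_t$ one has $\|v\|_{h_t} = \|v\|_h$. Then I would suppose, toward a contradiction, that $X_0$ is Kobayashi hyperbolic while there is a sequence $t_k \to 0$ in $B'$ with $X_{t_k}$ \emph{not} Kobayashi hyperbolic.

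The key step is the following. By the Brody characterization recalled above, each fiber $X_{t_k}$ (compact, being a fiber of a proper submersion) admits a non-constant entire curve $\C \to X_{t_k}$, and Brody's reparametrization lemma, applied inside the compact Hermitian manifold $(X_{t_k}, h_{t_k})$, upgrades it after an affine change of variable on $\C$ to a Brody curve $g_k : \C \to X_{t_k}$ with
\[
\|g_k'(0)\|_{h_{t_k}} = 1 \qquad\text{and}\qquad \|g_k'(z)\|_{h_{t_k}} \le 1 \quad\text{for all } z \in \C .
\]
Since $h_{t_k}$ is the restriction of $h$, this says $\|g_k'(z)\|_h \le 1$ for all $z$, so each $g_k$ is $1$-Lipschitz as a map from $\C$ into the compact metric space $(\mathscr{X}', d_h)$. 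By Arzelà–Ascoli a subsequence converges, uniformly on compacta, to a map $g : \C \to \mathscr{X}'$; reading this in local holomorphic charts and using the Weierstrass convergence theorem together with Cauchy estimates, $g$ is holomorphic and $g_k' \to g'$ locally uniformly. Because $\sigma \circ g_k \equiv t_k \to 0$ and $\sigma$ is continuous, $g$ takes values in $X_0 = \sigma^{-1}(0)$. Finally $g_k(0) \to g(0) \in X_0$ and $h$ is smooth, so $\|g'(0)\|_{h_0} = \lim_k \|g_k'(0)\|_{h_{t_k}} = 1$; in particular $g$ is a non-constant entire curve in $X_0$, contradicting (again via Brody's characterization) the Kobayashi hyperbolicity of $X_0$. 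Hence $X_t$ is Kobayashi hyperbolic for all $t$ close to $0$.

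The only ingredient here that is not bookkeeping is the \emph{uniform} use of Brody's reparametrization lemma: it is applied fiber by fiber in the fixed metrics $h_{t_k}$, and the normalization $\sup_\C \|g_k'\|_{h_{t_k}} = \|g_k'(0)\|_{h_{t_k}} = 1$ it produces is exactly what survives the limit and forces the limiting curve in $X_0$ to be non-constant — the place where one would otherwise lose the curve is precisely a failure of this normalization. I would also stress, as in Brody's original proof, that no holomorphic, or even continuous, dependence of the $g_k$ on $t$ is used; one only needs the $C^0$-convergence of the fibers $X_{t_k}$ to $X_0$ inside the fixed compact ambient space $(\mathscr{X}', h)$ and the continuity of the single reference metric $h$.
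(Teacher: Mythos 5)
Your argument is correct: it is precisely Brody's original normal-families proof of this statement (reparametrize entire curves in the nearby fibers into Brody curves with respect to a single ambient Hermitian metric, extract a locally uniform limit which lands in the central fiber and is non-constant because the normalization $\lVert g_k'(0)\rVert=1$ survives the limit, and contradict Brody hyperbolicity of $X_0$). The paper does not reprove this theorem --- it only cites \cite{brody1978compact} --- so your proof coincides with the one in the cited reference and there is nothing further to compare.
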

\begin{thm}$($\cite{ma2024strongly}, Theorem 2.4$)$ The strongly Gauduchon hyperbolicity property is open under holomorphic deformations. \label{10}
\end{thm}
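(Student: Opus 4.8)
The plan is to reduce everything to transporting along the $C^\infty$-trivialization the data witnessing sG hyperbolicity of $X_0$. By Ehresmann's fibration theorem, after shrinking $B$ we may assume $\sigma$ is $C^\infty$-trivial, so there are diffeomorphisms $f_t\colon X_0\to X_t$ depending smoothly on $t$ with $f_0=\mathrm{id}$; since each $f_t$ induces an isomorphism $\pi_1(X_0)\to\pi_1(X_t)$, it lifts to a diffeomorphism $\widetilde{f_t}\colon\widetilde{X_0}\to\widetilde{X_t}$ with $\pi_t\circ\widetilde{f_t}=f_t\circ\pi_0$. Write $\omega_0$ for an sG metric on $X_0$ witnessing sG hyperbolicity: there is a real $d$-closed $(2n-2)$-form $\Omega_0$ on $X_0$ whose $(n-1,n-1)$-component equals $\omega_0^{n-1}$, with $\pi_0^*\Omega_0=d\eta_0$ for a $\widetilde{\omega_0}$-bounded $(2n-3)$-form $\eta_0$ on $\widetilde{X_0}$. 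I would then simply set $\Omega_t:=(f_t^{-1})^*\Omega_0$ on $X_t$; this is automatically real and $d$-closed.

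The next step is to check that the metric canonically attached to $\Omega_t$ is sG for $t$ near $0$. Let $\beta_t$ be the $(n-1,n-1)$-component of $\Omega_t$ with respect to the complex structure $J_t$ of $X_t$. Pulling back by $f_t$ turns $\beta_t$ into the $(n-1,n-1)$-component of the fixed form $\Omega_0$ with respect to the complex structure $f_t^*J_t$ on $X_0$; since $f_t^*J_t\to J_0$ as $t\to 0$ and at $t=0$ this component is the strictly positive form $\omega_0^{n-1}$, openness of strict positivity together with compactness of $X_0$ gives $\beta_t>0$ for $t$ small. By Michelsohn's bijection between Hermitian metrics and strictly positive $(n-1,n-1)$-forms there is then a unique Hermitian metric $\omega_t$ on $X_t$ with $\omega_t^{n-1}=\beta_t$. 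Decomposing $d\Omega_t=0$ by bidegree yields $\pd\beta_t=-\db\,\Omega_t^{(n,n-2)}$, so $\pd\omega_t^{n-1}$ is $\db$-exact and $\omega_t$ is an sG metric.

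The final, and I expect most delicate, step is boundedness on the universal cover. From $f_t^{-1}\circ\pi_t=\pi_0\circ\widetilde{f_t}^{-1}$ one gets $\pi_t^*\Omega_t=(\widetilde{f_t}^{-1})^*\pi_0^*\Omega_0=d\big((\widetilde{f_t}^{-1})^*\eta_0\big)$, so $\eta_t:=(\widetilde{f_t}^{-1})^*\eta_0$ is a $d$-primitive of $\pi_t^*\Omega_t$, and it remains to see it is bounded with respect to $\widetilde{\omega_t}$. Here I would use that $\pi_0$ and $\pi_t$ are local isometries for the pulled-back metrics, so the operator norm of $d\widetilde{f_t}^{-1}$ at a point equals the operator norm of $df_t^{-1}$ at its image in $X_t$, hence is $\leq\sup_{X_t}\|df_t^{-1}\|<\infty$ (and symmetrically for $d\widetilde{f_t}$); thus $\widetilde{f_t}^{-1}$ is bi-Lipschitz and $(\widetilde{f_t}^{-1})^*$ carries forms bounded for $\widetilde{\omega_0}$ to forms bounded for $\widetilde{\omega_t}$. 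Boundedness of $\eta_0$ with respect to $\widetilde{\omega_0}$ is the same as boundedness with respect to the pullback of any Hermitian metric on the compact $X_0$, so comparing metrics causes no trouble. Consequently $\omega_t$, together with $\Omega_t$ and $\eta_t$, witnesses sG hyperbolicity of $X_t$, which is the asserted openness.

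The main obstacle is concentrated in this last step: one must guarantee that the $d$-primitive survives the passage to the universal cover \emph{with its boundedness intact}, which is exactly what the equivariance of $\widetilde{f_t}$ and the isometry property of the covering maps deliver; the bidegree bookkeeping showing that $d$-closedness of $\Omega_t$ forces the sG condition on $\omega_t$ is the other point needing (routine) care. Note that nothing above uses the structure of $X_0$ beyond what is packaged in the triple $(\omega_0,\Omega_0,\eta_0)$, so the same argument would apply to any "metric-class hyperbolicity" defined through a $d$-closed form with a prescribed strictly positive component and a bounded lifted $d$-potential.
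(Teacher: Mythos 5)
This theorem is not proved in the paper at all: it is quoted verbatim from [Ma24, Theorem 2.4] as background motivation, so there is no in-paper proof to compare against. Judged on its own, your argument is correct and is the natural transport-of-structure proof one expects for this kind of statement (and it is in the same spirit as the deformation arguments the paper does carry out in Section 4, e.g.\ for Theorem \ref{62}): push the closed $(2n-2)$-form $\Omega_0$ forward along the Ehresmann trivialization, observe that its $(n-1,n-1)_{J_t}$-component stays strictly positive for small $t$ by continuity of $f_t^*J_t$ and compactness, invoke Michelsohn's bijection to produce $\omega_t$, read off the sG condition from the $(n,n-1)$-component of $d\Omega_t=0$, and control the lifted primitive via the fact that $\widetilde{f_t}$ is bi-Lipschitz because the covering maps are local isometries and $X_0$, $X_t$ are compact. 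All the steps that need care are present: the bidegree bookkeeping $\pd\omega_t^{n-1}=-\db\,\Omega_t^{(n,n-2)}$ is right, and the pointwise estimate $\lvert(\widetilde{f_t}^{-1})^*\eta_0\rvert_{\widetilde{\omega_t}}\leq\sup\lVert d\widetilde{f_t}^{-1}\rVert^{2n-3}\,\lvert\eta_0\rvert_{\widetilde{\omega_0}}\circ\widetilde{f_t}^{-1}$ does give the required uniform bound (note the paper's displayed definition of sG hyperbolicity contains a typo, $\pi^*\Omega=\eta$ instead of $\pi^*\Omega=d\eta$; you read it the intended way). The only cosmetic remark is that one should say explicitly that $\Omega_t$ having a strictly positive $(n-1,n-1)$-part is what replaces the current-theoretic argument Popovici uses for openness of the plain sG property; in the hyperbolic setting the closed form $\Omega_0$ is handed to you, so that machinery is not needed.
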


Recall that by Ehresmann’s Theorem \cite{ehresmann1947espaces}, every holomorphic family of compact complex manifolds is locally $\mathscr{C}^\infty$ trivial, i.e. the neighboring fibers are diffeomorphic. In particular, for every degree $k$ we have : $H^k_{DR}(X_t,\C)\simeq H^k_{DR}(X_0,\C)$, for $t$ close to 0 (and for all $t\in B$ if $B$ is contractible). So it makes sense to define the following :
\begin{dfn}
    Let ($X,\omega)$ be a compact Kähler manifold. We say that a deformation $X_t=\sigma^{-1}(t)$ of $X$ is \textbf{polarized} by $\{\omega\}_{BC}$ if $\{\omega\}_{DR}\in\hspace{0.5pt}H_{DR}^2(X)$ defines a K{\"a}hler class $($hence remains of type $(1,1))$ for the complex structure of $X_t$.
\end{dfn}

A direct application of Remark \ref{26} and Corollary \ref{5} yields our first deformation openness result regarding \textbf{polarized deformations} :
\begin{prop} Let $X$ be a compact Kähler manifold and $(\mathscr{X},B,\sigma)$ be a holomorphic family of deformations of $X$.
\begin{enumerate}
    \item If $X$ is Kähler $\widetilde{d}$-exact, then so is every fiber $X_t$ polarized by the fixed Kähler $\widetilde{d}$-exact class on $X$.
    \item If $X$ is $L^p$-Kähler hyperbolic for some $1\leq p\leq+\infty$, then so is every fiber $X_t$ polarized by the fixed $L^p$-Kähler hyperbolic class on $X$.
\end{enumerate}
\end{prop}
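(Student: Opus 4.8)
The plan is to exploit the fact, already recorded in Remark \ref{26}.(2) and Corollary \ref{5}, that the property of admitting a $\widetilde{d}$-exact (resp. $(F_p)$-potential) representative is a property of the de Rham cohomology class $\{\omega\}_{DR}\in H^2_{DR}(X,\R)$ rather than of the particular Kähler form. So the whole argument is a transport-of-structure across the diffeomorphism supplied by Ehresmann's theorem, combined with this cohomological invariance.

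First I would fix the $\widetilde{d}$-exact (resp. $L^p$-Kähler hyperbolic) Kähler class $\{\omega\}_{DR}$ on $X=X_0$, and shrink $B$ so that $\sigma$ is $\mathscr{C}^\infty$-trivial: there is a diffeomorphism $\Phi:\mathscr{X}\to X_0\times B$ over $B$, giving diffeomorphisms $f_t:X_t\xrightarrow{\sim}X_0$ for each $t\in B$. Under $f_t^*$ the class $\{\omega\}_{DR}$ pulls back to a class $\{\omega_t\}_{DR}\in H^2_{DR}(X_t,\R)$; by hypothesis the deformation is polarized by $\{\omega\}_{BC}$, which by definition means precisely that this class is of type $(1,1)$ and Kähler on $X_t$. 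Pick any Kähler form $\omega_t$ representing it. Next I would compare universal covers: since $f_t$ is a diffeomorphism it induces an isomorphism $\pi_1(X_t)\simeq\pi_1(X_0)$, hence lifts to a diffeomorphism $\widetilde{f_t}:\widetilde{X_t}\to\widetilde{X_0}$ with $\pi_{X_0}\circ\widetilde{f_t}=f_t\circ\pi_{X_t}$. Then $\widetilde{f_t}^*(\pi_{X_0}^*\omega)$ and $\pi_{X_t}^*\omega_t$ are cohomologous closed $2$-forms on $\widetilde{X_t}$ — indeed $\widetilde{f_t}^*\pi_{X_0}^*\omega=\pi_{X_t}^*f_t^*\omega$ and $f_t^*\omega$ is cohomologous to $\omega_t$ on $X_t$, a relation that pulls back to $\widetilde{X_t}$.

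Now if $\pi_{X_0}^*\omega=d\eta$ with $\eta$ satisfying $(F_p)$ on $\widetilde{X_0}$, then $\widetilde{f_t}^*\eta$ is a $d$-potential of $\widetilde{f_t}^*\pi_{X_0}^*\omega$ on $\widetilde{X_t}$. One still has to check that $\widetilde{f_t}^*\eta$ has the $(F_p)$ property on $\widetilde{X_t}$: this follows because $\widetilde{f_t}$ is equivariant for the $\pi_1$-isomorphism, so it maps a fundamental domain of $\widetilde{X_t}$ into a finite union of translates of a fundamental domain of $\widetilde{X_0}$, and since $f_t$ (hence $\widetilde{f_t}$) has bounded differential in the relevant metrics by compactness of $X_0$ and $X_t$, the $L^p$-norms over fundamental domains are comparable up to a constant independent of $\gamma$ — the same bookkeeping as in Remark \ref{19}.(2). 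Finally, Corollary \ref{5} (applied on $X_t$) promotes this to the statement that $\pi_{X_t}^*\omega_t$, being cohomologous, also admits a $d$-potential with the $(F_p)$ property, so $\omega_t$ is an $L^p$-Kähler hyperbolic metric on $X_t$; for part (1) one just drops the $(F_p)$ condition and uses Remark \ref{26}.(2) directly.

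The only genuinely delicate point is making the $(F_p)$ estimate survive pullback by $\widetilde{f_t}$: one must be careful that the diffeomorphism $\widetilde{f_t}$ is a \emph{quasi-isometry} (uniformly, because it descends to the compact $X_t$) and $\pi_1$-equivariant, so that fundamental domains and their finite-overlap combinatorics are preserved; everything else is the cohomological-invariance machinery already set up in Remark \ref{26}, Corollary \ref{5}, and the metric-independence discussion in Remark \ref{19}. Note this is a statement only about \emph{polarized} deformations — no Hodge theory or hard analysis is needed, in contrast to the deformation results of the Main Theorem for arbitrary nearby fibers.
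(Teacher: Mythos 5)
Your proposal is correct and follows essentially the same route as the paper, which justifies this proposition in one line as ``a direct application of Remark \ref{26} and Corollary \ref{5}'': you identify $X_t$ with $X_0$ smoothly via Ehresmann, observe that the polarized class has a representative whose lift admits a $d$-potential with the $(F_p)$ property, and invoke the cohomological invariance of Corollary \ref{5} (together with the metric- and fundamental-domain-independence of $(F_p)$ from Remark \ref{19}) to transfer this to any Kähler representative for the complex structure $J_t$. Your explicit bookkeeping with the equivariant lift $\widetilde{f_t}$ is exactly the content the paper leaves implicit, so there is nothing to object to.
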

\begin{rmk}
    If moreover $X$ has a Gromov hyperbolic fundamental group in the first part of the previous proposition, i.e. $X$ is SKH, then the polarized fibers $X_t$ are also SKH since $X_t\underset{\mathscr{C}^\infty}{\simeq}X$, in particular $\pi_1(X_t)\simeq\pi_1(X)$.
    \label{51}
\end{rmk}
\subsection{Deformation of \texorpdfstring{$L^p$}{Lp}-Kähler hyperbolic manifolds}
Now we are going to deal with this deformation openness problem in its full generality. But before that, let us recall one important characterization of Kähler metrics. The following material can be found in \cite{kodaira1960deformations} or \cite{schweitzer2007autour} (see also \cite{popovici2015aeppli} for some proofs).
\begin{dfn} We define the $4^{th}$ order \textbf{Bott-Chern Laplacian} as follows :
\begin{equation*}
    \Delta_{BC}=\partial^*\partial+\bar\partial^*\bar\partial+(\partial\bar\partial)^*(\partial\bar\partial)+(\partial\bar\partial)(\partial\bar\partial)^*+(\partial^*\bar\partial)^*(\partial^*\bar\partial)+(\partial^*\bar\partial)(\partial^*\bar\partial)^*
\end{equation*}
\end{dfn}
This Laplacian is elliptic and formally self-adjoint, so it induces the following orthogonal decomposition on any compact Hermitian manifold $X$ for any $r,s\in\{0,\dots,n\}$ : 
\begin{equation}
    \mathscr{C}^{\infty}_{r,s}(X)=\mathcal{H}_{BC}^{r,s}(X)\oplus\partial\bar\partial(\mathscr{C}^{\infty}_{r-1,s-1}(X))\oplus(\partial^*(\mathscr{C}^{\infty}_{r+1,s}(X)+\bar\partial^*(\mathscr{C}^{\infty}_{r,s+1}(X))
\end{equation}
where $\mathcal{H}_{BC}^{r,s}(X)=\text{Ker}\Delta_{BC}\cap\mathscr{C}^\infty_{r,s}(X)$ has the following decomposition :
\begin{equation}
    \text{Ker}\Delta_{BC}=\text{Ker}\partial\cap\text{Ker}\bar\partial\cap\text{Ker}(\partial\bar\partial)^*=\text{Ker}\partial\cap\text{Ker}\bar\partial\cap\text{Ker}(\partial\bar\partial)^* \label{47}
\end{equation}
Hence we have the following :
\begin{lem}$($\cite{popovicinon}, Lemma 2.6.5$)$ Let $\omega$ be a Hermitian metric on a compact complex manifold $X$. Then :
\begin{equation}
    \omega\text{ is Kähler }\Longleftrightarrow \Delta_{BC}\omega=0    
\end{equation} \label{34} \vspace{-18pt}
\end{lem}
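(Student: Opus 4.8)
The statement to be proved is Lemma \ref{34}: a Hermitian metric $\omega$ on a compact complex manifold $X$ is Kähler if and only if $\Delta_{BC}\omega = 0$. The plan is to exploit the decomposition (\ref{47}) of $\ker\Delta_{BC}$, which asserts that $\Delta_{BC}\omega = 0$ is equivalent to the three simultaneous conditions $\partial\omega = 0$, $\bar\partial\omega = 0$, and $(\partial\bar\partial)^*\omega = 0$. The direction ``$\omega$ Kähler $\Rightarrow \Delta_{BC}\omega = 0$'' is then immediate: if $d\omega = 0$ then $\partial\omega = \bar\partial\omega = 0$ by bidegree reasons, and consequently $\partial\bar\partial\omega = 0$ as well, so certainly $(\partial\bar\partial)^*\omega = 0$ is not what we get directly — rather, I would note that $\omega \in \ker\partial \cap \ker\bar\partial$ already forces all six nonnegative terms in $\langle\Delta_{BC}\omega,\omega\rangle$ to vanish except possibly $(\partial\bar\partial)^*(\partial\bar\partial)$ and $(\partial^*\bar\partial)(\partial^*\bar\partial)^*$; but $\partial\bar\partial\omega = \partial(\bar\partial\omega) = 0$ kills the first, and $\bar\partial^*\omega$ appears in $(\partial^*\bar\partial)^*(\partial^*\bar\partial)$... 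Here it is cleaner to just invoke (\ref{47}) directly: a Kähler $\omega$ lies in $\ker\partial\cap\ker\bar\partial$, and since $\partial\bar\partial\omega = 0$ one has $\omega \perp \operatorname{Im}(\partial\bar\partial)$ is not needed — instead $(\partial\bar\partial)^*\omega$: we want $\langle(\partial\bar\partial)^*\omega, \xi\rangle = \langle \omega, \partial\bar\partial\xi\rangle$, and this need not vanish. So the correct route is: $\Delta_{BC}\omega = 0 \iff \omega\in\ker\partial\cap\ker\bar\partial\cap\ker(\partial\bar\partial)^*$ by (\ref{47}), and for the easy direction I use that a Kähler form is $d$-closed hence $\partial$- and $\bar\partial$-closed, and then check $(\partial\bar\partial)^*\omega = 0$ separately.

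For the easy direction I would argue $(\partial\bar\partial)^*\omega = 0$ as follows: since $\omega$ is a real $(1,1)$-form, $(\partial\bar\partial)^*\omega$ is a $(0,0)$-form, i.e. a function; pairing it against a test function $f$ gives $\langle (\partial\bar\partial)^*\omega, f\rangle = \langle \omega, \partial\bar\partial f\rangle = \langle \omega, f\, \partial\bar\partial(\text{stuff})\rangle$ — more simply, $\langle\omega,\partial\bar\partial f\rangle = \int_X \omega \wedge \overline{\star(\partial\bar\partial f)}$, and by the Kähler identities (or just integrating by parts twice using $d\omega=0$) this equals $\langle \partial^*\bar\partial^*\omega, f\rangle$ and $\bar\partial\omega = 0 \Rightarrow \bar\partial^*$ applied after conjugation vanishes — the bottom line being a short integration-by-parts computation using $\partial\omega = \bar\partial\omega = 0$. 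For the hard direction, suppose $\Delta_{BC}\omega = 0$; then by (\ref{47}) we get $\partial\omega = 0$ (and $\bar\partial\omega = \overline{\partial\omega} = 0$ automatically since $\omega$ is real); but $\partial\omega$ is a $(2,1)$-form and $\bar\partial\omega$ a $(1,2)$-form, and a real $3$-form $d\omega$ decomposes as $\partial\omega + \bar\partial\omega$, so $d\omega = 0$, i.e. $\omega$ is Kähler. The role of the third condition $(\partial\bar\partial)^*\omega = 0$ is then seen to be automatically implied (it is redundant given $\partial\omega = \bar\partial\omega = 0$ on a real form), which is consistent with the easy direction.

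The main subtlety — and the only place requiring care — is verifying the decomposition (\ref{47}), namely that $\ker\Delta_{BC} = \ker\partial\cap\ker\bar\partial\cap\ker(\partial\bar\partial)^*$. This is standard for fourth-order Laplacians of this Kodaira–Spencer type: one writes $\langle\Delta_{BC}\alpha,\alpha\rangle$ as a sum of six squared norms $\|\partial\alpha\|^2 + \|\bar\partial\alpha\|^2 + \|\partial\bar\partial\alpha\|^2 + \|(\partial\bar\partial)^*\alpha\|^2 + \|\partial^*\bar\partial\alpha\|^2 + \|(\partial^*\bar\partial)^*\alpha\|^2$, so $\Delta_{BC}\alpha = 0$ forces all six to vanish; then one observes that $\partial\alpha = 0$ and $\bar\partial\alpha = 0$ already imply $\partial\bar\partial\alpha = 0$ and $\partial^*\bar\partial\alpha = 0$ (wait — $\partial^*\bar\partial\alpha$ need not vanish from those two), so one keeps exactly the three independent conditions $\partial\alpha = \bar\partial\alpha = (\partial\bar\partial)^*\alpha = 0$, the others being consequences. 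Since the excerpt already records (\ref{47}) as a known fact with the reference to \cite{popovicinon}, I would simply cite it rather than reprove it, making the proof of Lemma \ref{34} essentially a two-line consequence: ``$\omega$ Kähler $\iff d\omega = 0 \iff \partial\omega = \bar\partial\omega = 0$ (real form, bidegree) $\iff \omega \in \ker\Delta_{BC}$ by (\ref{47}), noting $(\partial\bar\partial)^*\omega = 0$ follows from $\bar\partial\omega = 0$.'' I expect no real obstacle; the only thing to get right is the direction of the redundancy among the three kernel conditions and the bidegree bookkeeping for $d\omega$.
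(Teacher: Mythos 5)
Your overall architecture is the right one and matches the paper's: use the characterization $\mathrm{Ker}\,\Delta_{BC}=\mathrm{Ker}\,\partial\cap\mathrm{Ker}\,\bar\partial\cap\mathrm{Ker}\,(\partial\bar\partial)^*$ from (\ref{47}), get the implication $\Delta_{BC}\omega=0\Rightarrow d\omega=0$ from bidegree bookkeeping, and reduce the converse to checking $(\partial\bar\partial)^*\omega=0$. But there is a genuine gap exactly at that last step. You assert that $(\partial\bar\partial)^*\omega=0$ is ``automatically implied'' and ``redundant given $\partial\omega=\bar\partial\omega=0$ on a real form.'' That is false as a general statement about real $(1,1)$-forms: a $d$-closed form need not lie in the kernel of any adjoint operator. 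Concretely, on a compact Kähler manifold take $\alpha=\omega+i\partial\bar\partial\varphi$ with $\varphi$ small but not pluriharmonic; then $\alpha$ is a $d$-closed real $(1,1)$-form (even a Kähler form), yet $(\partial\bar\partial)^*\alpha\neq 0$ when the adjoint is computed with respect to $\omega$, since $\langle(\partial\bar\partial)^*(i\partial\bar\partial\varphi),\varphi\rangle=i\lVert\partial\bar\partial\varphi\rVert^2$ up to sign. The condition $(\partial\bar\partial)^*\omega=0$ holds only because $\omega$ is the very metric defining the Hodge star, so that $*\omega=\omega^{n-1}/(n-1)!$. This is the ingredient your argument never invokes, and without it the step does not close.

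The paper's proof supplies precisely this: it writes $(\partial\bar\partial)^*=-*\partial\bar\partial\,*$ on $2$-forms and computes $(\partial\bar\partial)^*\omega=-\tfrac{1}{(n-1)!}*\partial\bar\partial\omega^{n-1}=0$, using $\bar\partial\omega^{n-1}=(n-1)\omega^{n-2}\wedge\bar\partial\omega=0$. Your integration-by-parts sketch could be repaired along the same lines --- $\langle\omega,\partial\bar\partial f\rangle=\int_X\partial\bar\partial f\wedge\tfrac{\omega^{n-1}}{(n-1)!}=\int_X \bar\partial f\wedge\tfrac{d\omega^{n-1}}{(n-1)!}=0$ by Stokes --- but as written it is circular (you pass from $\langle\omega,\partial\bar\partial f\rangle$ back to $\langle\bar\partial^*\partial^*\omega,f\rangle$, which is just the definition of the adjoint) and the clause ``$\bar\partial\omega=0\Rightarrow\bar\partial^*$ applied after conjugation vanishes'' is not a valid inference. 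The fix is one line, but it is the line that carries the content of the forward implication, so you need to state and use $*\omega=\omega^{n-1}/(n-1)!$ explicitly.
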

\begin{proof}
    ($\Longleftarrow$) If $\Delta_{BC}\omega=0$, then (\ref{47}) implies that $\partial\omega=\bar\partial\omega=0$, hence $\omega$ is Kähler.

    ($\Longrightarrow$) Now assume that $\omega$ is Kähler, so we need to show that $\omega\in\text{Ker}(\partial\bar\partial)^*$ again by (\ref{47}). By definition we have $*^2=\text{Id}$ on 2-forms, where $*$ is the Hodge star operator induced by the Kähler metric $\omega$. This yields :
    \begin{equation}
        (\partial\bar\partial)^*=\bar\partial^*\partial^*=-*\partial*(-*\bar\partial*)=-*\partial\bar\partial*
    \end{equation}
    This implies that :
    \begin{equation}
        (\partial\bar\partial)^*\omega=-*\partial\bar\partial*\omega=-\dfrac{1}{(n-1)!}*\partial\bar\partial\omega^{n-1}=0
    \end{equation}
    since $\bar\partial\omega^{n-1}=(n-1)\omega^{n-2}\wedge\bar\partial\omega=0$ by Kählerianity of $\omega$. Hence $\Delta_{BC}\omega=0$.
\end{proof}
We have then the following deformation result :
\begin{thm} Let $\sigma:\mathscr{X}\longrightarrow B$ be a holomorphic family of compact complex manifolds $X_t=\sigma^{-1}(t)$, with $t\in B$. If $X_0$ is $L^p$-Kähler hyperbolic for some $1\leq p\leq+\infty$, then the nearby fibers are $L^p$-SKT hyperbolic. \label{62}
\end{thm}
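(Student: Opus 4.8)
The plan is to transport an $L^p$-Kähler hyperbolic structure on $X_0$ to an $L^p$-SKT hyperbolic structure on the nearby fibers $X_t$, exploiting the fact (Lemma \ref{34}) that a Hermitian metric is Kähler precisely when it lies in the kernel of the Bott--Chern Laplacian. First I would fix an $L^p$-Kähler hyperbolic metric $\omega_0$ on $X_0$, so $\pi_0^*\omega_0 = d\eta_0$ with $\eta_0$ satisfying $(F_p)$ on $\widetilde{X_0}$. By Ehresmann's theorem the family is $\mathscr{C}^\infty$-trivial near $0$, so via the trivialization I may regard $\omega_0$ as a fixed real $(1,1)$-form (for the complex structure $J_0$) that I now want to perturb into something compatible with $J_t$. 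The natural move is to take the $(1,1)_t$-component $\omega_t := (\omega_0)^{(1,1)_t}$ of $\omega_0$ with respect to the complex structure $J_t$; for $t$ small this is still positive definite (positivity is an open condition), hence a genuine Hermitian metric on $X_t$.

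The second step is to show $\omega_t$ is SKT, i.e. $\partial_t\bar\partial_t\omega_t = 0$. Here is where I would lean on the Bott--Chern machinery: $\omega_0$ is closed, so $d\omega_0 = 0$; decomposing $d = \partial_t + \bar\partial_t$ and splitting by $J_t$-bidegree, the $(1,1)_t$-part $\omega_t$ of the closed form $\omega_0$ should satisfy $\partial_t\bar\partial_t\omega_t = 0$ — this is exactly the kind of identity that underlies the ``$\Delta_{BC}=0$'' characterization, and closedness of $\omega_0$ together with type considerations forces the pluriclosedness of its $(1,1)_t$-component. (Concretely, writing $\omega_0 = \omega_t^{2,0} + \omega_t^{1,1} + \omega_t^{0,2}$ and using $d\omega_0=0$ bidegree-by-bidegree yields $\partial_t\bar\partial_t\omega_t^{1,1} = -\partial_t^2\omega_t^{0,2} - \bar\partial_t^2\omega_t^{2,0}$, and since $\partial_t^2 = \bar\partial_t^2 = 0$ the right side vanishes.) So $\omega_t$ is automatically SKT with no extra hypothesis.

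The third step is the $(\widetilde{\partial_t+\bar\partial_t})$-boundedness with the $(F_p)$ weight. Lift everything to universal covers; since the $X_t$ are all diffeomorphic to $X_0$, I identify $\widetilde{X_t}$ with $\widetilde{X_0}$ smoothly and carry $\eta_0$ along, so $\pi_t^*\omega_0 = d\eta_0$ still holds as forms (only the complex structure has moved). Now project $\eta_0 = \eta_0^{1,0_t} + \eta_0^{0,1_t}$ into $J_t$-types and take $\alpha := \eta_0^{0,1_t}$, $\beta := \eta_0^{1,0_t}$. Projecting the identity $\pi_t^*\omega_0 = d\eta_0 = (\partial_t+\bar\partial_t)(\alpha+\beta)$ onto $(1,1)_t$ gives $\pi_t^*\omega_t = \partial_t\alpha + \bar\partial_t\beta$ (the $(1,1)_t$-components of $\partial_t\beta$ and $\bar\partial_t\alpha$ being zero by type). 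Finally, the $(F_p)$ property of $\eta_0$ is preserved under taking components: the pointwise norm of a component of a form is bounded by the norm of the form (up to a constant depending only on $n$, absorbed into $C$), and the comparison of the metrics $\widetilde{\omega_0}$ and $\widetilde{\omega_t}$ on a fundamental domain is uniform by compactness of $X_0$ (Remark \ref{19}.(1)), so $\alpha$ and $\beta$ inherit $(F_p)$ on $\widetilde{X_t}$. Hence $X_t$ is $L^p$-SKT hyperbolic.

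The main obstacle I anticipate is the careful bookkeeping in step three: making precise the identification of the universal covers of nearby fibers with that of $X_0$ in a way compatible with the $\pi_1$-actions (so that $(F_p)$, which is phrased via translates of a fundamental domain, genuinely transfers), and checking that the constants in the metric comparisons on $\widetilde{X_t}$ are uniform in $t$ — this relies on $X_0$ being compact and on the $\mathscr{C}^\infty$ dependence of $\omega_t$ on $t$ from Ehresmann/Kodaira, rather than on anything deep. The algebraic facts (SKT-ness of the $(1,1)_t$-component of a closed form, and the $\partial_t\alpha+\bar\partial_t\beta$ decomposition) are pure type-counting and should be routine.
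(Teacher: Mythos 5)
Your argument is correct and proves the theorem as stated, but by a genuinely lighter route than the paper's. You take the raw $(1,1)_{\mathcal{J}_t}$-component of $\omega_0$, observe it is positive for small $t$ and pluriclosed by pure type-counting from $d\omega_0=0$ (the $(2,1)$-part gives $\partial_t\omega_0^{1,1}=-\bar\partial_t\omega_0^{2,0}$, hence $\partial_t\bar\partial_t\omega_0^{1,1}=0$), and read off $\widetilde{\omega}_t=\partial_t\alpha+\bar\partial_t\beta$ directly from the $(1,1)_t$-part of $d\eta_0$; the $(F_p)$ bound survives because type projections are pointwise norm-nonincreasing for any $\mathcal{J}_t$-Hermitian metric and $\widetilde{\omega}_0,\widetilde{\omega}_t$ are uniformly comparable by compactness of the base fiber. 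The paper instead first replaces each $(\omega_0)^{r,s}_{\mathcal{J}_t}$ by its $L^2_{\gamma_t}$-orthogonal projection onto the Bott--Chern harmonic space $\mathcal{H}^{r,s}_{BC}(X_t)$, so that by Lemma \ref{34} the resulting $(1,1)$-form $\omega_t^{1,1}$ is a genuine \emph{Kähler} metric for $t$ small; the price is the exact correction term $u_t=dv_t$, whose $L^2$-minimal potential $v_t$ (smooth in $t$ by Kodaira--Spencer) must be absorbed into the potentials $\alpha_t,\beta_t$. What that heavier route buys is precisely that the $L^p$-SKT hyperbolic metric on $X_t$ can be taken to be Kähler, which is what feeds Theorem \ref{58} to yield Corollary \ref{9} ($L^2$-Kähler hyperbolicity of the nearby fibers); your metric is in general only pluriclosed (the obstruction $\bar\partial_t\omega_0^{2,0}$ to closedness need not vanish), so it establishes Theorem \ref{62} but would not directly support that corollary. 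A minor slip: your displayed identity $\partial_t\bar\partial_t\omega_t^{1,1}=-\partial_t^2\omega_t^{0,2}-\bar\partial_t^2\omega_t^{2,0}$ is not what the bidegree splitting gives; either the $(2,1)$- or the $(1,2)$-component of $d\omega_0=0$ alone, followed by one application of $\bar\partial_t$ or $\partial_t$, yields the vanishing.
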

\begin{proof} Let $U$ be a neighborhood of $0\in B$ for which $\{X_t\}_{t\in U}$ are all Kähler manifolds, and let $\omega_0$ be an $L^p$-Kähler hyperbolic metric on $X_0$, i.e. $\widetilde{\omega}_0=d\eta$, where $\eta$ is a $1$-form satisfying the property ($F_p$) on some fundamental domain $D\Subset\widetilde{X}_0$. Let $\{\gamma_t\}_{t\in U}$ be a $\mathscr{C}^{\infty}$-family of Kähler metrics on $\{X_t\}_{t\in U}$ given by the Kodaira-Spencer theorem (Theorem \ref{45}) such that $\gamma_0=\omega_0$. Then the induced complex structure $\mathcal{J}_t$ gives the following decomposition for $\omega_0$ (seen as a 2-form) on the fiber $X_t$ :
\begin{equation}
    \omega_0=(\omega_0)^{2,0}_{\mathcal{J}_t}+(\omega_0)^{1,1}_{\mathcal{J}_t}+(\omega_0)^{0,2}_{\mathcal{J}_t}\hspace{3pt},\qquad\quad\forall t\in U
\end{equation}
As a generalization of Proposition 8 in \cite{kodaira1960deformations}, one can prove that the Hodge numbers on every fiber $X_t$ satisfy : $h^{r,s}(X_t)=h^{r,s}(X_0)$, for any $t\in U$ (see Theorem 2.6.4 in \cite{popovicinon} for the proof), then $\omega_t^{r,s}:=F_t^{r,s}\left((\omega_0)^{r,s}_{\mathcal{J}_t}\right)$ form $\mathscr{C}^\infty$-families of forms of bidegree ($r,s$) on $X_t$ (with $r+s=2$), where $F_t^{r,s}:\mathscr{C}^\infty_{r,s}(X)\longrightarrow\mathcal{H}^{r,s}_{BC}(X_t,\C)$ are the $L^2_{\gamma_t}$-orthogonal projections (that vary smoothly with respect to $t\in U$ by Theorem 5 in \cite{kodaira1960deformations}) onto the Bott-Chern harmonic spaces $\mathcal{H}^{r,s}_{BC}(X_t,\C)$. Then $\{\omega^{1,1}_t\}_{t\in U}$ is a new $\mathscr{C}^\infty$-family of Kähler metrics on $X_t$ (up to shrinking $U$ about 0 thanks to Lemma \ref{34}) such that $\omega_0^{1,1}=\omega_0$. On the other hand, the Hodge decomposition with respect to any fiber $X_t$ (with $t\in U$) holds :
\begin{equation}
    \{\omega_0\}_{DR}\in H^2(X_0,\C)\simeq H^2(X_t,\C)\simeq H^{2,0}_{BC}(X_t,\C)\oplus H^{1,1}_{BC}(X_t,\C)\oplus H^{0,2}_{BC}(X_t,\C) 
\end{equation}
Thus, for any $t\in U$ we have :
\begin{equation}
    \omega_0=\omega^{2,0}_t+\omega^{1,1}_t+\omega^{0,2}_t+u_t\hspace{2pt},\qquad u_t\in\mathscr{C}^\infty_2(X_t)\cap\text{Im\hspace{1pt}}d \label{23}
\end{equation}
Clearly, the family $\{u_t\}_{t\in U}$ is smooth, due to the smoothness of the other families of forms involved in this decomposition. So one can consider, for each $t\in U$, the $L^2$-minimal solution of the equation :
\begin{equation}
    dv_t=u_t\hspace{4pt},\qquad\quad t\in U
\end{equation}
This is given by the formula : $v_t=d_t^*\Delta_t^{-1}u_t$, for all $t\in U$. Then, this family varies smoothly with respect to $t$ again thanks to Theorem 5 in \cite{kodaira1960deformations}. Hence, the decomposition (\ref{23}) becomes : 
\begin{equation}
    \omega_0=\omega^{2,0}_t+\omega^{1,1}_t+\omega^{0,2}_t+dv_t\hspace{3pt},\qquad\quad v_t\in\mathscr{C}_1^\infty(X_t)
\end{equation}
Pulling back to the universal cover, we get :
\begin{equation}
    \widetilde{\omega}_0:=\pi^*\omega_0=\pi^*\omega^{2,0}_t+\pi^*\omega^{1,1}_t+\pi^*\omega^{0,2}_t+d\pi^*v_t=d\eta
    \label{36}
\end{equation}
where $\pi$ denotes the universal covering map $\widetilde{X_t}\longrightarrow X_t$ (by abuse of notation, since these fibers $X_t$ are diffeomorphic, hence $\widetilde{X}_t\underset{\mathscr{C}^\infty}{\simeq}\widetilde{X}_0$). By taking the $(1,1)$-part in (\ref{36}) we get :
\begin{equation}
    \widetilde{\omega}^{1,1}_t:=\pi^*_t\omega_t^{1,1}=\partial_t(\eta^{0,1}_t-\pi^*v^{0,1}_t)+\bar\partial_t(\eta^{1,0}_t-\pi^*v^{1,0}_t)\hspace{2pt},\qquad\forall t\in U
\end{equation}
Put $\alpha_t:=\eta^{0,1}_t-\pi^*v^{0,1}_t$ and $\beta_t:=\eta^{1,0}_t-\pi^*v^{1,0}_t$, then 
\begin{equation}
    \widetilde{\omega}^{1,1}_t=\partial_t\alpha_t+\bar\partial_t\beta_t \label{25}
\end{equation}
If $p=+\infty$, then $\alpha_t$ and $\beta_t$ satisfy the following inequalities for any $\gamma\in\pi_1(X_0)$ :
\begin{equation*}
\begin{cases}
    \lVert\alpha_t\rVert_{L^\infty_{\widetilde{\omega}^{1,1}_t}(\widetilde{X}_t)}\leq \lVert\eta_t^{0,1}\rVert_{L^\infty_{\widetilde{\omega}^{1,1}_t}(\widetilde{X}_t)}+\lVert\pi^*v_t^{0,1}\rVert_{L^\infty_{\widetilde{\omega}^{1,1}_t}(\widetilde{X}_t)}\leq C_t\lVert\eta\rVert_{L^\infty_{\widetilde{\omega}_0}(\widetilde{X}_0)}+\lVert v_t\rVert_{L^\infty_{\omega^{1,1}_t}(X)}<\infty \\
    \lVert\beta_t\rVert_{L^\infty_{\widetilde{\omega}^{1,1}_t}(\widetilde{X}_t)}\leq \lVert\eta_t^{1,0}\rVert_{L^\infty_{\widetilde{\omega}^{1,1}_t}(\widetilde{X}_t)}+\lVert\pi^*v_t^{1,0}\rVert_{L^\infty_{\widetilde{\omega}^{1,1}_t}(\widetilde{X}_t)}\leq C_t\lVert\eta\rVert_{L^\infty_{\widetilde{\omega}_0}(\widetilde{X}_0)}+\lVert v_t\rVert_{L^\infty_{\omega^{1,1}_t}(X)}<\infty
\end{cases}
\end{equation*}
where $X$ is the differentiable manifold that underlies all the Kähler fibers $X_t$, and the constant $C_t>0$ depends on the Kähler metric $\omega^{1,1}_t$ on the compact Kähler fiber $X_t$. Hence, the fibers $X_t$ are SKT hyperbolic.

If $1\leq p<+\infty$, then the following inequalities hold :
\begin{align*}
\begin{cases}
    \lVert\alpha_t\rVert_{L^p_{\widetilde{\omega}^{1,1}_t}(\gamma D)}&\leq \lVert\eta_t^{0,1}\rVert_{L^p_{\widetilde{\omega}^{1,1}_t}(\gamma D)}+\lVert\pi^*v_t^{0,1}\rVert_{L^p_{\widetilde{\omega}^{1,1}_t}(\gamma D)}\\ &\leq C'_t\lVert\eta\rVert_{L^p_{\widetilde{\omega}_0}(\gamma D)}+\lVert v_t\rVert_{L^\infty_{\omega^{1,1}_t}(X)}\text{Vol}^{\frac{1}{p}}_{\widetilde{\omega}^{1,1}_t}(X)\leq C''_t \\
    \lVert\beta_t\rVert_{L^p_{\widetilde{\omega}^{1,1}_t}(\gamma D)}&\leq \lVert\eta_t^{1,0}\rVert_{L^p_{\widetilde{\omega}^{1,1}_t}(\gamma D)}+\lVert\pi^*v_t^{1,0}\rVert_{L^p_{\widetilde{\omega}^{1,1}_t}(\gamma D)}\\ &\leq C'_t\lVert\eta\rVert_{L^p_{\widetilde{\omega}_0}(\gamma D)}+\lVert v_t\rVert_{L^\infty_{\omega^{1,1}_t}(X)}\text{Vol}^{\frac{1}{p}}_{\widetilde{\omega}^{1,1}_t}(X)\leq C''_t
\end{cases}
\end{align*}
where $C'_t>0$ depends on the Kähler metric $\omega^{1,1}_t$ on $X_t$, and hence $C''_t>0$ does not depend on $\gamma$. Hence, the fibers $X_t$ are $L^p$-SKT hyperbolic.
\end{proof}
\subsection{Deformation of Kähler hyperbolic manifolds}
In this section, we will restrict ourselves to the study of deformations of Kähler hyperbolic manifolds. Then, we have the following result :
\begin{thm} Let $X$ be a compact Kähler manifold. If $X$ admits a Kähler metric $\omega$ which is SKT hyperbolic, then $\omega$ is $L^2$-Kähler hyperbolic. \label{58}
\end{thm}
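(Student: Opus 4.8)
The plan is to realize $\widetilde\omega=d\eta$ for a $1$-form $\eta$ having property $(F_2)$, by running $L^2$-Hodge theory on the complete noncompact Kähler manifold $(\widetilde X,\widetilde\omega)$, the engine being the spectral gap that $(\partial+\bar\partial)$-boundedness provides.

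\textbf{Reductions.} First I would record that $\widetilde\omega=\pi^*\omega$ is bounded, hence has $(F_p)$ for all $p$ by Lemma \ref{4}, is $d$-closed ($\omega$ Kähler), and is coclosed, since $d^*\widetilde\omega=-\tfrac{1}{(n-1)!}*d\widetilde\omega^{\,n-1}=0$; so it is a (non-$L^2$) $\Delta$-harmonic $2$-form. Writing $\widetilde\omega=\partial\alpha+\bar\partial\beta$ with $\alpha,\beta$ bounded and averaging with the conjugate, I may assume $\widetilde\omega=\partial\gamma+\bar\partial\bar\gamma$ for a bounded $(0,1)$-form $\gamma$. From $\bar\partial\widetilde\omega=0$ one gets $\partial\bar\partial\gamma=0$, so $\Phi:=\partial\bar\gamma$ is a $d$-closed $(2,0)$-form, i.e.\ a holomorphic $2$-form on $\widetilde X$, and a bidegree count gives $d(\gamma+\bar\gamma)=\widetilde\omega+\Phi+\bar\Phi$. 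As $\gamma+\bar\gamma$ is bounded, hence $(F_2)$, it suffices to find a $1$-form $\zeta$ with $(F_2)$ solving $d\zeta=\Phi+\bar\Phi$ and then set $\eta:=\gamma+\bar\gamma-\zeta$. Note $\Phi$ is smooth (being holomorphic) but need not be bounded on the noncompact $\widetilde X$: that is exactly the room between the $L^\infty$- and the $L^2$-conclusion.

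\textbf{The spectral gap.} Since $\omega$ is at once Kähler and SKT hyperbolic, $\widetilde\omega$ is a $(\partial+\bar\partial)$-bounded complete Kähler metric, so Marouani's analogue of Theorem \ref{33} (\cite{marouani2023skt}, Thm.\ 3.19) yields $\lambda_0>0$ with $\langle\Delta\psi,\psi\rangle\ge\lambda_0^2\lVert\psi\rVert^2_{L^2_{\widetilde\omega}(\widetilde X)}$ for all $L^2$-forms of degree $\ne n:=\dim_\C X$, and for $L^2$ $n$-forms orthogonal to the $L^2$-harmonic $n$-forms. If $n=1$ the $(2,0)$- and $(0,2)$-parts of $\partial\alpha+\bar\partial\beta$ vanish for degree reasons, so $\widetilde\omega=d(\alpha+\beta)$ with $\alpha+\beta$ bounded and $\omega$ is even Kähler hyperbolic; so I assume $n\ge2$, where the degree at hand is $2$ — non-exceptional if $n\ge3$, the middle degree if $n=2$ (in which case one also uses that $\Phi,\bar\Phi$ are themselves $\Delta$-harmonic, to remove the finite-dimensional harmonic obstruction by hand).

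\textbf{Solving for $\zeta$, and the main obstacle.} The difficulty is that $\Phi+\bar\Phi$, like $\widetilde\omega$, is not globally $L^2$ on $\widetilde X$, so one cannot simply hit it with the global Green operator $G=\Delta^{-1}$. Instead I would exhaust $\widetilde X$ by relatively compact domains $\Omega_R\Subset\widetilde X$ and solve $d\zeta_R=(\Phi+\bar\Phi)|_{\Omega_R}$ on each $\Omega_R$ via the $L^2_{\widetilde\omega}$-minimal primitive (equivalently $d^*$ of the Green potential of $\Delta$ on $\Omega_R$). Using that $\widetilde X$ has bounded geometry (it covers the compact $X$), that $\lVert\widetilde\omega\rVert_{L^\infty_{\widetilde\omega}(\widetilde X)}=\lVert\omega\rVert_{L^\infty_\omega(X)}<\infty$, and that the spectral gap is a uniform quantitative input, I expect an interior bound $\lVert\zeta_R\rVert_{L^2_{\widetilde\omega}(\Omega_{R/2})}\le C$ with $C$ independent of $R$ and, after translation by the isometric $\pi_1(X)$-action (Remark \ref{1}.(3)), of the center; then $R\to\infty$ and a weak $L^2_{loc}$-limit give $\zeta$ with $d\zeta=\Phi+\bar\Phi$ and $\sup_{\gamma\in\pi_1(X)}\lVert\zeta\rVert_{L^2_{\widetilde\omega}(\gamma D)}<\infty$, i.e.\ property $(F_2)$ (independent of the fundamental domain by Remark \ref{19}). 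Hence $\eta:=\gamma+\bar\gamma-\zeta$ has $(F_2)$, as a sum of an $(F_\infty)\subset(F_2)$ form and an $(F_2)$ form, and $d\eta=\widetilde\omega$. The hard part is this uniform interior $L^2$-estimate and keeping it uniform over the whole orbit $\pi_1(X)\cdot D$ in the limit (equivalently: the vanishing of the reduced $L^2$-cohomology $\overline H^2_{(2)}(\widetilde X)$ as a Hilbert $\pi_1(X)$-module for $n\ge3$, in Atiyah's sense). That only $(F_2)$ and not boundedness survives — which would upgrade the conclusion to Kähler hyperbolicity of $\omega$ — reflects that $\eta$ sits one integration above the bounded datum $\widetilde\omega$ while the exhaustion provides only $L^2$-type control; this is consistent with Kähler hyperbolicity being shown open (in the Main Theorem) only under the extra hypothesis $h^{2,0}(X_0)=0$.
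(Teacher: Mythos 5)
Your reductions are correct: averaging with the conjugate to write $\widetilde\omega=\partial\gamma+\bar\partial\bar\gamma$, observing that $\Phi=\partial\bar\gamma$ is a holomorphic $2$-form with $d(\gamma+\bar\gamma)=\widetilde\omega+\Phi+\bar\Phi$, and isolating the dimension-$1$ case are all fine. But the proof has a genuine gap exactly where you flag it. The step ``solve $d\zeta=\Phi+\bar\Phi$ with property $(F_2)$'' is not a simplification of the original problem: since $\Phi+\bar\Phi=d(\gamma+\bar\gamma)-\widetilde\omega$ with $\gamma+\bar\gamma$ bounded, producing an $(F_2)$-primitive of $\Phi+\bar\Phi$ is \emph{equivalent} to producing an $(F_2)$-primitive of $\widetilde\omega$, which is the theorem itself. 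The form $\Phi$ carries no quantitative control ($\partial$ of a bounded form need not be bounded, $L^2$, or $(F_p)$), so the spectral gap cannot be applied to it directly, and your exhaustion scheme is only announced, not executed: the uniform interior bound $\lVert\zeta_R\rVert_{L^2_{\widetilde\omega}(\Omega_{R/2})}\leq C$ independent of $R$ and of the center is asserted as an expectation, with no mechanism given for why the $L^2$-minimal primitive on $\Omega_R$ of a datum whose $L^2(\Omega_R)$-norm may grow with $R$ should localize. Nothing in bounded geometry or in the spectral gap alone yields such an estimate; this is precisely the hard analytic content, and it is absent.

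For comparison, the paper's proof supplies the missing mechanism by working with the Aeppli potentials themselves rather than with $\Phi$. One cuts $\alpha=\sum_{\gamma\in\pi_1(X)}\alpha_\gamma$ and $\beta=\sum_{\gamma}\beta_\gamma$ into pieces supported on the translates $\gamma\overline{D}$ of a fundamental domain; each piece \emph{is} $L^2$ with norm $\leq C_\alpha$ (resp.\ $C_\beta$) uniformly in $\gamma$. For each piece one solves globally on $\widetilde X$ the problem $\partial u=\partial\alpha_\gamma$, $\bar\partial u=0$ (and its conjugate for $\beta_\gamma$) via the Green operator of the Bott--Chern Laplacian, whose spectral gap $\mathrm{Spec}\,\Delta_{BC}\subset[\lambda^4/4,+\infty[$ away from the middle degree gives the uniform estimate $\lVert u\rVert^2\leq 2(1+4/\lambda^4)\lVert v\rVert^2$ of (\ref{57}); crucially, the $L^2$-minimal solution inherits the support of the datum (Remark \ref{59}), so the solutions $\alpha'_\gamma,\beta'_\gamma$ remain supported in $\gamma\overline D$ and the sums $\alpha'=\sum\alpha'_\gamma$, $\beta'=\sum\beta'_\gamma$ converge locally and satisfy $(F_2)$ by construction. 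Then $\theta=\alpha'+\beta'$ satisfies $d\theta=\partial\alpha+\bar\partial\beta=\widetilde\omega$. If you want to salvage your route, you would need to import exactly this cut-and-solve-with-support-control device (or prove the uniform interior estimate you postulate, which amounts to the same thing); as written, the proposal does not prove the theorem.
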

One key ingredient of the proof, is the \textbf{spectral gap} for Hodge-de Rham Laplace operator $\Delta$ in Theorem \ref{33} holds for the universal cover of Kähler manifolds carying Kähler SKT hyperbolic metrics. More precisely, let $X$ be a complete Kähler manifold with, and let $\omega$ be a Kähler metric on $X$ such that $\omega=\partial\alpha+\bar\partial\beta$, where $\alpha$ and $\beta$ are bounded $(0,1)$ and respectively $(1,0)$-forms on $X$. Then there exists a constant $\lambda=\lambda\left(\lVert\alpha\rVert_{L^\infty_{\omega}(X)},\lVert\beta\rVert_{L^\infty_{\omega}(X)}\right)>0$ such that for any $k\neq\dim_\C X=n$ (\cite{marouani2023skt}, Theorem 3.19) :
\begin{equation}
    \langle\Delta\psi,\psi\rangle\geq\lambda^2\lVert\psi\rVert^2_{L^2_\omega(X)}\hspace{4pt},\qquad\quad\forall\psi\in L^2_k(X) \label{66}
\end{equation}
This in particular implies that the spectrum of $\Delta$ is included in $[\lambda^2,+\infty[$, when acting on $L^2$-forms of degree $k\neq n$. As a consequence, $X$ has vanishing $L^2$-cohomology away from the middle degree, i.e. $\mathcal{H}^k_{\Delta}(X)=0$, for any $k\neq n$ (\cite{marouani2023skt}, Corollary 3.20). Moreover, the inequality (\ref{66}) holds also true for $L^2$-forms of degree $n$ which are orthogonal to the $\Delta$-harmonic $n$-forms. The proof goes the same as for Theorem 1.4.A in \cite{gromov1991kahler}.
\begin{rmk}
  A spectral gap holds also for the $\bar\partial$-Laplacian $\Delta'':=\bar\partial\bar\partial^*+\bar\partial^*\bar\partial$ and the $\partial$-Laplacian $\Delta':=\partial\partial^*+\partial^*\partial$ thanks to the classical \textbf{Bochner-Kodaira-Nakano} identity, or \textbf{BKN} for short (see \cite{demailly1997complex}, Corollary 6.5 for instance) : 
\begin{equation}
    \Delta_{\widetilde{\omega}}=2\Delta_{\widetilde{\omega}}'=2\Delta_{\widetilde{\omega}}'' 
\end{equation}  
\label{56}
with : $\text{Spec}\Delta'=\text{Spec}\Delta''\subset\left[\dfrac{\lambda^2}{2},+\infty\right[$ when acting on ($p,q$)-forms with $p+q\neq n$. \label{40}
\end{rmk} 
On a complete (Riemannian) manifold $X$, we have the following formula for the Laplacian acting on $L^2$-forms of degree $k$ (Lemma p.7 in \cite{gromov1991kahler}) :
\begin{equation}
    \langle\Delta\psi,\psi\rangle=\lVert d\psi\rVert^2_{L^2_k(X)}+\lVert d^*\psi\rVert^2_{L^2_k(X)}\hspace{3pt},\qquad\forall\psi\in L^2_k(X) \label{39}
\end{equation} 
This can be obtained by showing that $d^*$ is indeed the adjoint of $d$ when acting on the space of $L^2$-forms using \textbf{Gaffney's cutoff function} technique (Corollary p.6 in \cite{gaffney1954special}) or through the use of \textit{exhaustion functions} (Theorem 3.2, Chapter VII in \cite{demailly1997complex}). In particular, we have the following \textbf{weak Kodaira decompositon} (due to K. Kodaira, \cite{kodaira1949harmonic}) for any $k\in\{0,\dots,2n\}$ :
\begin{equation}
    L^2_k(X)=\mathcal{H}_\Delta^k(X)\oplus\overline{d(L^2_{k-1}(X))}\oplus\overline{d^*(L^2_{k+1}(X))} \label{38}
\end{equation}
\begin{rmk}
One can prove that a similar formula to (\ref{39}) holds for the $\partial$-Laplacian and the $\bar\partial$-Laplacian, as well as the decomposition (\ref{38}). \label{68}
\end{rmk}
Now, we will prove that the positive lower bound on the spectrum of the Laplacian in (\ref{66}), that we have on the universal cover of a compact Kähler manifold carrying a Kähler metric which is ($\widetilde{\partial+\bar\partial}$)-bounded, implies that $d$ and $d^*$ have closed images. To show this, we need the following ingredient from functional analysis :
\begin{lem} Let $\mathcal{H}_1$ and $\mathcal{H}_2$ be two Hilbert spaces, and let $T:\mathcal{H}_1\longrightarrow\mathcal{H}_2$ be a closed operator. Then the following are equivalent :
\begin{enumerate}
    \item $\text{Im}T$ is closed, i.e. $\text{Im}T=\overline{\text{Im}T}$.
    \item $\text{Im}T^*$ is closed.
    \item There exists a constant $C>0$ such that : $\lVert Tu\rVert_{\mathcal{H}_2}\geq C\lVert u\rVert_{\mathcal{H}_1}$, $\forall u\in\text{Dom}T\cap(\text{Ker}T)^\perp$. 
\end{enumerate} \label{67}
\end{lem}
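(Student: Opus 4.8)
The plan is to recognize this as the classical closed range theorem of Banach, adapted to closed operators on Hilbert spaces, and to organize the argument around the two cycles $(3)\Rightarrow(1)\Rightarrow(3)$ and $(1)\Rightarrow(2)\Rightarrow(1)$. Throughout I will assume, as is implicit in the statement (and as holds in the intended application, where $T$ is the maximal $L^2$-extension of $d$ on the complete manifold $\widetilde X$), that $T$ is densely defined, so that $T^*$ exists, is itself closed and densely defined, and $T^{\ast\ast}=T$. I will use repeatedly the standing orthogonality relations $\mathrm{Ker}\,T^*=(\mathrm{Im}\,T)^\perp$ and $\mathrm{Ker}\,T=(\mathrm{Im}\,T^*)^\perp$, which hold unconditionally for densely defined closed $T$ and yield $\overline{\mathrm{Im}\,T^*}=(\mathrm{Ker}\,T)^\perp$ and $\overline{\mathrm{Im}\,T}=(\mathrm{Ker}\,T^*)^\perp$.

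First I would prove $(3)\Rightarrow(1)$. Given $y_n=Tu_n\to y$ in $\mathcal{H}_2$, I replace each $u_n$ by its orthogonal component in $(\mathrm{Ker}\,T)^\perp$; since $\mathrm{Ker}\,T\subset\mathrm{Dom}\,T$, this component still lies in $\mathrm{Dom}\,T\cap(\mathrm{Ker}\,T)^\perp$ and has the same image under $T$. The inequality in $(3)$ then gives $\lVert u_n-u_m\rVert\le C^{-1}\lVert y_n-y_m\rVert$, so $(u_n)$ is Cauchy and converges to some $u$; closedness of $T$ forces $u\in\mathrm{Dom}\,T$ and $Tu=y$, so $\mathrm{Im}\,T$ is closed. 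For the converse $(1)\Rightarrow(3)$, I restrict $T$ to $T_0:\mathrm{Dom}\,T\cap(\mathrm{Ker}\,T)^\perp\to\mathrm{Im}\,T$. This $T_0$ is closed, injective, and onto the now-closed Hilbert space $\mathrm{Im}\,T$, so its everywhere-defined inverse $T_0^{-1}$ is closed and hence bounded by the closed graph theorem; the bound $\lVert T_0^{-1}\rVert\le M$ is precisely $(3)$ with $C=1/M$.

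For $(1)\Rightarrow(2)$ I argue by duality. Assuming $\mathrm{Im}\,T$ closed and fixing $w\in(\mathrm{Ker}\,T)^\perp$, I define on $\mathrm{Im}\,T$ the functional $\phi(Tu)=\langle u,w\rangle$, where $u$ is chosen in $(\mathrm{Ker}\,T)^\perp\cap\mathrm{Dom}\,T$ (the unique such representative). By the already established $(3)$, $\lvert\phi(Tu)\rvert\le\lVert u\rVert\,\lVert w\rVert\le C^{-1}\lVert w\rVert\,\lVert Tu\rVert$, so $\phi$ is bounded; extending it by zero on $(\mathrm{Im}\,T)^\perp$ and applying the Riesz representation theorem produces $v\in\mathcal{H}_2$ with $\langle Tu,v\rangle=\langle u,w\rangle$, first for $u\in(\mathrm{Ker}\,T)^\perp\cap\mathrm{Dom}\,T$ and then, after splitting off the kernel part (on which both sides vanish, using $w\perp\mathrm{Ker}\,T$), for all $u\in\mathrm{Dom}\,T$. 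By the definition of the adjoint this means $v\in\mathrm{Dom}\,T^*$ and $T^*v=w$, so $(\mathrm{Ker}\,T)^\perp\subset\mathrm{Im}\,T^*$; since the reverse inclusion is automatic, $\mathrm{Im}\,T^*=(\mathrm{Ker}\,T)^\perp$ is closed. Finally $(2)\Rightarrow(1)$ follows by applying $(1)\Rightarrow(2)$ to the closed densely defined operator $T^*$ and invoking $T^{\ast\ast}=T$.

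I do not anticipate a genuine obstacle, as this is a standard functional-analytic fact; the only points demanding care are the domain bookkeeping—ensuring that projecting onto $(\mathrm{Ker}\,T)^\perp$ preserves membership in $\mathrm{Dom}\,T$ without altering the image, which rests on $\mathrm{Ker}\,T\subset\mathrm{Dom}\,T$—and the density hypothesis on $T$ needed to make $T^*$ and $T^{\ast\ast}=T$ available. Both are harmless in our setting, where $T$ is the $L^2$-closure of $d$ (or $d^*$) on $\widetilde X$.
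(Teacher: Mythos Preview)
Your argument is correct and is one of the standard proofs of the closed range theorem for closed densely defined operators between Hilbert spaces. Note, however, that the paper does not actually prove this lemma: it simply states that ``the proof of this Lemma can be found in almost any functional analysis textbook'' and refers to Proposition~2.3 and Theorem~2.4 of Ohsawa's book. So there is nothing to compare your approach against; you have supplied a complete proof where the paper only gives a citation. Your handling of the two delicate points---that projecting onto $(\mathrm{Ker}\,T)^\perp$ stays inside $\mathrm{Dom}\,T$ because $\mathrm{Ker}\,T\subset\mathrm{Dom}\,T$, and that dense definition is needed for $T^{**}=T$---is accurate and matches what the cited references do.
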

The proof of this Lemma can be found in almost any functional analysis textbook, or see for instance Proposition 2.3 and Theorem 2.4 in \cite{ohsawa2015l2}.
\begin{prop} Let $X$ be a complete Kähler manifold equipped with a Kähler metric $\omega$ such that : $\omega=\partial\alpha+\bar\partial\beta$, for some bounded forms $\alpha$ and $\beta$ on $X$. Then : 
\begin{align}
    d:L^2_k(X)&\longrightarrow L^2_{k+1}(X) \\
    d^*:L^2_k(X)&\longrightarrow L^2_{k-1}(X)
\end{align}
have closed images in every degree $k$. \label{21}
\end{prop}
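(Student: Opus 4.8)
The plan is to deduce the closed-range statement directly from the spectral gap in \eqref{66} together with the functional-analytic criterion in Lemma \ref{67}. The operators $d$ and $d^*$, regarded as densely defined operators between the relevant $L^2$-spaces of forms on the complete manifold $X$, are closed; this is standard on complete manifolds and is exactly the content of the Gaffney cutoff argument already invoked in the discussion around \eqref{39}. So the hypotheses of Lemma \ref{67} are met and it suffices to verify condition (3) for $T=d$ (the statement for $d^*$ then follows from the equivalence (1)$\Leftrightarrow$(2) in the same lemma, or symmetrically).

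First I would treat the degrees $k\neq n$. Fix $\psi\in\mathrm{Dom}\,d$ with $\psi\perp\ker d$. Using the weak Kodaira decomposition \eqref{38} one sees that $\psi\in\overline{d^*(L^2_{k+1}(X))}$, hence $d^*\psi$ determines $\psi$ and in particular $\psi$ lies in $\mathrm{Dom}\,\Delta$ after the usual approximation; applying \eqref{39} gives
\begin{equation*}
\langle\Delta\psi,\psi\rangle=\lVert d\psi\rVert^2_{L^2(X)}+\lVert d^*\psi\rVert^2_{L^2(X)}.
\end{equation*}
Since $\psi\perp\ker d$ and $\ker d^*$ is orthogonal to $\overline{\mathrm{Im}\,d}$, one checks that $\psi$ also has no harmonic component, so the spectral gap \eqref{66} applies and yields $\langle\Delta\psi,\psi\rangle\geq\lambda^2\lVert\psi\rVert^2$. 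Feeding this back into the identity above and discarding the nonnegative term $\lVert d^*\psi\rVert^2$ gives $\lVert d\psi\rVert^2\geq\lambda^2\lVert\psi\rVert^2-\lVert d^*\psi\rVert^2$; a cleaner route is to note that for $\psi\perp\ker d$ one has $d^*\psi=0$ is false in general, so instead I would argue on $\overline{d^*(L^2_{k+1})}$ directly: for $\varphi=d^*\xi$ the bound $\lVert d\varphi\rVert^2=\langle\Delta\varphi,\varphi\rangle\geq\lambda^2\lVert\varphi\rVert^2$ holds because such $\varphi$ is orthogonal to both $\ker d$... — more precisely, apply \eqref{66} to elements of $(\ker d\cap\ker d^*)^\perp$ and combine with \eqref{39} to get $\lVert d\psi\rVert\geq C\lVert\psi\rVert$ on $\mathrm{Dom}\,d\cap(\ker d)^\perp$, which is condition (3).

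The remaining point is the middle degree $k=n$, where \eqref{66} only holds on the orthogonal complement of the $\Delta$-harmonic $n$-forms. Here I would split $L^2_{n-1}(X)$ and $L^2_{n+1}(X)$ appropriately: the image $d(L^2_{n-1}(X))$ lands in the exact part of $L^2_n(X)$, which is automatically orthogonal to $\mathcal H^n_\Delta(X)$, so the spectral estimate \eqref{66} is available precisely where it is needed and the same computation as above goes through to give the coercivity bound; similarly $d^*(L^2_{n+1}(X))$ is coexact, hence orthogonal to the harmonic space. Thus condition (3) of Lemma \ref{67} holds in every degree, and so $\mathrm{Im}\,d$ and $\mathrm{Im}\,d^*$ are closed in every degree. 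The main obstacle I anticipate is the bookkeeping at $k=n$—making sure that the subspace on which one needs the estimate really is contained in the orthogonal complement of the harmonic forms, and that the closedness of $d$ (needed to invoke Lemma \ref{67}) is properly justified on the complete, noncompact manifold $X$ rather than taken for granted.
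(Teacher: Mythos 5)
Your overall strategy is the paper's: verify condition (3) of Lemma \ref{67} by combining the spectral gap (\ref{66}) with the identity (\ref{39}), then treat the middle degree separately. The one place you go astray is the assertion that ``for $\psi\perp\ker d$ one has $d^*\psi=0$'' is false in general. It is in fact true, and it is precisely the observation that makes the argument close: since $d^2=0$ we have $\mathrm{Im}\,d\subset\ker d$, hence
\begin{equation*}
(\ker d)^\perp\subset(\overline{\mathrm{Im}\,d})^\perp=\ker d^*,
\end{equation*}
so any $\psi\in\mathrm{Dom}\,d\cap(\ker d)^\perp$ of degree $k\neq n$ satisfies $d^*\psi=0$, whence (\ref{39}) and (\ref{66}) give $\lVert d\psi\rVert^2=\langle\Delta\psi,\psi\rangle\geq\lambda^2\lVert\psi\rVert^2$, with no unwanted $\lVert d^*\psi\rVert^2$ term to discard. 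This is exactly the computation in the paper (phrased there for a sequence $\beta_\nu\in(\ker d)^\perp$, for which $d^*\beta_\nu=0$). Your hedged fallback (``apply (\ref{66}) to elements of $(\ker d\cap\ker d^*)^\perp$'') is therefore unnecessary and, as stated, is not quite the hypothesis of condition (3); delete it and keep the direct estimate.

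For the middle degree your route differs mildly from the paper's. You argue that $(\ker d)^\perp\subset L^2_n(X)$ is automatically orthogonal to $\mathcal{H}^n_\Delta(X)$ (harmonic forms being closed), so the extended version of (\ref{66}) applies and the same coercivity bound goes through; the paper instead invokes the equivalence (1)$\Leftrightarrow$(2) of Lemma \ref{67}, noting that $d:L^2_n(X)\to L^2_{n+1}(X)$ is the adjoint of $d^*:L^2_{n+1}(X)\to L^2_n(X)$, whose range is already known to be closed because its source degree is $n+1\neq n$ (and symmetrically for $d^*:L^2_n(X)\to L^2_{n-1}(X)$). Both work: the duality argument avoids any discussion of harmonic forms, while yours is more self-contained. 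Just note that the subspace on which the coercivity estimate is needed lives in the \emph{source} $L^2_n(X)$, so your remarks about $d(L^2_{n-1}(X))$ being exact inside $L^2_n(X)$ concern the wrong operator; the operators requiring care are the two whose source is the middle degree.
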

\begin{proof}
First, we are going to prove the closedness of the images of $d$ away from degree $n$. So let us take a sequence $\{\psi_\nu\}_{\nu\geq1}\subset d(L^2_{k}(X))$ that converges to some $\psi\in L^2_{k+1}(X)$, so for every $\nu\geq1$ we have $\psi_\nu=d\beta_\nu$ for some $\beta_\nu$ in $L^2_{k}(X)$. The $\beta_\nu$'s cannot be all closed unless $\psi=0$, so we can choose the $\beta_\nu$'s such that : $\beta_\nu\in(\text{Ker}d)^\perp\subset(\text{Im}d)^\perp=\text{Ker}d^*$. Hence $d^*\beta_\nu=0$, so (\ref{66}) together with (\ref{39}) imply :
\begin{equation*}
\lVert \psi_\nu\rVert_{L^2_{k+1}(X)}=\lVert d\beta_\nu\rVert_{L^2_{k+1}(X)}\geq \lambda\lVert \beta_\nu\rVert_{L^2_k(X)}
\end{equation*}
and this is equivalent to the closedness of $d(L^2_k(X))$ for every degree $k\neq n$ by Lemma \ref{67}. In a similar fashion, we can show that $d^*(L^2_k(X))$ is closed whenever $k\neq n$. When $k=n$, we can see that $d^*:L^2_n(X)\longrightarrow L^2_{n-1}(X)$ has a closed image due to Lemma \ref{67}, since it is the dual of $d:L^2_{n-1}(X)\longrightarrow L^2_n(X)$, which has a closed image as we proved previously. Similarly, $d:L^2_n(X)\longrightarrow L^2_{n+1}(X)$ has a closed image since it is the adjoint of the operator $d^*:L^2_{n+1}(X)\longrightarrow L^2_n(X)$, which has a closed image.
\end{proof} 
In particular, we have the following othogonal decompositions in every degree :
\begin{equation}
    \begin{cases}
    L^2_k(X)&=d(L^2_{k-1}(X))\oplus d^*(L^2_{k-1}(X))\hspace{4pt},\qquad \forall k\neq n \\
    L^2_n(X)&=\mathcal{H}^n_{\Delta}(X)\oplus d(L^2_{n-1}(X))\oplus d^*(L^2_{n-1}(X))  \label{20}
\end{cases}
\end{equation}
\begin{rmk}
Similarly, the images of $\partial$, $\partial^*$, $\bar\partial$ and $\bar\partial^*$ are also closed when acting on $L^2$-forms of any bidegree ($p,q$), due to Remark \ref{40}.  \label{50}
\end{rmk}
Now, we consider the following $\bar\partial$-type problem on a complete Kähler manifold $X$ endowed with a kähler metric $\omega=\partial\alpha+\bar\partial\beta$, for some bounded forms $\alpha$ and $\beta$ on $X$ :
\begin{equation}
\begin{cases}
\bar\partial u=\bar\partial v\hspace{2pt},\qquad v\in L^2_{k,0}(X)\hspace{2pt} \\
\partial u=0
\end{cases} \label{41}
\end{equation}
This kind of $\bar\partial$-problems has been treated in the compact Hermitian case by D. Popovici and S. Dinew (\cite{dinew2021generalised}, Lemma 2.4) by the use of the Bott-Chern Laplacian $\Delta_{BC}$. The ellipticity of $\Delta_{BC}$ induces the following \textit{weak} orthogonal Kodaira-type decomposition (which holds for any \textit{complete} Hermitian manifold) for any bidegree ($p,q$) :
\begin{equation}
    L^2_{p,q}(X)=\mathcal{H}_{BC}^{p,q}(X)\oplus\overline{\partial\bar\partial(L^2_{p-1,q-1}(X))}\oplus\overline{\partial^*(L^2_{p+1,q}(X))+\bar\partial^*(L^2_{p,q+1}(X))} \label{42}
\end{equation}
In order to get estimates on the solution of (\ref{41}), we recall some of the fundamental identities for the Bott-Chern Laplacian $\Delta_{BC}$ on Kähler manifolds. For instance, one can show that $\Delta_{BC}$ can be expressed in the following way (\cite{schweitzer2007autour}, Proposition 2.4) :
\begin{equation}
    \Delta_{BC}=(\Delta_{\widetilde{\omega}}'')^2+\partial_{\widetilde{\omega}}^*\partial_{\widetilde{\omega}}+\bar\partial_{\widetilde{\omega}}^*\bar\partial_{\widetilde{\omega}} \label{43}
\end{equation}
In particular : $\mathcal{H}_{BC}^{p,q}(X):=\text{Ker}\Delta_{BC}\cap L^2_{p,q}(X)\simeq\mathcal{H}_\Delta^{p+q}(X)\cap L^2_{p,q}(X)$. 

The formula (\ref{43}) implies that $\text{Spec}\Delta_{BC}\subset\left[\dfrac{\lambda^4}{4},+\infty\right[$, when $\Delta_{BC}$ acts on $L^2$-forms of bidegree ($p,q$) with $p+q\neq n$. Moreover, when $p+q=n$ we have :
\begin{equation}
    \langle\Delta_{BC}\psi,\psi\rangle\geq\dfrac{\lambda^4}{4}\lVert\psi\rVert^2_{L^2_{p,q}(X)}\hspace{4pt},\qquad\forall\psi\in(\mathcal{H}^{p,q}_{BC}(X))^\perp 
\end{equation}
In order to solve the $\bar\partial$-problem (\ref{41}), it is crucial to have a \textit{strong} Kodaira decomposition associated to the Bott-Chern Laplacian. Namely, we have :
\begin{prop}
    Let $X$ be a complete Kähler manifold equipped with a Kähler metric $\omega=\partial\alpha+\bar\partial\beta$, for some bounded forms $\alpha$ and $\beta$ on $X$. Then for any $p,q\in\{0,\dots,n\}$, $\partial\bar\partial(L^2_{p,q}(X))$ and $\partial^*(L^2_{p+1,q}(X))+\bar\partial^*(L^2_{p,q+1}(X))$ are closed.
\end{prop}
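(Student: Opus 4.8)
The plan is to promote the weak Kodaira-type decomposition (\ref{42}) to a strong one by proving that $\Delta_{BC}$ has closed range on $L^2_{p,q}(X)$ in every bidegree; once this is known, the closures in (\ref{42}) can be removed by a soft Hilbert-space argument, so no \emph{a priori} estimate on solutions of the $\bar\partial$-problem (\ref{41}) is needed at this stage.

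First I would show that $\mathrm{Im}\,\Delta_{BC}=(\mathcal{H}^{p,q}_{BC}(X))^\perp$ inside $L^2_{p,q}(X)$. When $p+q\neq n$ this is immediate from the spectral gap $\mathrm{Spec}\,\Delta_{BC}\subset[\lambda^4/4,+\infty[$: being self-adjoint and bounded below by a positive constant, $\Delta_{BC}$ is injective with closed range, while $\overline{\mathrm{Im}\,\Delta_{BC}}=(\ker\Delta_{BC})^\perp=L^2_{p,q}(X)$, so it is onto (in particular $\mathcal{H}^{p,q}_{BC}(X)=0$ there). When $p+q=n$, apply the same reasoning to the restriction of $\Delta_{BC}$ to its reducing subspace $(\mathcal{H}^{p,q}_{BC}(X))^\perp$: the estimate $\langle\Delta_{BC}\psi,\psi\rangle\geq\frac{\lambda^4}{4}\lVert\psi\rVert^2$ valid there gives $\lVert\Delta_{BC}\psi\rVert\geq\frac{\lambda^4}{4}\lVert\psi\rVert$, hence closed range; combined with $\mathrm{Im}\,\Delta_{BC}\subseteq(\mathcal{H}^{p,q}_{BC}(X))^\perp$ (formal self-adjointness) and $\overline{\mathrm{Im}\,\Delta_{BC}}=(\ker\Delta_{BC})^\perp=(\mathcal{H}^{p,q}_{BC}(X))^\perp$, the two coincide. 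Here one uses that $\Delta_{BC}$ is (essentially) self-adjoint on the complete manifold $X$, which is the same input that underlies (\ref{42}).

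Next, reading off the six terms in the definition of $\Delta_{BC}$, each of them lands either in $\partial\bar\partial(L^2_{p-1,q-1}(X))$ or in $\partial^*(L^2_{p+1,q}(X))+\bar\partial^*(L^2_{p,q+1}(X))$, and these two subspaces are orthogonal to $\mathcal{H}^{p,q}_{BC}(X)$ for the same reason their closures appear as summands in (\ref{42}). Writing $A:=\partial\bar\partial(L^2_{p-1,q-1}(X))$ and $B:=\partial^*(L^2_{p+1,q}(X))+\bar\partial^*(L^2_{p,q+1}(X))$, this gives $A+B\subseteq(\mathcal{H}^{p,q}_{BC}(X))^\perp=\mathrm{Im}\,\Delta_{BC}\subseteq A+B$, hence $(\mathcal{H}^{p,q}_{BC}(X))^\perp=A+B$. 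Finally, (\ref{42}) reads $(\mathcal{H}^{p,q}_{BC}(X))^\perp=\overline A\oplus\overline B$ with $\overline A\perp\overline B$; given $\psi\in\overline A$, write $\psi=a+b$ with $a\in A\subseteq\overline A$ and $b\in B\subseteq\overline B$, so $b=\psi-a\in\overline A\cap\overline B=\{0\}$ and therefore $\psi=a\in A$. Hence $A=\overline A$ is closed, and symmetrically $B=\overline B$ is closed, which is exactly the statement.

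The only genuinely analytic step is the first one — extracting closed range of $\Delta_{BC}$ from the spectral estimate, the middle-degree case forcing one to localize to $(\mathcal{H}^{p,q}_{BC}(X))^\perp$; everything afterwards is bookkeeping with the explicit formula for $\Delta_{BC}$ together with elementary orthogonality. (One could instead try to apply Lemma \ref{67} directly to the closed operator $\partial\bar\partial$, but controlling it on $(\ker\partial\bar\partial)^\perp$ looks more delicate than going through the invertibility of $\Delta_{BC}$, so I would favour the route above.)
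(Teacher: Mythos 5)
Your argument is correct, but it is genuinely different from the paper's. You go through the surjectivity of $\Delta_{BC}$ onto $(\mathcal{H}^{p,q}_{BC}(X))^\perp$ (via the spectral gap and self-adjointness), observe that each of the six terms of $\Delta_{BC}\psi$ lies in $A:=\partial\bar\partial(L^2_{p-1,q-1})$ or in $B:=\partial^*(L^2_{p+1,q})+\bar\partial^*(L^2_{p,q+1})$ so that $A+B=(\mathcal{H}^{p,q}_{BC})^\perp$, and then peel off $A=\overline{A}$ and $B=\overline{B}$ by the orthogonality $\overline{A}\perp\overline{B}$; that last splitting trick is clean and correct. The paper instead does precisely the thing you dismissed in your closing parenthesis: it applies Lemma \ref{67} directly to the closed operator $\partial\bar\partial$. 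The point that makes this painless — and which you may have missed — is the inclusion $(\text{Ker}\,\partial\bar\partial)^\perp\subseteq(\text{Ker}\,\partial+\text{Ker}\,\bar\partial)^\perp\subseteq\text{Ker}\,\partial^*\cap\text{Ker}\,\bar\partial^*$, after which two successive applications of the first-order spectral gap (for $\Delta'$ on $\bar\partial\psi$, then for $\Delta''$ on $\psi$) give $\lVert\partial\bar\partial\psi\rVert\geq\frac{\lambda^2}{2}\lVert\psi\rVert$ on $(\text{Ker}\,\partial\bar\partial)^\perp$; the closedness of $B$ is then obtained for free by identifying $B$ with $d^*(L^2_{p+q+2}(X))\cap(L^2_{p+1,q}\oplus L^2_{p,q+1})$ and invoking the already-proved closedness of $\text{Im}\,d^*$ (Proposition \ref{21}). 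What each route buys: the paper's argument only ever applies first-order operators to $\psi$, so it rides entirely on Gaffney-type facts already established for $d$, $\partial$, $\bar\partial$; your route is structurally slicker but leans on two points you should not leave implicit on a complete \emph{non-compact} manifold, namely (i) the genuine (essential) self-adjointness of the fourth-order operator $\Delta_{BC}$, needed to pass from $\overline{\text{Im}\,\Delta_{BC}}=(\text{Ker}\,\Delta_{BC}^*)^\perp$ to $(\mathcal{H}^{p,q}_{BC})^\perp$, and (ii) the domain compatibilities ensuring that for $\psi$ in the domain of that extension the intermediate forms such as $\partial^*\partial\bar\partial\psi$ or $\bar\partial^*\partial^*\psi$ are $L^2$ and lie in the domains of the subsequent operators, so that $\Delta_{BC}\psi$ really decomposes inside $A+B$ and not merely inside $\overline{A}\oplus\overline{B}$. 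These are standard but not free; the paper's more pedestrian route avoids them.
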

\begin{proof}
    By Lemma \ref{67}, the closedness of $\partial\bar\partial(L^2_{p,q}(X))$ is equivalent to :
    \begin{equation}
        \exists C>0\hspace{2pt},\quad\lVert \partial\bar\partial\psi\rVert_{L^2_{p+1,q+1}(X)}\geq C\lVert \psi\rVert_{L^2_{p,q}(X)}\hspace{4pt},\qquad\forall\psi\in L^2_{p,q}(X)\cap(\text{Ker}\partial\bar\partial)^\perp \label{69}
    \end{equation}
Let $\psi=\psi_1+\psi_2\in\text{Ker}\bar\partial+\text{Ker}\partial$, then we have :
\begin{equation*}
    \partial\bar\partial\psi=\partial\bar\partial\psi_1+\partial\bar\partial\psi_2=\partial(\bar\partial\psi_1)-\bar\partial(\partial\psi_2)=0
\end{equation*}
Hence : $\text{Ker}\bar\partial+\text{Ker}\partial\subset\text{Ker}\partial\bar\partial$, this implies that :
\begin{align}
    (\text{Ker}\partial\bar\partial)^\perp&\subset(\text{Ker}\bar\partial+\text{Ker}\partial)^\perp=(\text{Ker}\bar\partial)^\perp\cap(\text{Ker}\partial)^\perp \nonumber \\ &\subset(\text{Im}\bar\partial)^\perp\cap(\text{Im}\partial)^\perp=\text{Ker}\bar\partial^*\cap\text{Ker}\bar\partial^* \label{70}
\end{align}
So let $\psi\in L^2_{p,q}(X)\cap(\text{Ker}\partial\bar\partial)^\perp$. Then $\partial^*\psi=\bar\partial^*\psi=0$ by (\ref{70}). Hence we have :
\begin{align}
    \lVert\partial\bar\partial\psi\lVert^2_{L^2_{p+1,q+1}(X)}&\overset{(a)}{=}\lVert\partial\bar\partial\psi\lVert^2_{L^2_{p+1,q+1}(X)}+\lVert\partial^*\bar\partial\psi\lVert^2_{L^2_{p-1,q+1}(X)}\overset{(b)}{\geq}\dfrac{\lambda^2}{2}\lVert\bar\partial\psi\rVert^2_{L^2_{p,q+1}(X)}\\
    \lVert\bar\partial\psi\rVert^2_{L^2_{p,q+1}(X)}&\overset{(a')}{=}\lVert\bar\partial\psi\rVert^2_{L^2_{p,q+1}(X)}+\lVert\bar\partial^*\psi\rVert^2_{L^2_{p,q-1}(X)}\overset{(b')}{\geq}\dfrac{\lambda^2}{2}\lVert\psi\rVert^2_{L^2_{p,q}(X)}
\end{align}
where ($a$) follows from the fact $\partial^*\bar\partial=-\bar\partial\partial^*$ on a Kähler manifold and $\partial^*\psi=0$, and ($b$) is a consequence of the Remarks \ref{68} and \ref{40}. Similarly, ($a'$) comes from the fact that $\bar\partial^*\psi=0$, and ($b'$) can be deduced from the Remarks \ref{68} and \ref{40}. Hence (\ref{69}) is satisfied with constant $C=\dfrac{\lambda^2}{2}>0$, which means that $\partial\bar\partial(L^2_{p,q}(X))$ is closed.

As for the closedeness of $\partial^*(L^2_{p+1,q}(X))+\bar\partial^*(L^2_{p,q+1}(X))$, it follows from the fact that :
\begin{equation*}
    \partial^*(L^2_{p+1,q}(X))+\bar\partial^*(L^2_{p,q+1}(X))=d^*(L^2_{p,q}(X))=d^*(L^2_{p+q+2}(X))\cap(L^2_{p+1,q}(X)\oplus L^2_{p,q+1}(X))
\end{equation*}
This is closed since $d^*$ has closed images by Proposition \ref{21}, and $L^2_{p+1,q}(X)\oplus L^2_{p,q+1}(X)$ is a closed subspace of $L^2_{p+q+1}(X)$.
\end{proof}
This implies that we have the following \textit{strong} orthogonal decomposition in any bidegree ($p,q$) instead of the one in (\ref{42}) :
\begin{equation}
\begin{cases} 
   L^2_{p,q}(X)=\mathcal{H}_{BC}^{p,q}(X)\oplus\partial\bar\partial(L^2_{p-1,q-1}(X))\oplus(\partial^*(L^2_{p+1,q}(X))+\bar\partial^*(L^2_{p,q+1}(X))) \\
   \mathcal{H}_{BC}^{p,q}(X)=0\hspace{4pt},\qquad\quad \text{if }\hspace{4pt}p+q\neq n
\end{cases} \label{12}
\end{equation}
\begin{lem} The $L^2$-minimal norm solution of (\ref{41}) satisfies the following estimate :
\begin{equation}
    \lVert u\rVert^2_{L^2_{k,0}(X)}\leq2\left(1+\dfrac{4}{\lambda^4}\right)\lVert v\rVert^2_{L^2_{k,0}(X)}
     \label{57}
\end{equation} \vspace{-25pt}
\end{lem}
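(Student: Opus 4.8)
The plan is to construct $u$ explicitly out of the strong Kodaira-type decompositions that we have established, and then estimate each piece separately. First I would apply the strong Bott-Chern decomposition (\ref{12}) in bidegree $(k,0)$ to the datum $v$. Since there are no $(k,-1)$-forms, the term $\bar\partial^*(L^2_{k,1}(X))$ is the only ``second'' component, so we may write $v=h+\partial\bar\partial w+\big(\partial^*a+\bar\partial^*b\big)$ with $h\in\mathcal{H}^{k,0}_{BC}(X)$; if $k\neq n$ then $h=0$, but this turns out not to matter for the estimate. The point is that $h$ and the $\partial^*a$-part are both $\bar\partial$-closed (for $h$ this is (\ref{47}); for $\partial^*a$ use $\bar\partial\partial^*=-\partial^*\bar\partial$ on a Kähler manifold after first splitting $a$ via (\ref{12}) so that the relevant piece is $\bar\partial$-closed), while the $\partial\bar\partial w$-part is obviously $\partial$-closed. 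So the only genuinely troublesome summand of $v$ for the system (\ref{41}) is $\bar\partial^* b$: I would set $v_0:=\bar\partial^* b$, note $\bar\partial v=\bar\partial v_0$, and reduce to solving $\bar\partial u=\bar\partial v_0$, $\partial u=0$.

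Next I would produce a candidate solution. Write $\bar\partial v_0=\bar\partial\bar\partial^* b$; since $b\in L^2_{k,1}(X)$ can itself be taken (by the closed-range statement, Remark \ref{50}, and the decomposition for $\bar\partial$) in $(\mathrm{Ker}\,\bar\partial^*)^\perp=\overline{\mathrm{Im}\,\bar\partial}$, and in fact in the closed subspace $\mathrm{Im}\,\bar\partial$, the Laplacian-type estimate (\ref{66}) in degree $k+1\neq n$ (recall $k+1\le n$ here, and when $k=n$ the $(n,0)$-component argument still applies because the $(n,0)$-forms are never in the middle total degree issue for $\bar\partial$ — more precisely one uses that $\Delta''$ has spectral gap on $(k,1)$-forms with $k+1\neq n$, and the boundary case is handled by the orthogonality to harmonics) gives control of the minimal-norm primitive. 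Concretely, let $g$ be the $L^2$-minimal solution of $\bar\partial^*\bar\partial g=\bar\partial^*\bar\partial v_0$ lying in $(\mathrm{Ker}\,\bar\partial)^\perp$; equivalently take $u:=$ the $\bar\partial$-harmonic-orthogonal projection business, but cleanest is: let $u$ be the $L^2$-minimal solution of $\bar\partial u=\bar\partial v_0$. Then by (\ref{66})–(\ref{40}) (the $\bar\partial$-Laplacian spectral gap, Remark \ref{40}), $\|u\|^2\le \tfrac{2}{\lambda^2}\|\bar\partial u\|^2=\tfrac{2}{\lambda^2}\|\bar\partial v_0\|^2$. One then checks $\partial u=0$ automatically: since $u\perp\mathrm{Ker}\,\bar\partial$, we have $u\in\overline{\mathrm{Im}\,\bar\partial^*}$, and on a Kähler manifold $\partial$ maps $\mathrm{Im}\,\bar\partial^*$ into $\mathrm{Im}\,\bar\partial^*$ by $\partial\bar\partial^*=-\bar\partial^*\partial$, hence $\partial u\in\overline{\mathrm{Im}\,\bar\partial^*}$; but also $\bar\partial\partial u=-\partial\bar\partial u=-\partial\bar\partial v_0=0$ so $\partial u\in\mathrm{Ker}\,\bar\partial$, and $\overline{\mathrm{Im}\,\bar\partial^*}\cap\mathrm{Ker}\,\bar\partial=0$, giving $\partial u=0$ as required.

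It then remains to turn $\|\bar\partial v_0\|^2$ back into $\|v\|^2$. Here I would use that $\bar\partial v_0=\bar\partial v$ (the other components of $v$ are $\bar\partial$-closed), and that $\|\bar\partial v\|\le$ (something)$\cdot\|v\|$ is \emph{not} true in general, so instead I would avoid differentiating: work directly with $v_0=\bar\partial^* b$ and observe that in the orthogonal decomposition of $v$ the summand $v_0$ has $\|v_0\|\le\|v\|$. Then apply (\ref{66})/(\ref{40}) to the primitive in the form $\|u\|^2\le\tfrac{2}{\lambda^2}\|\bar\partial u\|^2$ and bound $\|\bar\partial u\|^2=\|\bar\partial v_0\|^2$ using the spectral gap \emph{again}, this time as $\|\bar\partial\psi\|^2\ge\tfrac{\lambda^2}{2}\|\psi\|^2$ read in reverse together with the elliptic estimate $\|\bar\partial v_0\|^2\le\langle\Delta'' b,\,\cdot\,\rangle$-type bounds; the cleanest route is: $\|\bar\partial v_0\|^2=\|\bar\partial\bar\partial^* b\|^2\le\langle\Delta''\bar\partial^* b,\bar\partial^* b\rangle=\langle\Delta'' v_0,v_0\rangle$, and since $v_0\in(\mathrm{Ker}\,\Delta'')^\perp$ one does \emph{not} immediately get an upper bound from the gap (the gap is a lower bound). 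So the correct move is the one in the statement's constant $2(1+4/\lambda^4)$: split $u=u'+u''$ where $u'$ solves the equation and lies in $(\mathrm{Ker}(\partial\oplus\bar\partial))^\perp$, estimate the $\bar\partial$-part by $\tfrac{2}{\lambda^2}$ and then a $\Delta_{BC}$-estimate by $\tfrac{4}{\lambda^4}$ coming from (\ref{43}), and the factor $2$ absorbs an application of $\|a+b\|^2\le 2\|a\|^2+2\|b\|^2$. \textbf{The main obstacle} will be precisely this bookkeeping: getting an \emph{upper} bound on the primitive in terms of $v$ (not of $\bar\partial v$), which forces one to use the $\Delta_{BC}$ spectral gap $\mathrm{Spec}\,\Delta_{BC}\subset[\lambda^4/4,\infty)$ on the $(k,0)$-component of $v$ orthogonal to $\mathcal{H}^{k,0}_{BC}$, producing the $4/\lambda^4$ term, and combining it cleanly with the $\Delta''$-gap to land on the stated constant $2(1+4/\lambda^4)$.
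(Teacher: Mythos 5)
There is a genuine gap, and you in fact point to it yourself: your route bounds the minimal solution by $\lVert u\rVert^2\leq\tfrac{2}{\lambda^2}\lVert\bar\partial u\rVert^2=\tfrac{2}{\lambda^2}\lVert\bar\partial v\rVert^2$, and there is no way to dominate $\lVert\bar\partial v\rVert$ by a constant times $\lVert v\rVert$ ($\bar\partial$ is unbounded on $L^2$, and the spectral gap only gives the inequality in the useless direction, as you note). The final paragraph of your proposal, where you reverse-engineer the constant $2\left(1+\tfrac{4}{\lambda^4}\right)$ from a hypothetical splitting $u=u'+u''$, is not an argument: you label it ``the main obstacle'' and leave it unresolved, so the estimate (\ref{57}) is never actually derived. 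The preliminary reductions (isolating the $\bar\partial^*b$-component of $v$, and the verification that the minimal $\bar\partial$-primitive is automatically $\partial$-closed on a Kähler manifold) are fine but do not touch the real difficulty.

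The missing idea is to write the minimal solution explicitly through the Green operator of the Bott--Chern Laplacian rather than through a primitive of $\bar\partial v$. The strong decomposition (\ref{12}) gives (as in \cite{dinew2021generalised}, Lemma 2.4) the closed formula $u=\Delta_{BC}^{-1}\left(\bar\partial^*\bar\partial v+\bar\partial^*\partial\partial^*\bar\partial v\right)$, which, using $\partial\bar\partial v=0$, $\bar\partial^*v=0$ (bidegree), the BKN identity of Remark \ref{56} and the commutation of $\Delta_{BC}$ with $\Delta''$, collapses to $u=\Delta''\Delta_{BC}^{-1}v+(\Delta'')^2\Delta_{BC}^{-1}v$. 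Now \emph{all} derivatives sit to the left of the bounded operator $\Delta_{BC}^{-1}$ (of norm $\leq 4/\lambda^4$ off the Bott--Chern harmonic space), and the identity $\langle\Delta_{BC}\psi,\psi\rangle=\lVert\partial\psi\rVert^2+\lVert\bar\partial\psi\rVert^2+\lVert\Delta''\psi\rVert^2$ coming from (\ref{43}) converts $\lVert\Delta''\Delta_{BC}^{-1}v\rVert^2$ into $\langle v,\Delta_{BC}^{-1}v\rangle\leq\tfrac{4}{\lambda^4}\lVert v\rVert^2$ and $\lVert(\Delta'')^2\Delta_{BC}^{-1}v\rVert^2$ into $\langle v,(\Delta'')^2\Delta_{BC}^{-1}v\rangle\leq\lVert v\rVert^2$ via Cauchy--Schwarz; the elementary inequality $\lVert a+b\rVert^2\leq2(\lVert a\rVert^2+\lVert b\rVert^2)$ then yields exactly (\ref{57}). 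Without this device of passing the estimate through $\Delta_{BC}^{-1}$ applied directly to $v$, no bound in terms of $\lVert v\rVert$ alone is reachable.
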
 
\begin{proof} 
    The solution of (\ref{41}) is unique up to $\text{Ker}\partial\cap\text{Ker}\bar\partial=\text{Ker}\Delta_{BC}\oplus\text{Im}\partial\bar\partial$. Hence the decomposition (\ref{12}) allows us to get the following formula for the minimal $L^2$-norm solution of (\ref{41}) (see \cite{dinew2021generalised}, proof of Lemma 2.4) :
    \begin{equation*}
        u=\Delta_{BC}^{-1}(\bar\partial^*\bar\partial v+\bar\partial^*\partial\partial^*\bar\partial v)
    \end{equation*}
    where $\Delta_{BC}^{-1}$ is the \textbf{Green operator} associated to $\Delta_{BC}$, which is well-defined on $L^2_{p,0}(\widetilde{X})$ ; it is exactly the inverse of $\Delta_{BC}$ if $k\neq n$. If $k=n$, then $\Delta_{BC}^{-1}$ is the inverse of $\Delta_{BC}$ on $(\mathcal{H}^{n,0}_{BC}(X))^\perp$, which extend by 0 across $\mathcal{H}^{n,0}_{BC}(X)$. Hence we get :
\begin{align}
    u&=\Delta_{BC}^{-1}(\bar\partial^*\bar\partial v+\bar\partial^*\Delta'\bar\partial v) \label{52} \\ &=\Delta_{BC}^{-1}(\bar\partial^*\bar\partial v+\Delta''\bar\partial^*\bar\partial v) \label{53} \\ &=\Delta_{BC}^{-1}(\Delta'' v+(\Delta'')^2v) \label{54}
    \\ &=\Delta''\Delta_{BC}^{-1}v+(\Delta'')^2\Delta_{BC}^{-1}v \label{55} 
\end{align}
where (\ref{52}) comes from the fact that $\partial\bar\partial v=0$, as this condition is imposed on $v$ in order for (\ref{41}) to be solvable. (\ref{53}) is a consequence of the BKN identity in Remark \ref{56} and  $\Delta''\bar\partial^*=\bar\partial^*\Delta''$ on Kähler manifolds. (\ref{54}) comes from the fact that $\bar\partial^*v=0$ for bidegree reasons. As for (\ref{55}), it holds true since it is easy to check that $\Delta_{BC}$ commutes with both $\bar\partial^*\bar\partial$ and $\bar\partial\bar\partial^*$, and hence it commutes with $\Delta''$, which implies that $\Delta_{BC}^{-1}$ also commutes with $\Delta''$ thanks to the Kählerianity of $X$. Hence we get the following inequality :
\begin{equation}
    \lVert u\rVert^2_{L^2_{k,0}(X)}\leq2\left(\lVert\Delta''\Delta_{BC}^{-1}v\rVert^2_{L^2_{k,0}(X)}+\lVert(\Delta'')^2\Delta_{BC}^{-1}v\rVert^2_{L^2_{k,0}(X)}\right)
\end{equation}
Thanks to the formula (\ref{43}), a similar proof of (\ref{39}) yields for any $\psi\in L^2_{p,q}(X)$ :
\begin{equation}
    \langle\Delta_{BC}\psi,\psi\rangle=\lVert \partial\psi\rVert^2_{L^2_{p+1,q}(X)}+\lVert \bar\partial\psi\rVert^2_{L^2_{p,q+1}(X)}+\lVert \Delta''\psi\rVert^2_{L^2_{p,q}(X)}
\end{equation}
in any bidegree ($p,q$). Hence, we obtain the following estimates :
\begin{align}
  \lVert\Delta''\Delta_{BC}^{-1}v\rVert^2_{L^2_{k,0}(X)}&\leq\langle\Delta_{BC}\Delta_{BC}^{-1}v,\Delta_{BC}^{-1}v\rangle=\langle v,\Delta_{BC}^{-1}v\rangle\leq\frac{4}{\lambda^4}\lVert v\rVert^2_{L^2_{k,0}(X)} \\ \lVert(\Delta'')^2\Delta_{BC}^{-1}v\rVert^2_{L^2_{k,0}(X)}&\leq\langle\Delta_{BC}\Delta''\Delta_{BC}^{-1}v,\Delta''\Delta_{BC}^{-1}v\rangle=\langle v,(\Delta'')^2\Delta_{BC}^{-1}v\rangle \\ &\overset{(*)}{\leq} \lVert v\rVert_{L^2_{k,0}(X)}\lVert(\Delta'')^2\Delta_{BC}^{-1}v\rVert_{L^2_{k,0}(X)}\leq\lVert v\rVert^2_{L^2_{k,0}(X)}
\end{align}
where ($*$) comes from the use of the Cauchy-Schwarz inequality. Thus, we get the desired estimate (\ref{57}) for the $L^2$-norm of the $L^2$-minimal solution $u$ of (\ref{41}). 
\end{proof}
\begin{rmk} Let $A\subset\widetilde{X}$ be the support of $v$, then the $L^2$-minimal norm solution $u$ to the problem (\ref{41}) is also supported in $A$. Indeed, suppose $\text{supp}(u)\not\subset A$, then we take :
\begin{equation}
u'=\begin{cases}
    u \quad\text{on }A \\ 0\quad\text{on } \widetilde{X}\setminus A
\end{cases}
\end{equation}
Then $u'$ is clearly a solution to (\ref{41}). Moreover we have : $\lVert u'\rVert_{L^2_{k,0}(X)}\leq\lVert u\rVert_{L^2_{k,0}(X)}$, which forces $\lVert u'\rVert_{L^2_{k,0}(X)}=\lVert u\rVert_{L^2_{k,0}(X)}$ by the minimality of $u$. Hence $\text{supp}(u)\subset A$. \label{59}
\end{rmk}
The same results and estimates can be obtained for the following "conjugate" problem of the $\bar\partial$-equation (\ref{41}) :
\begin{equation}
\begin{cases}
\partial u=\partial v\hspace{2pt},\qquad v\in L^2_{0,l}(X)\hspace{2pt} \\
\bar\partial u=0 \label{60}
\end{cases} 
\end{equation}
Namely, one can show that the $L^2$-minimal norm solution satisfies :
\begin{equation}
\begin{cases}
u=\Delta_{BC}^{-1}(\partial^*\partial v+\partial^*\bar\partial\bar\partial^*\partial v)=\Delta'\Delta_{BC}^{-1}v+(\Delta')^2\Delta_{BC}^{-1}v \\
\lVert u\rVert^2_{L^2_{0,k}(X)}\leq2\left(1+\dfrac{4}{\lambda^4}\right)\lVert v\rVert^2_{L^2_{0,k}(X)}  \label{61}
\end{cases}
\end{equation}
Now we are able to prove Theorem \ref{58} :
\begin{pr1}
     Let $\omega$ be a Kähler metric on $X$ such that $\widetilde{\omega}=\partial\alpha+\bar\partial\beta$, where $\alpha$ and $\beta$ are bounded ($0,1$) and ($1,0$)-forms respectively on $\widetilde{X}$, in particular they satisfy the ($F_2$) property. That is, for any fundamental domain $D$ of $\widetilde{X}$ we have :
\begin{equation}
\begin{cases}
\exists\hspace{1pt}C_\alpha>0\hspace{1pt},\quad\underset{\gamma\in\pi_1(X)}{\sup}\lVert\alpha\rVert_{L^2_{\widetilde{\omega}}(\gamma D)}\leq C_\alpha \\
\exists\hspace{1pt}C_\beta>0\hspace{1pt},\quad\underset{\gamma\in\pi_1(X)}{\sup}\lVert\beta\rVert_{L^2_{\widetilde{\omega}}(\gamma D)}\leq C_\beta
\end{cases}
\end{equation}
Consider for any $\gamma\in\pi_1(X)$ the following :
\begin{equation}
\alpha_\gamma=\begin{cases}
    \alpha \quad\text{on }\gamma\overline{D} \\ 0\quad\text{elsewhere}
\end{cases} \quad,\qquad\qquad \beta_\gamma=\begin{cases}
\beta \quad\text{on }\gamma\overline{D} \\ 0\quad\text{elsewhere}
\end{cases}
\end{equation}
So we get : $\alpha=\underset{\gamma\in\pi_1(X)}{\sum}\alpha_\gamma$ and $\beta=\underset{\gamma\in\pi_1(X)}{\sum}\beta_\gamma$. Moreover, we have for any $\gamma\in\pi_1(X)$ :
\begin{equation}
    \lVert \alpha_\gamma\rVert_{L^2_{1,0}(\widetilde{X})}=\lVert \alpha\rVert_{L^2_{1,0}(\gamma D)}\leq C_\alpha \qquad\text{and}\qquad \lVert \beta_\gamma\rVert_{L^2_{1,0}(\widetilde{X})}=\lVert\beta\rVert_{L^2_{0,1}(\gamma D)}\leq C_\beta
\end{equation}
We consider for every $\gamma\in\pi_1(X)$, the $L^2$-minimal norm solutions $\alpha'_\gamma$ and $\beta'_\gamma$ for the problems (\ref{41}) and (\ref{60}) respectively with $v=\alpha_\gamma$ (resp. $v=\beta_\gamma$). Then Remark \ref{59} yields that $\text{supp}(\alpha_\gamma')\subseteq\gamma\overline{D}$ and $\text{supp}(\beta_\gamma')\subseteq\gamma\overline{D}$. Set $\alpha'=\underset{\gamma\in\pi_1(X)}{\sum}\alpha'_\gamma$ and $\beta'=\underset{\gamma\in\pi_1(X)}{\sum}\beta'_\gamma$, then :
\begin{equation}
\begin{cases}
\partial\alpha'=\partial\alpha\hspace{2pt} \\
\bar\partial\alpha'=0 \label{64}
\end{cases} \qquad\quad\text{and}\qquad\quad
\begin{cases}
\bar\partial\beta'=\bar\partial\beta \\
\partial\beta'=0
\end{cases} 
\end{equation} 
We set the $1$-form $\theta=\alpha'+\beta'$, then $\widetilde{\omega}=d\theta$ with :
\begin{align}
\lVert \theta\rVert_{L^2_1(\gamma D)}&\leq\lVert\alpha'\rVert_{L^2_{1,0}(\gamma D)}+\lVert\beta'\rVert_{L^2_{0,1}(\gamma D)}\\ &\leq2\left(1+\dfrac{4}{\lambda^4}\right)\left(\lVert\alpha\rVert_{L^2_{1,0}(\gamma D)}+\lVert\beta\rVert_{L^2_{0,1}(\gamma D)}\right)\qquad\text{(by (\ref{57}) and (\ref{61}))} \\
&\leq2\left(1+\dfrac{4}{\lambda^4}\right)(C_\alpha+C_\beta)=:C_\theta\quad,\qquad\forall\gamma\in\pi_1(X)
\end{align}
where $C_\theta>0$ does not depend on $\gamma$, hence $\theta$ satisfies the ($F_2$) property on $D$, which means that $X$ is $L^2$-Kähler hyperbolic. \vspace{-15pt}
\end{pr1}
\begin{flushright}
    $\square$
\end{flushright}
Combining Theorem \ref{62} with Theorem \ref{58}, we get our second deformation result :
\begin{cor} Let $\sigma:\mathscr{X}\longrightarrow B$ be a holomorphic family of compact complex manifolds $X_t=\sigma^{-1}(t)$, with $t\in B$. Fix an arbitrary reference point $0\in B$. If $X_0$ is Kähler hyperbolic, then the nearby fibers are $L^2$-Kähler hyperbolic. \label{9}
\end{cor}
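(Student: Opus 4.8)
The plan is to chain Theorem \ref{62}, in the extreme case $p=+\infty$, with Theorem \ref{58}; the only point requiring attention is a bookkeeping observation about what the proof of Theorem \ref{62} actually delivers. Since $X_0$ is Kähler hyperbolic it is $L^\infty$-Kähler hyperbolic by Remark \ref{49}, so Theorem \ref{62} applies with $p=+\infty$ and provides a neighborhood $U\ni 0$ over which every fiber $X_t$ is Kähler. First I would revisit the construction carried out in the proof of Theorem \ref{62}: for each $t\in U$ it produces, out of $\omega_0$ and its Bott--Chern--harmonic projections for the complex structure $\mathcal{J}_t$, a Kähler metric $\omega^{1,1}_t$ on $X_t$ (here Lemma \ref{34} and the smoothness in $t$ are used) together with forms $\alpha_t=\eta^{0,1}_t-\pi^*v^{0,1}_t$ and $\beta_t=\eta^{1,0}_t-\pi^*v^{1,0}_t$ on $\widetilde{X}_t$ satisfying $\widetilde{\omega}^{1,1}_t=\partial_t\alpha_t+\bar\partial_t\beta_t$; in the case $p=+\infty$ these $\alpha_t,\beta_t$ are bounded because $\eta$ is $\widetilde{\omega}_0$-bounded and $v_t$ is smooth on the compact manifold $X_t$. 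Consequently $\omega^{1,1}_t$ is a genuinely \emph{Kähler} metric on $X_t$ which is $(\widetilde{\partial+\bar\partial})$-bounded, i.e. a Kähler SKT hyperbolic metric in the sense of the hypothesis of Theorem \ref{58}.

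Next I would apply Theorem \ref{58} fiberwise. For each $t\in U$ the compact Kähler manifold $X_t$ carries the Kähler metric $\omega^{1,1}_t$ which is SKT hyperbolic; the completeness of $\widetilde{X}_t$ required by Theorem \ref{58} is automatic since $X_t$ is compact, and the boundedness of $\alpha_t,\beta_t$ is exactly what was recorded above. Hence Theorem \ref{58} gives that $\omega^{1,1}_t$ is $L^2$-Kähler hyperbolic, and therefore $X_t$ is $L^2$-Kähler hyperbolic. Letting $t$ range over $U$ yields the assertion.

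The main --- and essentially the only --- obstacle is the point isolated in the first paragraph. The \emph{statement} of Theorem \ref{62} only asserts that the nearby fibers are $L^\infty$-SKT hyperbolic, a property that a priori involves some SKT metric and is too weak to be fed directly into Theorem \ref{58}, whose hypothesis demands a genuinely Kähler $(\widetilde{\partial+\bar\partial})$-bounded metric. One therefore has to open up the proof of Theorem \ref{62} and notice that the metric it constructs, $\omega^{1,1}_t$, lies in a $\mathscr{C}^\infty$-family of Kähler metrics, hence is itself Kähler. Beyond this identification nothing new is needed: the required $L^2$-control on the $d$-potential of $\widetilde{\omega}^{1,1}_t$ is produced for free by the estimates $(\ref{57})$ and $(\ref{61})$ for the auxiliary $\bar\partial$-problems $(\ref{41})$ and $(\ref{60})$, precisely as in the proof of Theorem \ref{58}.
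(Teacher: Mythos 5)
Your proposal is correct and follows the same route as the paper, which proves the corollary simply by stating that it follows from combining Theorem \ref{62} with Theorem \ref{58}. Your additional observation --- that the \emph{statement} of Theorem \ref{62} only delivers an SKT metric, and that one must open its proof to see that the metric $\omega^{1,1}_t$ constructed there is in fact Kähler and $(\widetilde{\partial+\bar\partial})$-bounded, as the hypothesis of Theorem \ref{58} requires --- is a legitimate clarification of a point the paper leaves implicit.
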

\begin{rmk}
     Based on Observation 4.4 in \cite{popovici2013deformation}), one may conjecture the following :
\end{rmk} 
\begin{conj}
    Let $X$ be a compact complex surface. Then :
    \begin{center}
        $X$ is Kähler hyperbolic $\Longleftrightarrow$ $X$ is sG hyperbolic.
    \end{center}
\end{conj}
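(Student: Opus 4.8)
The plan is to prove the two implications separately: the first is immediate, while the second requires essentially all of the machinery of Section~4 together with one new ingredient that I expect to be the genuine difficulty.

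\emph{Kähler hyperbolic $\Rightarrow$ sG hyperbolic.} Here I would take the witnessing $\widetilde d$-bounded Kähler metric $\omega$ itself. Writing $\widetilde\omega=d\eta$ with $\eta$ bounded and using $\dim_{\C}X=2$, the form $\omega^{n-1}=\omega$ is already the $(1,1)$-component of the real $d$-closed $2$-form $\Omega:=\omega$, one has $\pi^{*}\Omega=\widetilde\omega=d\eta$ with $\eta$ bounded, and $\partial\omega^{n-1}=\partial\omega=0$ is trivially $\bar\partial$-exact, so $\omega$ is an sG metric. Thus $\omega$ already exhibits $X$ as sG hyperbolic, with no further input needed.

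\emph{sG hyperbolic $\Rightarrow$ Kähler hyperbolic: reduction.} Start from an sG metric $\omega$ and a real $d$-closed $2$-form $\Omega=\omega+\Omega^{2,0}+\overline{\Omega^{2,0}}$ with $\pi^{*}\Omega=d\eta$, $\eta$ bounded. First I would note that sG hyperbolicity forces $\widetilde X$ to contain no compact curve (for such a $C$ one would get $0=\int_{C}d\eta=\int_{C}\pi^{*}\omega>0$), hence $X$ has no rational curves and $K_{X}$ is nef; since, as in the Kähler and SKT cases (cf.~\cite{ma2024strongly}), sG hyperbolic manifolds are Kobayashi — hence Brody — hyperbolic, the Enriques–Kodaira classification forces $X$ to be a minimal surface of general type with $K_{X}$ ample, in particular projective and Kähler (alternatively, invoke Observation~4.4 of \cite{popovici2013deformation}, whereby on surfaces the sG and Kähler conditions already coincide). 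On a surface every sG metric is pluriclosed, and extracting the $(1,1)$-part of $\widetilde\omega=(d\eta)^{1,1}$ gives $\widetilde\omega=\partial\eta^{0,1}+\bar\partial\eta^{1,0}$ with $\eta^{0,1},\eta^{1,0}$ bounded, so $\omega$ is an SKT metric realising $X$ as SKT hyperbolic. Next, by the $\partial\bar\partial$-lemma on the Kähler surface $X$, replace $\omega$ by a $d$-closed $(1,1)$ representative of its Aeppli class, $\omega_{0}=\omega-\partial\phi-\bar\partial\bar\phi$ with $\phi$ a smooth $(0,1)$-form on $X$; the de Rham class $a=\{\omega_{0}\}$ is nef (Kleiman), satisfies $\int_{C}a=\int_{C}\omega>0$ for every curve $C$ and $a\cdot K_{X}>0$, and once one checks $a^{2}>0$ the Buchdahl–Lamari criterion shows $a$ is a Kähler class. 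Fixing a Kähler form $\omega_{K}\in a$ and writing $\omega_{K}=\omega_{0}+i\partial\bar\partial f$, all correction terms pull back to pullbacks of smooth forms on the compact $X$, so $\widetilde\omega_{K}=\partial\alpha'+\bar\partial\beta'$ with $\alpha',\beta'$ bounded; i.e.\ $\omega_{K}$ is a Kähler metric that is SKT hyperbolic, and Theorem~\ref{58} yields that $\omega_{K}$ — hence $X$ — is $L^{2}$-Kähler hyperbolic.

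\emph{The main obstacle.} What remains is to upgrade this $(\widetilde{\partial+\bar\partial})$-bounded, a fortiori $L^{2}$, conclusion to the $\widetilde d$-boundedness required by Kähler hyperbolicity — equivalently, to prove ``Kähler $+$ SKT hyperbolic $\Rightarrow$ Kähler hyperbolic'' on surfaces, which genuinely goes beyond Theorem~\ref{58} (that produces only an $(F_{2})$ potential). After symmetrising one may take $\alpha'=\overline{\beta'}$, and then $\widetilde\omega_{K}=d(\beta'+\overline{\beta'})-\bigl(\partial\beta'+\overline{\partial\beta'}\bigr)$, where $\partial\beta'$ is a $d$-closed $(2,0)$-form on $\widetilde X$ — that is, a holomorphic $2$-form — which is moreover $\partial$-exact with a bounded primitive; thus $\omega_{K}$ is $\widetilde d$-bounded exactly when the real $d$-closed $(2,0)+(0,2)$-form $\Psi=\partial\beta'+\overline{\partial\beta'}$ admits a bounded $d$-primitive. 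I would attack this by $L^{2}$-Hodge theory on $\widetilde X$: degree $2$ is the middle degree, so the available spectral gaps (Theorem~\ref{33} and its SKT analogue) do not kill the relevant cohomology directly, but one can try to exploit that the holomorphic $2$-form $\partial\beta'$ is $L^{2}$-negligible because it is $\partial$-exact with a bounded primitive, combine this with Gaffney-type and heat-kernel estimates in the spirit of the end of the proof of Theorem~2.1 in \cite{chen2021euler}, and use the ampleness of $K_{X}$ to control the space of $L^{2}$ holomorphic $2$-forms on $\widetilde X$. This last step — and, to a lesser degree, the verification that $a^{2}>0$ — is where I expect the real content of the conjecture to lie; everything preceding it is comparatively formal.
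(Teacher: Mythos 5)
The statement you are trying to prove is not proved in the paper at all: it is stated explicitly as a \emph{conjecture}, motivated by Observation 4.4 in \cite{popovici2013deformation}, and the author only remarks that a proof would combine with Theorem \ref{10} to give deformation openness of Kähler hyperbolicity in dimension $2$. So there is no proof of the paper's to compare yours against, and your proposal does not close the gap either. The forward implication you give (a $\widetilde{d}$-bounded Kähler metric on a surface is itself an sG metric witnessing sG hyperbolicity, with $\Omega=\omega$) is correct and essentially trivial. For the converse, your chain of reductions --- extracting the $(1,1)$-part of $d\eta$ to get a $(\widetilde{\partial+\bar\partial})$-bounded SKT metric, passing to a $d$-closed Aeppli representative, producing a Kähler class, and applying Theorem \ref{58} --- at best lands you on ``$X$ is Kähler and admits a Kähler metric that is SKT hyperbolic, hence is $L^2$-Kähler hyperbolic.'' The remaining step, upgrading $(\widetilde{\partial+\bar\partial})$-boundedness (or the $(F_2)$ property) of the potential to genuine $\widetilde{d}$-boundedness, is precisely the \emph{other} open conjecture stated at the end of Section 4 (``a Kähler metric which is SKT hyperbolic implies Kähler hyperbolic''), which the author explicitly says would require solving the $\bar\partial$-problems (\ref{41}) and (\ref{60}) with $L^\infty$- rather than $L^2$-estimates. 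You acknowledge this yourself, so your proposal is a (reasonable) reduction of one conjecture to another, not a proof.

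Beyond the acknowledged main gap, two intermediate steps in your reduction are also not justified: the claim that sG hyperbolic manifolds are Kobayashi/Brody hyperbolic is asserted in the paper only for Kähler hyperbolic and SKT hyperbolic manifolds, not for sG hyperbolic ones, so citing \cite{ma2024strongly} for it needs checking; and the positivity $a^2>0$ needed for the Buchdahl--Lamari criterion is left unverified. If you want to salvage something publishable from this, the honest formulation is: ``sG hyperbolic $\Rightarrow$ Kähler carrying an SKT hyperbolic Kähler metric $\Rightarrow$ $L^2$-Kähler hyperbolic,'' with the final upgrade to $\widetilde{d}$-boundedness stated as the remaining open problem.
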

If this can be proved, Theorem \ref{10} would imply that Kähler hyperbolicity is \textbf{open} under holomorphic deformations in complex dimension 2. In higher dimensions, a case where we know that Kähler hyperbolicity is open under small deformations is the following :
\begin{thm}
    Let $\sigma:\mathscr{X}\longrightarrow B$ be a holomorphic family of compact complex manifolds $X_t=\sigma^{-1}(t)$, with $t\in B$. Fix an arbitrary reference point $0\in B$. If $X_0$ Kähler hyperbolic and the hodge number $h^{2,0}(X_0)=0$, then the nearby fibers $X_t$ are also Kähler hyperbolic. \label{72}
\end{thm}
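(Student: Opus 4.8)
The plan is to re-run the proof of Theorem \ref{62}, observing that the hypothesis $h^{2,0}(X_0)=0$ forces the pure-type correction terms $\omega_t^{2,0}$ and $\omega_t^{0,2}$ occurring there to vanish, which upgrades the conclusion from $(\widetilde{\partial+\bar\partial})$-boundedness to genuine $\widetilde{d}$-boundedness.

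First I would note that, since the Hodge numbers are constant in the family, $h^{2,0}(X_t)=h^{2,0}(X_0)=0$ and hence $h^{0,2}(X_t)=h^{2,0}(X_t)=0$ for every $t$ in a neighbourhood $U$ of $0$; consequently the Bott-Chern harmonic spaces $\mathcal{H}^{2,0}_{BC}(X_t)$ and $\mathcal{H}^{0,2}_{BC}(X_t)$ are trivial. Then I would carry out the construction in the proof of Theorem \ref{62}: pick a Kähler metric $\omega_0$ on $X_0$ with $\widetilde{\omega}_0=d\eta$ for a bounded $1$-form $\eta$, deform it to a $\mathscr{C}^\infty$ family of Kähler metrics $\gamma_t$ with $\gamma_0=\omega_0$, and set $\omega_t^{r,s}:=F_t^{r,s}\big((\omega_0)^{r,s}_{\mathcal{J}_t}\big)$, where $F_t^{r,s}$ is the $L^2_{\gamma_t}$-orthogonal projection onto $\mathcal{H}^{r,s}_{BC}(X_t)$. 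Because the target spaces are trivial for $(r,s)\in\{(2,0),(0,2)\}$, we get $\omega_t^{2,0}=\omega_t^{0,2}=0$, so the decomposition (\ref{23}) of $\omega_0$ on $X_t$ collapses to
\begin{equation*}
\omega_0=\omega_t^{1,1}+dv_t,\qquad v_t\in\mathscr{C}_1^\infty(X_t),
\end{equation*}
with $v_t$ smooth (the $L^2$-minimal solution of $dv_t=u_t$) and $\omega_t^{1,1}$ a $d$-closed real $(1,1)$-form --- it is Bott-Chern harmonic, hence $\partial_t$- and $\bar\partial_t$-closed --- which is $\mathscr{C}^\infty$-close to $\omega_0^{1,1}=\omega_0>0$ and therefore a genuine Kähler metric on $X_t$ for $t\in U$ (cf.\ Lemma \ref{34}).

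Next I would pull back to the universal cover. From $\widetilde{\omega}_0=\widetilde{\omega}_t^{1,1}+d\pi^*v_t=d\eta$ we obtain
\begin{equation*}
\widetilde{\omega}_t^{1,1}=d\big(\eta-\pi^*v_t\big),
\end{equation*}
so it only remains to show that $\eta-\pi^*v_t$ is bounded with respect to $\widetilde{\omega}_t^{1,1}$. The form $\eta$ is bounded with respect to $\widetilde{\omega}_0$ by hypothesis, hence with respect to $\widetilde{\omega}_t^{1,1}$ as well, since $\omega_0$ and $\omega_t^{1,1}$ are two smooth metrics on the common underlying compact manifold and are therefore quasi-isometric, a property preserved under pullback to $\widetilde{X}_t$; and $\pi^*v_t$ is bounded because $v_t$ is a smooth $1$-form on the compact manifold $X_t$. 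Hence $\omega_t^{1,1}$ is $\widetilde{d}$-bounded, i.e.\ $X_t$ is Kähler hyperbolic for all $t$ near $0$.

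I do not expect a genuine obstacle beyond the bookkeeping already present in the proof of Theorem \ref{62}; the only new input is the vanishing of $\mathcal{H}^{2,0}_{BC}(X_t)$ and $\mathcal{H}^{0,2}_{BC}(X_t)$, and the one point that deserves care is the quasi-isometry argument transferring boundedness of $\eta$ from $\widetilde{\omega}_0$ to $\widetilde{\omega}_t^{1,1}$.
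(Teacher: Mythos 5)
Your proof is correct, but it takes a genuinely different route from the paper's. The paper's own argument for Theorem \ref{72} is purely cohomological and much softer: since $h^{2,0}(X_t)=h^{0,2}(X_t)=0$, the Hodge decomposition gives $H^{1,1}(X_t,\R)=H^2_{DR}(X,\R)$, so the Kähler cone $\mathcal{K}_{X_t}$ is an open subset of the full de Rham space; combining this with the fact that the space of hyperbolic classes $V^2_{hyp}$ in degree $2$ depends only on $\pi_1(X)$ (Remark \ref{71}, which rests on external references), the non-emptiness of $\mathcal{K}_{X_0}\cap V^2_{hyp}(X_0)$ propagates to $\mathcal{K}_{X_t}\cap V^2_{hyp}(X_t)$ for $t$ near $0$. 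Your argument instead re-runs the analytic machinery of Theorem \ref{62} and observes that the vanishing of $\mathcal{H}^{2,0}_{BC}(X_t)$ and $\mathcal{H}^{0,2}_{BC}(X_t)$ kills the pure-type terms, so that $\widetilde{\omega}^{1,1}_t=d(\eta-\pi^*v_t)$ with an explicitly bounded potential --- essentially the $p=+\infty$ estimate already written in that proof, with no need to extract the $(1,1)$-part. What the paper's route buys is brevity and independence from the metric construction (it needs only the topological invariance of $V^2_{hyp}$ and the openness of the Kähler cone); what your route buys is self-containedness within the paper's own deformation machinery and an explicit $\widetilde{d}$-bounded potential for the deformed Kähler metric $\omega^{1,1}_t$, rather than a pure existence statement. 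The one point you rightly flag --- transferring boundedness of $\eta$ from $\widetilde{\omega}_0$ to $\widetilde{\omega}^{1,1}_t$ --- is unproblematic: the two metrics are comparable on the compact underlying manifold, and the comparison constants persist under pullback, exactly as in the paper's proof of Theorem \ref{62}.
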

\begin{proof}
    Since $X_0$ is Kähler, the fibers $X_t$ are also Kähler for $t$ close enough to $0$ by Theorem \ref{45} and $h^{2,0}(X_t)=h^{2,0}(X_0)=0$, which in turns implies that $h^{0,2}(X_t)=h^{0,2}(X_0)=0$ thanks to the Hodge symmetry. In particular, it follows from the Hodge decomposition that : $H^{1,1}(X_t,\R)\simeq H^{1,1}(X_0,\R)=H^2_{DR}(X,\R)$, where $X$ is the underlaying differentiable manifold. Which means that $\mathcal{K}_{X_t}$ is open in $H^2_{DR}(X,\R)$, and since $\mathcal{K}_{X_0}\cap V^2_{hyp}(X_0)\neq\emptyset$, we also have $\mathcal{K}_{X_t}\cap V^2_{hyp}(X_t)\neq\emptyset$ for $t$ close enough to $0$ thanks to Remark \ref{71}. Hence the fibers $X_t$ are also Kähler hyperbolic.
\end{proof}
Examples of Kähler hyperbolic manifolds that satisfy the condition $h^{2,0}(X)=0$ are the \textit{fake projective planes} (or \textbf{Mumford surfaces}), which are compact complex surfaces $X$ with the same cohomology as $\CP^2$ (see \cite{mumford1979algebraic}). B. Klingler and S.K. Yeung showed that these are arithmetic quotients of the hyperbolic ball $\B^2_\C$ (\cite{klingler2003rigidite}, \cite{yeung8integrality}), hence they are Kähler hyperbolic and $h^{2,0}(X)=0$. One can get a family of such examples in the following way : Let $X=S\times S$, where $S$ is a fake projective plane (then again $X$ is Kähler hyperbolic and $h^{2,0}(X)=0$, embed $X$ in some $\CP^N$ and consider $X_t$ to be a family of hyperplane sections of $X$. Then $\{X_t\}$ is a family of Kähler hyperbolic manifolds and $h^{2,0}(X_t)=0$ by the Lefschetz hyperplane theorem. These examples and Theorem \ref{72} were kindly communicated to the author by B. Claudon.

A consequence of Corollary \ref{9} is the deformation openness of the SKH property :
\begin{cor}
Let $\sigma:\mathscr{X}\longrightarrow B$ be a holomorphic family of compact complex manifolds $X_t=\sigma^{-1}(t)$, with $t\in B$. Fix an arbitrary reference point $0\in B$. If $X_0$ is SKH, then the nearby fibers are also SKH.
\end{cor}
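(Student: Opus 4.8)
The plan is to reduce the statement to Corollary \ref{9} together with the topological invariance of the fundamental group under deformation. Recall first that $X_0$ being SKH means precisely that it carries a Kähler metric whose pullback to $\widetilde{X}_0$ is $d$-exact (Kähler $\widetilde{d}$-exactness) and that $\pi_1(X_0)$ is Gromov hyperbolic. By the Chen--Yang result recalled in Proposition \ref{11}, these two properties together force $X_0$ to be Kähler hyperbolic in Gromov's original sense, so the hypothesis of Corollary \ref{9} is met.

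I would then apply Corollary \ref{9} to conclude that the nearby fibers $X_t$ are $L^2$-Kähler hyperbolic. Concretely, each such $X_t$ carries a Kähler metric $\omega_t^{1,1}$ --- the one produced in the proofs of Theorems \ref{62} and \ref{58} --- whose pullback to $\widetilde{X}_t$ admits a $d$-potential with the $(F_2)$ property, and by Remark \ref{49}(2) this already yields that $X_t$ is Kähler $\widetilde{d}$-exact; note that the fibers $X_t$ are Kähler for $t$ near $0$ by Kodaira's Theorem \ref{45}, so the SKH notion makes sense for them. For the group-theoretic half, Ehresmann's theorem makes the family locally $\mathscr{C}^\infty$-trivial, so $X_t$ is diffeomorphic to $X_0$ and hence $\pi_1(X_t)\simeq\pi_1(X_0)$; choosing a finite generating set of $\pi_1(X_0)$ and transporting it through this isomorphism gives an isometry of the corresponding Cayley graphs, so Gromov hyperbolicity of $\pi_1(X_0)$ passes to $\pi_1(X_t)$ (see Definition \ref{46} and Remark \ref{44}). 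Combining the two halves, for all $t$ sufficiently close to $0$ the fiber $X_t$ is a compact Kähler manifold which is Kähler $\widetilde{d}$-exact and has Gromov hyperbolic fundamental group, i.e. $X_t$ is SKH.

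I do not expect any genuinely new difficulty in this argument: the analytic substance is entirely contained in Corollary \ref{9} and, through it, in Theorems \ref{58} and \ref{62} with their use of the SKT spectral gap and the strong Bott--Chern Kodaira decomposition. The only subtle point worth flagging is that Corollary \ref{9} delivers the $L^2$- rather than the $L^\infty$-version of boundedness for the $d$-potential; but $L^2$-Kähler hyperbolicity already entails Kähler $\widetilde{d}$-exactness, which is exactly what the definition of SKH requires, so nothing is lost in the reduction --- and the honest boundedness of the potential on $X_t$ can then be recovered a posteriori from the Gromov hyperbolicity of $\pi_1(X_t)$ via the same Chen--Yang result used in the first step, so that $X_t$ is in fact Kähler hyperbolic as well.
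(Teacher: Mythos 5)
Your proposal is correct and follows essentially the same route as the paper: use Proposition \ref{11} to pass from SKH to Kähler hyperbolicity of $X_0$, apply Corollary \ref{9} to get $L^2$-Kähler hyperbolicity (hence Kähler $\widetilde{d}$-exactness via Remark \ref{49}) of the nearby fibers, and invoke the diffeomorphism invariance of $\pi_1$ to transport Gromov hyperbolicity. Your closing observation that the fibers are then in fact Kähler hyperbolic (by feeding the SKH conclusion back through Proposition \ref{11}) is a valid bonus not stated in the paper's proof, but nothing in your argument diverges from it in substance.
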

\begin{proof}
    Since $X_0$ is SKH, then $X_0$ is Kähler hyperbolic. Corollary \ref{9} implies that the fibers $X_t$ are $L^2$-Kähler hyperbolic for $t$ sufficiently close to $0$. In particular, these fibers $X_t$ are Kähler $\widetilde{d}$-exact by Remark \ref{49}. Moreover, since $\pi_1(X_0)$ is Gromov hyperbolic by assumption, $\pi_1(X_t)$ is also Gromov hyperbolic by Remark \ref{51}. Thus, the fibers $X_t$ are SKH.
\end{proof}
We hope to find a way to solve the $\bar\partial$-problem (\ref{41}) and its conjugate problem (\ref{60}) with $L^\infty$-estimates. This will allow us to prove the following :
\begin{conj}
    Let $X$ be a compact Kähler manifold. If $X$ admits a Kähler metric $\omega$ which is SKT hyperbolic, then $X$ is Kähler hyperbolic.
\end{conj}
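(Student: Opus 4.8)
The plan is to re-run, essentially \emph{verbatim}, the proof of Theorem \ref{58}, upgrading the single analytic input from an $L^2$- to an $L^\infty$-statement. One starts again from a Kähler SKT hyperbolic metric $\omega$, so that $\widetilde\omega=\partial\alpha+\bar\partial\beta$ with $\alpha,\beta$ \emph{bounded} $(0,1)$- and $(1,0)$-forms on $\widetilde X$, and decomposes $\alpha=\sum_{\gamma\in\pi_1(X)}\alpha_\gamma$, $\beta=\sum_{\gamma\in\pi_1(X)}\beta_\gamma$ with $\alpha_\gamma,\beta_\gamma$ supported in $\gamma\overline D$, exactly as in the proof of Theorem \ref{58}. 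For each $\gamma$ one takes the minimal-norm solutions $\alpha'_\gamma,\beta'_\gamma$ of the $\bar\partial$-problems (\ref{41}) and (\ref{60}) with data $v=\alpha_\gamma$, resp.\ $v=\beta_\gamma$; by Remark \ref{59} they remain supported in $\gamma\overline D$. The place where the argument must be strengthened is the estimate: one wants
\[
\lVert\alpha'_\gamma\rVert_{L^\infty(\widetilde X)}\le C\,\lVert\alpha_\gamma\rVert_{L^\infty(\widetilde X)}\le C\,\lVert\alpha\rVert_{L^\infty(\widetilde X)},\qquad \lVert\beta'_\gamma\rVert_{L^\infty(\widetilde X)}\le C\,\lVert\beta\rVert_{L^\infty(\widetilde X)},
\]
with $C>0$ independent of $\gamma$ — which is automatic by $\pi_1(X)$-equivariance once it holds for a single translate.

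Granting this, the rest is the bookkeeping already carried out in Theorem \ref{58}. The translates $\gamma\overline D$ have pairwise disjoint interiors and boundaries of measure zero, so at almost every point of $\widetilde X$ only one term of $\alpha'=\sum_\gamma\alpha'_\gamma$ (resp.\ $\beta'=\sum_\gamma\beta'_\gamma$) is nonzero; hence $\alpha',\beta'$ are bounded on $\widetilde X$ with $\lVert\alpha'\rVert_{L^\infty(\widetilde X)}\le C\lVert\alpha\rVert_{L^\infty(\widetilde X)}$ and likewise for $\beta'$. As in (\ref{64}) one has $\partial\alpha'=\partial\alpha$, $\bar\partial\alpha'=0$, $\partial\beta'=0$, $\bar\partial\beta'=\bar\partial\beta$, so $\theta:=\alpha'+\beta'$ is a \emph{bounded} $1$-form with $d\theta=\partial\alpha'+\bar\partial\beta'=\widetilde\omega$. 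Thus $\widetilde\omega$ has a bounded $d$-potential, i.e.\ $\omega$ is $\widetilde d$-bounded and $X$ is Kähler hyperbolic (this is the $(F_\infty)$ case, cf.\ Remark \ref{49}); if a smooth bounded potential is desired, one regularizes $\theta$ by the heat-equation method recalled in Remark \ref{26}.

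The crux — flagged as open by the authors — is the uniform $L^\infty$-estimate for (\ref{41}) and (\ref{60}), and I expect it to be the main obstacle. It cannot be extracted for free from the $L^2$-estimate: the minimal solution is given by the zeroth-order operator $u=\big(\Delta''+(\Delta'')^2\big)\Delta_{BC}^{-1}v$ (see (\ref{52})–(\ref{55})), and zeroth-order operators of this Riesz-transform type are not bounded on $L^\infty$ in general. Two routes look promising. First, a local elliptic bootstrap: since $v=\alpha_\gamma$ is, up to the deck action, supported in a fixed compact region and $\widetilde X$ has bounded geometry (being the universal cover of a compact manifold), one feeds the $L^2$-bound on $u$ from (\ref{57}) into interior $L^p$/Schauder estimates for $\Delta_{BC}u=\bar\partial^*\bar\partial v+\bar\partial^*\partial\partial^*\bar\partial v$ on a slightly enlarged translate, and concludes by Sobolev embedding, the constants being uniform in $\gamma$ by equivariance; the cost is that this forces the data to be taken smooth with uniformly bounded derivatives, so one must first replace $\alpha,\beta$ by bounded \emph{smooth} forms with bounded covariant derivatives, keeping $\widetilde\omega=\partial\alpha+\bar\partial\beta$ (a mollification or partition-of-unity regularization), and this is the genuinely delicate point, the $\partial\bar\partial$-lemma being unavailable on the noncompact $\widetilde X$. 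Second, a heat-kernel route: the spectral gap (\ref{66}), together with (\ref{43}), forces Gaussian/exponential decay of the heat kernel of $\Delta_{BC}$ and of its derivatives on the bounded-geometry manifold $\widetilde X$; writing $\Delta_{BC}^{-1}=\int_0^{+\infty}e^{-t\Delta_{BC}}\,dt$ and estimating the resulting kernels in $L^1$ would give the $L^\infty$-bound by convolution, the compact support of $v$ keeping the integrals convergent. Either way, this single input is the whole difficulty; everything downstream is the already-established machinery of Theorem \ref{58}.
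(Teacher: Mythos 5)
The statement you are addressing is stated in the paper as a \emph{conjecture}, not a theorem: the paper offers no proof of it, and explicitly identifies the missing ingredient as the solvability of the $\bar\partial$-problems (\ref{41}) and (\ref{60}) with $L^\infty$-estimates. Your proposal reproduces exactly the paper's intended reduction --- rerun the proof of Theorem \ref{58} with the $L^2$-estimate (\ref{57}) replaced by a uniform $L^\infty$-estimate on the minimal solutions $\alpha'_\gamma,\beta'_\gamma$ --- and then leaves that single estimate unproven, as you yourself acknowledge. So there is a genuine gap, and it is precisely the one the authors flag: without the $L^\infty$-bound, nothing in the argument produces a bounded $d$-potential of $\widetilde\omega$, and the conjecture remains open. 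The downstream bookkeeping (disjointness of the supports $\gamma\overline D$, the identities (\ref{64}), $d\theta=\widetilde\omega$) is correctly transcribed from Theorem \ref{58}, but it is not where the difficulty lies.

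Neither of your two suggested routes closes the gap as stated. For the elliptic-bootstrap route, the datum $v=\alpha_\gamma=\chi_{\gamma\overline D}\,\alpha$ is a sharp truncation, so $\bar\partial v$ carries a surface-layer distribution on $\partial(\gamma D)$ and the right-hand side $\bar\partial^*\bar\partial v+\bar\partial^*\partial\partial^*\bar\partial v$ is far too singular for interior Schauder or $L^p$ estimates to give an $L^\infty$-bound on $u$ up to $\partial(\gamma D)$ --- which is exactly where the bound is needed, since the translates tile $\widetilde X$. You note that one must first regularize, but do not explain how to do so while keeping the supports essentially disjoint; if the regularized pieces overlap, the sum $\sum_\gamma\alpha'_\gamma$ is controlled only by the maximal overlap number times the individual bounds, and that number must itself be shown finite and uniform. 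For the heat-kernel route, the spectral gap gives exponential decay of $e^{-t\Delta_{BC}}$ in the $L^2\to L^2$ operator norm, but pointwise Gaussian bounds for the heat kernel of the \emph{fourth-order} operator $\Delta_{BC}$, uniform on a noncompact cover, are not available off the shelf, and --- as you correctly observe --- the solution operator $\bigl(\Delta''+(\Delta'')^2\bigr)\Delta_{BC}^{-1}$ is of Riesz-transform type and such operators are not bounded on $L^\infty$ in general. Until one of these programs is actually carried out, what you have is an accurate restatement of the strategy behind Theorem \ref{58} together with a correct diagnosis of why it currently stops at $L^2$ (hence at $L^2$-Kähler hyperbolicity, Corollary \ref{9}) rather than at Kähler hyperbolicity.
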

If this is proved to be true, we would end up showing that the Kähler hyperbolicity property is open under small deformations.

Maybe, a key assumption to prove this conjecture is to assume that the universal cover $\widetilde{X}$ of a Kähler manifold admitting a Kähler SKT hyperbolic metric is \textit{Stein}. In fact, we proved that such manifolds are $L^2$-Kähler hyperbolic (Theorem \ref{58}). In particular, they are Kähler $\widetilde{d}$-exact by Remark \ref{49}. Hence, they have generecally large (so infinite) fundamental groups by Corollary \ref{13}. These manifolds are also projective Theorem 2.8 in \cite{li2019kahler}.

The \textbf{Shafarevich uniformization conjecture} predicts that the universal cover should be Stein. To show that the Shafarevich conjecture holds true for these manifolds, or at least ensure the existence of non-constant holomorphic maps on the universal cover, we do not have a general method except the following \cite{eys} :
\begin{enumerate}
    \item If the fundamental group does not satisfy \textbf{Kazhdan's property (T)} (\cite{kazhdan1967connection}), N. Mok proved that the universal cover carries a non constant holomorphic function to some Hilbert space, equivariant for some aﬃne isometric representation of the fundamental group \cite{mok1995harmonic}. Consequently, the universal cover supports a non-constant holomorphic function with bounded diﬀerential, and thus, of at most linear growth (see \cite{delzant2005cuts}, p.6). But there are some Kähler hyperbolic manifolds that are known to satisfy this property (compact quotients of irreducible bounded symmetric domains of rank $2$).
    \item Find a finite-dimensional linear representation of the fundamental group of infinite image \cite{eyssidieux2012linear}. We do not know of any fundamental group of a smooth complex algebraic variety that we can prove is infinite, and that does not have such linear representations.
\end{enumerate}

\bibliographystyle{alpha}
\newcommand{\etalchar}[1]{$^{#1}$}

\end{document}